\providecommand{\U}[1]{\protect\rule{.1in}{.1in}}
\newenvironment{proof}[1][Proof]{\noindent\textbf{#1.} }{\ \rule{0.5em}{0.5em}}
\def\arraystretch{1.3}
\journalname{}
\newtheorem{example}{Example}[section]
\newtheorem{remark}{Remark}[section]
\newtheorem{proposition}{Proposition}[section]
\begin{document}
	
	\LRCornerWallPaper{0.2}{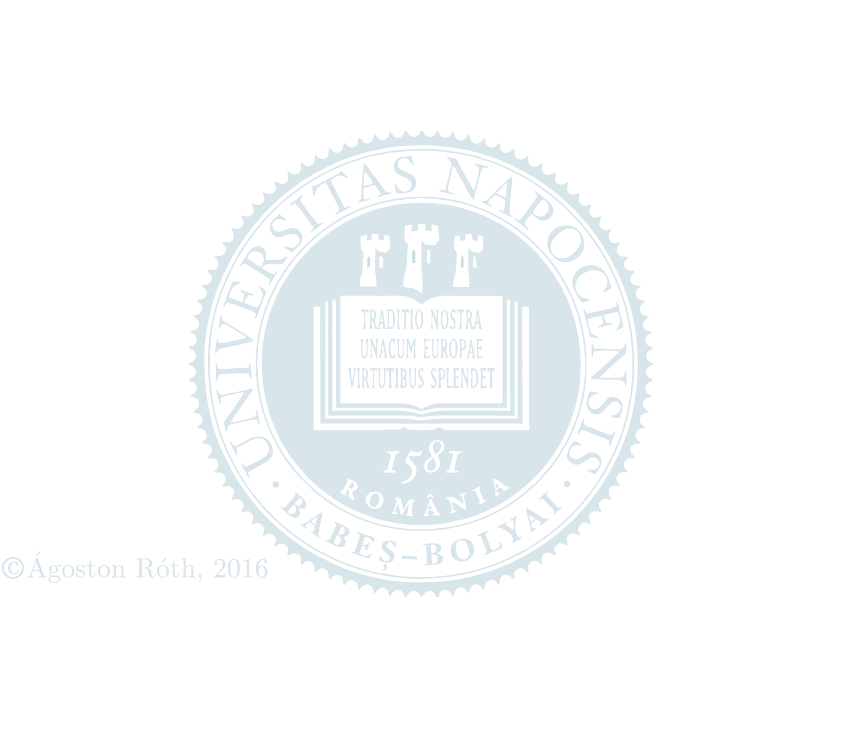}

\hypersetup{
	colorlinks = true, 
	citecolor = blue, 
	linkcolor = red, 
	urlcolor = blue}

\renewcommand*{\thefootnote}{\fnsymbol{footnote}}

\title{Nielson-type transfinite triangular interpolants\\by means of quadratic energy functional optimizations\thanks{In memoriam: Gerald Farin (March 20, 1953 -- January 14, 2016).}}

\author{\'{A}goston R\'{o}th}

\institute{
	\'A. R\'oth \at Department of Mathematics and Computer Science, Babe\c{s}--Bolyai University, RO--400084 Cluj-Napoca, Romania \\
	Tel.: +40-264-405300\\
	Fax:  +40-264-591906\\
	\email{agoston\_roth@yahoo.com}
}

\date{Submitted to arXiv on April 6, 2016}

\maketitle
\begin{abstract}
	We generalize the transfinite triangular interpolant of \citep{Nielson1987} in order to generate visually smooth (not necessarily polynomial) local interpolating quasi-optimal triangular spline surfaces. Given as input a triangular mesh stored in a half-edge data structure, at first we produce a local interpolating network of curves by optimizing quadratic energy functionals described along the arcs as weighted combinations of squared length variations of first and higher order derivatives, then by optimizing weighted combinations of first and higher order quadratic thin-plate-spline-like energies we locally interpolate each curvilinear face of the previous curve network with triangular patches that are usually only $C^0$ continuous along their common boundaries. In a following step, these local interpolating optimal triangular surface patches are used to construct quasi-optimal continuous vector fields of averaged unit normals along the joints, and finally we extend the $G^1$ continuous transfinite triangular interpolation scheme of \citep{Nielson1987} by imposing further optimality constraints concerning the isoparametric lines of those groups of three side-vertex interpolants that have to be convexly blended in order to generate the final visually smooth local interpolating quasi-optimal triangular spline surface. While we describe the problem in a general context, we present examples in special polynomial, trigonometric, hyperbolic and algebraic-trigonometric vector spaces of functions that may be useful both in computer-aided geometric design and in computer graphics.
\end{abstract}
\subclass{65D17 \and 65D18 \and 68U05 \and 68U07}

\keywords{
	 Triangular patches and spline surfaces \and Univariate normalized B-basis functions \and Constrained trivariate basis functions \and Quadratic energy functionals \and Nielson-type transfinite triangular interpolants \and Geometric continuity
	}

\renewcommand*{\thefootnote}{\arabic{footnote}}

\section{Introduction}

Spline-like surfaces that consist of geometrically continuous or visually smooth triangular interpolants (i.e., interpolating triangular surface patches that have continuous tangent plane along their common boundary curves) are important in approximation theory, in computer-aided geometric design and in computer graphics as well. Without providing an exhaustive survey, we cite some related publications:
\begin{itemize}[noitemsep]
	\item
	in \citep{Nielson1987} each surface segment is defined over a triangle such that it matches a group of transfinite data formed by three boundary curves and associated vector fields of normals;
	
	\item
	in \citep{Loop1994} the author proposes a $G^1$ triangular spline surface of arbitrary topological type that consists of sextic triangular B\'ezier patches;
	
	\item
	in \citep{WaltonMeek1996} the authors fit a given triangular network of cubic B\'ezier curves with a composite $G^1$ continuous surface that consists of rational polynomial triangular patches, by constructing at first tangent ribbons along each boundary curve then by defining surface patches with cross-boundary directional derivatives that lie in common planes along the shared joints;
	
	\item
	in \citep{VlachosPetersBoydMitchell2001} the authors use the point-normal interpolation method of \citep{Pieper1987} in the context of so-called triangular PN patches that are used to improve the visual quality of existing triangle-based art in real-time entertainment (such as computer games), by replacing flat triangles with cubic B\'ezier patches that ensure a quadratic normal vector variation for Gouraud shading;
	
	\item
	in \citep{TongKim2009} the authors present a polynomial method for the approximation of implicit surfaces by $G^1$ triangular spline surfaces that is capable of interpolating positions, normals and normal curvatures at the vertices of a triangular mesh which is a piecewise linear homeomorphic approximation of the given implicit surface (the $G^1$ continuity constraints are ensured by solving equality-constrained least squares fitting problems);
	
	\item
	in \citep{FarinHansford2012} the authors also rely on \citep{Pieper1987} in order to build patch boundaries as B\'ezier curves, then they propose special $G^1$ smoothness conditions for rectangular and triangular Gregory patches \citep{Gregory1974, ChiyokuraKimura1984} that can be incorporated  into a surface fitting algorithm, by estimating tangent ribbons along the boundary curves and enforcing $G^1$ continuity across interior boundary curves by means of underdetermined linear systems.
\end{itemize}	
	
Our approach originates in the transfinite triangular interpolant of \citep{Nielson1987} and relies on quadratic energy functional optimizations in order to generate visually smooth quasi-optimal triangular spline surfaces (the similarities and main differences will be detailed in Section \ref{sec:generalized_G1_Nielson_patches}). 

Consider the triangular mesh $\mathcal{M}=\left(  \mathcal{V},\mathcal{F}%
,\mathcal{E}\right)  $ that consists of the unique vertices $\mathcal{V}%
=\left\{  \mathbf{p}_{i}\right\}  _{i=1}^{n_{v}}$ and counterclockwise
oriented triangular faces (i.e., triplets of vertex nodes)
\[
\mathcal{F}=\left\{f_r\right\}_{r=1}^{n_f}=\left\{  \left(  i,j,k\right)  :\left(  i,j\right)  ,\left(
j,k\right)  ,\left(  k,i\right)  \in\mathcal{E}\right\}  ,
\]
where $\mathcal{E}$ denotes the set of oriented (half-)edges. As an example, Fig.\
\ref{fig:mesh_normals_tangents}(\emph{a}) illustrates such a triangular mesh.%

\begin{figure}
[H]
\begin{center}
\includegraphics{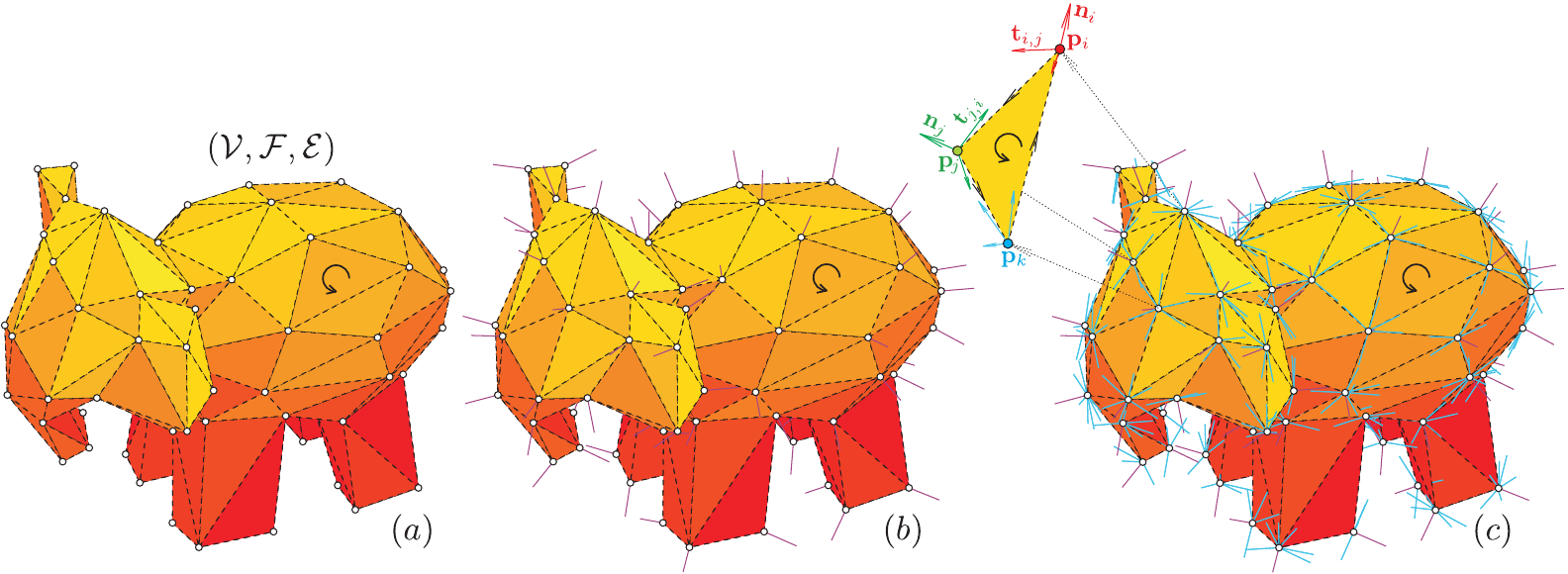}%
\caption{(\emph{a}) A triangular mesh $\left(  \mathcal{V},\mathcal{F}%
,\mathcal{E}\right)  $ consisting of unique vertices and counterclockwise
oriented faces. (\emph{b})--(\emph{c}) Angle-weighted averaged unit normal vectors and approximated
unit tangent vectors translated to the corresponding mesh vertices. ($n_v = 89$, $n_f=174$)}%
\label{fig:mesh_normals_tangents}%
\end{center}
\end{figure}

Based on the connectivity information stored in $\mathcal{F}$ one is able to
determine the angle-weighted \citep{ThurrnerWuthrich1998} averaged unit normal vectors $\mathcal{N}=\left\{
\mathbf{n}_{i}\right\}  _{i=1}^{n_{v}}$ that can be associated with the
corresponding vertices of $\mathcal{V}$ as it is shown in Fig.\
\ref{fig:mesh_normals_tangents}(\emph{b}). Now, consider an arbitrarily selected directed edge $\left(  i,j\right)  \in\mathcal{E}$ and based on
vectors%
\[%
\begin{array}
[c]{cclcccl}%
\mathbf{f}_{i} & = & -\mathbf{n}_{i}, &  & \mathbf{f}_{j} & = & -\mathbf{n}%
_{j},\\
&  &  &  &  &  & \\
\mathbf{b}_{i,j} & = & \dfrac{\left(  \mathbf{p}_{j}-\mathbf{p}_{i}\right)
	\times\mathbf{f}_{i}}{\left\Vert \left(  \mathbf{p}_{j}-\mathbf{p}_{i}\right)
	\times\mathbf{f}_{i}\right\Vert }, &  & \mathbf{b}_{j,i} & = & \dfrac{\left(
	\mathbf{p}_{i}-\mathbf{p}_{j}\right)  \times\mathbf{f}_{j}}{\left\Vert \left(
	\mathbf{p}_{i}-\mathbf{p}_{j}\right)  \times\mathbf{f}_{j}\right\Vert }%
\end{array}
\]
determine the unit tangents $$\mathbf{t}_{i,j}=\mathbf{f}_{i}%
\times\mathbf{b}_{i,j}~~~\text{  and  }~~~ \mathbf{t}_{j,i}=\mathbf{f}_{j}\times \mathbf{b}_{j,i}$$ that can be associated with vertices $\mathbf{p}_{i}$ and
$\mathbf{p}_{j}$, respectively. Fig.\ \ref{fig:mesh_normals_tangents}(\emph{c})
shows all these approximated unit tangent vectors.
Observe that the normal vector $\mathbf{n}_{i}$ is orthogonal to all tangent
vectors emanating from the vertex $\mathbf{p}_{i}$, i.e., all tangent vectors
emanating from a given vertex lie in the same approximated tangent plane. The rest of the manuscript is organized as follows.

	By means of the unique non-negative normalized B-bases of different types of reflection invariant extended Chebyshev vector spaces that also comprise the constants, in Section \ref{sec:proper_univariate_basis_functions} we select proper univariate basis functions in order to ensure more optimal shape preserving properties.
	
	In Section \ref{sec:optimal_arcs} we build a network of optimal curves that locally interpolate the vertices and associated tangent vectors along each edge of the given mesh by means of (not necessarily polynomial) arcs that minimize the weighted combinations of squared length variations of their first and higher oder derivatives (i.e., by means of explicit closed formulas, our objective is to define and locally minimize certain quadratic energy functionals that always have unique global optimum points that indirectly also influence the length, the curvature variation or other higher order intrinsic properties of the arcs of the constructed network).
	
	Section \ref{sec:C0_optimal_patches} gives explicit closed formulas to locally interpolate all curvilinear triangular faces formed by the arcs of the previously constructed curve network by using optimal (not necessarily polynomial) triangular surface patches that are only $C^0$ continuous along their common boundaries and which minimize the weighted sum of first and higher order quadratic thin-plate-spline-like energies (the considered energy functionals will always have unique optimum points that indirectly also influence the surface area, the curvature variation or any higher order intrinsic properties of the constructed patches). These triangular interpolants should be described by means of barycentric coordinate-dependent constrained trivariate basis functions that along the boundaries are compatible with (i.e., degenerate to) the univariate basis functions used for the construction of the arcs of the optimal curve network.
	
	In Section \ref{sec:generalized_G1_Nielson_patches}, at first, we use the connectivity information stored in $\mathcal{F}$ in order to define continuous vector fields of averaged unit normals along the common boundary curves of the previously generated local interpolating optimal $C^0$ triangular patches, then we generalize the convexly blended side-vertex method of \citep{Nielson1987} in order to produce $G^1$ continuous (not necessarily polynomial) quasi-optimal triangular patches that interpolate the previously calculated local interpolating optimal arcs and also match the   vector fields of averaged unit normals of the neighboring $C^0$ patches.
	
	Our final remarks are included in Section \ref{sec:final_remarks}. In order to ease and speed up possible reimplementations we close the manuscript with Appendices \ref{app:univariate_integrals} and \ref{app:double_integrals} that list closed formulas for certain univariate and double integrals, respectively, on which the proposed method relies. The steps of our triangular spline surface modeling tool are briefly outlined in Fig.\ \ref{fig:process}.

\begin{figure}[H]
	\centering
	\includegraphics{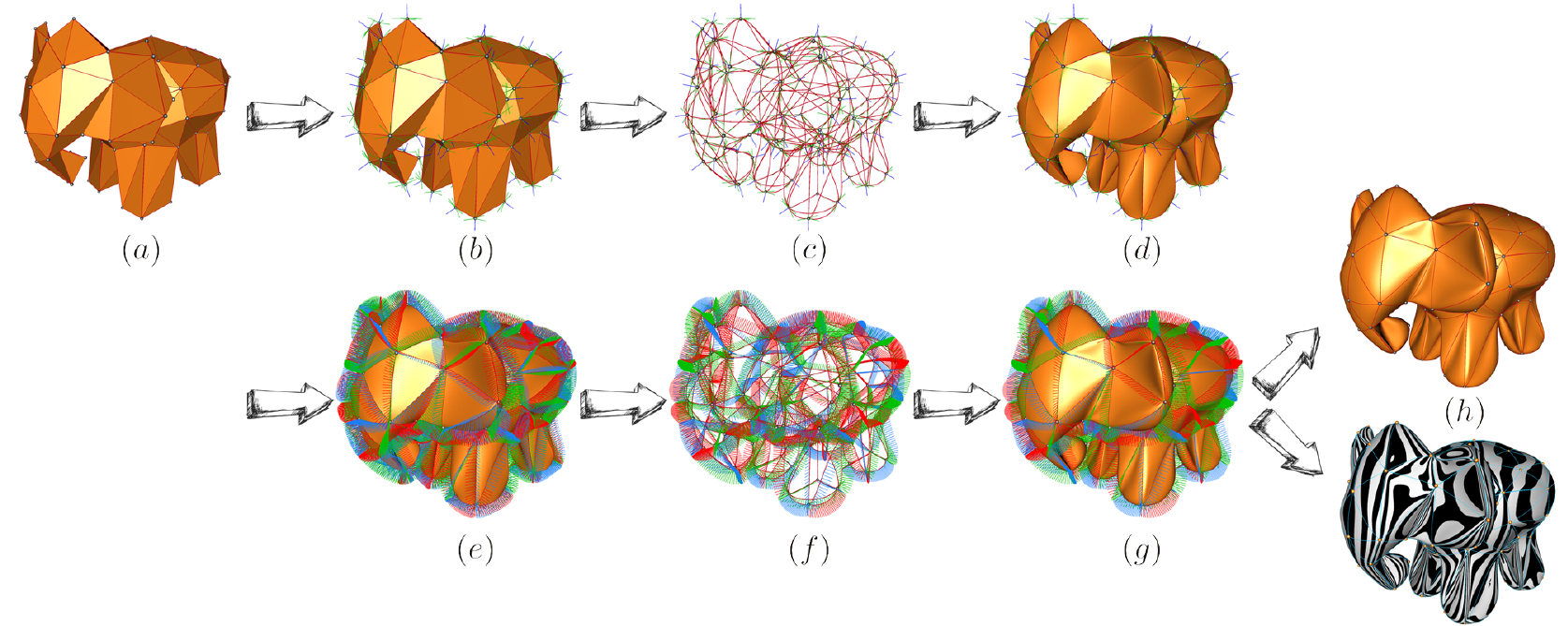}
	\caption{(\textit{a}) A triangular mesh stored in a half-edge data structure. (\textit{b}) Angle-weighted averaged unit normals and approximated unit tangents associated with the vertices. (\textit{c}) Local interpolating piecewise optimal curve network. (\textit{d}) Local interpolating piecewise optimal triangular surface patches that are only $C^0$ continuous along their common boundaries. (\textit{e}) Different vector fields of unit normals along the boundaries. (\textit{f}) Continuous quasi-optimal vector fields of averaged unit normals along the joints. (\textit{g})--(\textit{h}) Material and reflection line based rendering of generalized $G^1$ continuous triangular Nielson-type interpolants with quasi-optimal isoparametric lines.}
	\label{fig:process}
\end{figure}

\vspace{-0.5cm}
\section{Selecting proper basis functions}\label{sec:proper_univariate_basis_functions}
Consider the endpoints of the directed edge $\left(i,j\right)\in\mathcal{E}$ and their associated unit tangent vectors. In order to describe the points and higher order derivatives of a smooth curve as the varying linear combination of the control points $\mathbf{p}_i$, $\mathbf{p}_i+\lambda_{i,j}\mathbf{t}_{i,j}$, $\mathbf{p}_j+\lambda_{j,i}\mathbf{t}_{j,i}$, $\mathbf{p}_j$ (where scaling factors $\lambda_{i,j}$ and $\lambda_{j,i}$ are unknown at the moment), one has to define smooth basis functions that also ensure several shape preserving properties. Let%
\begin{equation}
\mathcal{B}=\left\{  B_{3,0}\left(  x\right)  ,B_{3,1}\left(  x\right)
,B_{3,2}\left(  x\right)  ,B_{3,3}\left(  x\right)  :x\in\left[
0,\beta\right]  \right\},~\beta > 0
\label{eq:system}
\end{equation}
be a system of sufficiently smooth non-negative normalized symmetric basis functions that also ensure endpoint interpolation, i.e.,%
\begin{align}
B_{3,k}&\in C^{\rho}\left(\left[0,\beta\right]\right),&~\rho \geq 1,~k=0,1,2,3,\label{eq:smoothness}\\
B_{3,k}\left(  x\right)   &  \geq0,&~\forall x \in \left[0,\beta\right],~k=0,1,2,3,\label{eq:nonnegativity}
\\
\sum_{k=0}^{3}B_{3,k}\left(  x\right)   &  \equiv1,&~\forall x\in\left[
0,\beta\right] ,\label{eq:partition_of_unity}
\\
B_{3,0}\left(0\right) &= 1,& ~B_{3,1}\left(0\right) = B_{3,2}\left(0\right) = B_{3,3}\left(0\right) = 0,
\\
B_{3,3}\left(\beta\right) &= 1,& ~B_{3,0}\left(\beta\right) = B_{3,1}\left(\beta\right) = B_{3,2}\left(\beta\right) = 0,
\\
B_{3,k}\left(  x\right)   &  =B_{3,3-k}\left(  \beta-x\right)  ,&~\forall
x\in\left[  0,\beta\right]  ,~k=0,1.\label{eq:symmetry}
\end{align}

As we will see in the forthcoming sections, conditions (\ref{eq:smoothness})--(\ref{eq:symmetry}) form a part of the minimal requirements of the proposed curve and surface modeling tools. Naturally, there are infinitely many bases that fulfill conditions (\ref{eq:smoothness})--(\ref{eq:symmetry}), however, in order to ensure additional ideal shape preserving properties, we advise to define the system (\ref{eq:system}) by means of unique normalized B-basis functions of classes of reflection invariant (mixed) extended Chebyshev (EC) spaces as it is illustrated by several examples at the end of the current section. The following parts recall the notion of EC spaces and motivate the application of non-negative normalized B-basis functions.

Let $\eta\geq 3$ be a fixed integer and consider the EC system%
\begin{equation}
\mathcal{S}_{\eta}^{\beta}=\left\{  S
_{\eta,r}\left(  x\right)  :x\in\left[  0,\beta\right]  \right\}  _{r=0}%
^{\eta},~S_{\eta,0}	\equiv 1,\label{eq:ordinary_basis}%
\end{equation}
of basis functions in $C^{\eta}\left(  \left[  0,\beta\right]  \right)  $,
i.e., by definition \citep{KarlinStudden1966}, for any integer $0\leq r\leq \eta$, any strictly increasing
sequence of knot values $0\leq x_{0}<x_{1}<\ldots<x_{r}\leq\beta$, any
positive integers (called multiplicities) $\left\{  m_{k}\right\}  _{k=0}^{r}$
such that $\sum_{k=0}^{r}m_{k}=\eta+1$, and any real numbers $\left\{
y_{k,\ell}\right\}  _{k=0,~\ell=0}^{r,~m_{k}-1}$ there always exists a
unique function
\begin{equation}
S:=\sum_{r=0}^{\eta}\sigma_{\eta,r}S_{\eta,r}\in\mathbb{S}_{\eta}^{\beta}:=
\operatorname{span}\mathcal{S}_{\eta}^{\beta},~\sigma_{\eta,r}\in%
\mathbb{R}
,~r=0,1,\ldots,\eta \label{eq:unique_solution}%
\end{equation}
that satisfies the conditions of the Hermite interpolation problem%
\begin{equation}
\left.\frac{\mathrm{d^{\ell}}}{\mathrm{d}x^{\ell}}S\left(x\right)\right|_{x=x_k}=:S^{\left(  \ell\right)  }\left(  x_{k}\right)  =y_{k,\ell},~\ell
=0,1,\ldots,m_{k}-1,~k=0,1,\ldots,r. \label{eq:Hermite_interpolation_problem}%
\end{equation}
In what follows, we assume that the sign-regular determinant of the coefficient
matrix of the linear system (\ref{eq:Hermite_interpolation_problem}) of
equations is strictly positive for any permissible parameter settings introduced above.
Under these circumstances, the vector space $\mathbb{S}_{\eta}^{\beta}$ of functions is
called an EC space of dimension $\eta+1$. In terms of zeros, this
definition means that any non-zero element of $\mathbb{S}_{\eta}^{\beta}$
vanishes at most $\eta$ times in the interval $\left[0,\beta\right]  $. Such spaces and their corresponding spline counterparts have been widely studied, consider e.g.\ articles \citep{PottmannWagner1994,Mazure1999, Mazure2001,MainarPenaSanchez2001,LuWangYang2002,CarnicerMainarPena2004,MainarPena2004,CarnicerMainarPena2007,MainarPena2010,Roth2015} and many other references therein.

Using \citep[Theorem
5.1]{CarnicerPena1995} and \citep{CarnicerMainarPena2004}, it follows that the vector space $\mathbb{S}%
_{\eta}^{\beta}$ also has a strictly totally positive basis for appropriately fixed values of the parameter $\beta$, i.e., a basis
such that all minors of all its collocation matrices are strictly positive. Since the constant function $1\equiv S_{\eta,0}\in\mathbb{S}_{\eta}%
^{\beta}$, the aforementioned strictly positive basis is normalizable,
therefore the vector space $\mathbb{S}_{\eta}^{\beta}$ also has a unique non-negative normalized B-basis%
\begin{equation}
\widetilde {\mathcal{B}}_{\eta}^{\beta}=\left\{  \widetilde{B}_{\eta,r}\left(  x\right)  :x\in\left[
0,\beta\right]  \right\}  _{r=0}^{\eta} \label{eq:B-basis}%
\end{equation}
that besides the identity%
\begin{equation}
\sum_{r=0}^{\eta}\widetilde{B}_{\eta,r}\left(  x\right)  \equiv 1,~\forall x\in\left[
0,\beta\right]  \label{eq:partition_of_unity_B_basis}%
\end{equation}
also fulfills the properties%
\begin{align}
\widetilde{B}_{\eta,0}\left( 0 \right)   &  =\widetilde{B}_{\eta,\eta}\left(  \beta\right)
=1,\label{eq:endpoint_interpolation}\\
\widetilde{B}_{\eta,r}^{\left(  j\right)  }\left(  0\right)   &  =0,~j=0,\ldots
,r-1,~\widetilde{B}_{\eta,r}^{\left(  r\right)  }\left(  0\right)
>0,\label{eq:Hermite_conditions_0}\\
\widetilde{B}_{\eta,r}^{\left(  j\right)  }\left(  \beta\right)   &  =0,~j=0,1,\ldots
,\eta-1-r,~\left(  -1\right)  ^{\eta-r}\widetilde{B}_{\eta,r}^{\left(  \eta-r\right)  }\left(
\beta\right)  >0 \label{eq:Hermite_conditions_alpha}%
\end{align}
conform \citep[Theorem 5.1]{CarnicerPena1995} and \citep[Equation (3.6)]%
{Mazure1999}. Among such EC spaces there are ones (see e.g.\ \citep{MainarPena2010}) that can be formed by all solutions of those linear homogeneous differential equations of order $\eta + 1$ the coefficients of which are constants and the characteristic polynomial of which is an either even or odd function that also admits $0$ as one of its (presumably higher order) zeros. Under these
conditions, $1\in\mathbb{S}_{\eta}^{\beta}$, moreover the space $\mathbb{S}%
_{\eta}^{\beta}$ is also invariant under reflections and consequently under
translations as well, i.e., for any function $S\in\mathbb{S}_{\eta}^{\beta}$ and
fixed scalar $\tau\in%
\mathbb{R}
$ the functions $P_{\tau}\left(  x\right)  :=S\left(  \tau-x\right)  $ and
$Q_{\tau}\left(  x\right)  :=S\left(  x-\tau\right)  $ also belong to
$\mathbb{S}_{\eta}^{\beta}$. Therefore, the unique normalized B-basis of the members of this class of EC spaces also fulfills the symmetry
\begin{equation}
\label{eq:B-symmetry}
\widetilde{B}_{\eta,r}\left(  x\right)  =\widetilde{B}_{\eta,\eta-r}\left(  \beta-x\right),~\forall x \in \left[0, \beta\right]  ,~r=0,1,\ldots
,\left\lfloor \frac{\eta}{2}\right\rfloor.
\end{equation}

Compared with the traditionally used cubic or higher degree Hermite basis functions that would easily fulfill the required endpoint interpolation conditions, the unique normalized B-bases of such vector spaces ensure more optimal shape preserving properties (like closure for the affine transformations of the control points, convex hull, variation diminishing, monotonicity preserving, hodograph and length diminishing, symmetry with respect to reversing the order of control points), important evaluation or subdivision algorithms and useful shape (or tension) parameters.

In cases $\eta = 3$ and $\eta > 3$, the required basis functions of the system (\ref{eq:system}) can be obtained from the normalized B-basis (\ref{eq:B-basis}) either by direct index correspondence or by creating four linearly independent linear combinations of normalized B-basis functions such that the obtained expressions do not violate conditions (\ref{eq:nonnegativity})--(\ref{eq:symmetry}), respectively.

\begin{example}[Cubic and higher degree Bernstein polynomials]\label{exmp:Bernstein}
An immediate choice of the required basis functions would be the cubic case of the Bernstein polynomials
\[
\widetilde{\mathcal{B}}^1_{\eta} = 
\left\{
\widetilde{B}_{\eta,r}\left(x\right)
=
\binom{\eta}{r}x^r \left(1-x\right)^{\eta-r}:x\in\left[0,1\right]
\right\}_{r=0}^{\eta},~\eta \geq 3
\]
that form the non-negative normalized B-basis \citep{Carnicer1993} of the EC space of polynomials of degree at most $\eta$, i.e., $B_{3,k}\left(x\right):=\widetilde{B}_{3,k}\left(x\right),~k=0,1,2,3$. However, in order to ensure higher order non-vanishing derivatives, one could define the required basis functions for example as the linear combinations
\[
B_{3,0}\left(x\right) := \widetilde{B}_{4,0}\left(x\right),~
B_{3,1}\left(x\right) := \widetilde{B}_{4,1}\left(x\right) + \frac{1}{2}\widetilde{B}_{4,2}\left(x\right),~
B_{3,2}\left(x\right) :=  \frac{1}{2}\widetilde{B}_{4,2}\left(x\right)+\widetilde{B}_{4,3}\left(x\right),~
B_{3,3}\left(x\right) := \widetilde{B}_{4,4}\left(x\right)
\]
that also fulfill the conditions (\ref{eq:smoothness})--(\ref{eq:symmetry}).
\end{example}

\begin{example}[Second and higher order trigonometric normalized B-basis functions]\label{exmp:Sanchez}
Let $\beta\in\left(  0,\pi\right)  $ be a fixed shape parameter and let $\eta= 2\mu \geq 4$. 
The non-negative normalized B-basis of the EC space
\begin{equation}
\mathbb{S}_{2\mu}^{\beta}=\left\{S_{2\mu,0}\left(  u\right)
\equiv 1,~\left\{  S_{2\mu,2r-1}\left(
u\right)  =\sin\left(  r x\right)  ,~S_{2\mu,2r}\left(  x\right)
=\cos\left(  r x\right)  \right\}  _{r=1}^{\mu}:x\in\left[  0,\beta\right] \right\}
\label{eq:trigonometric_polynomials_n}%
\end{equation}
of trigonometric polynomials of order at most $\mu$ (degree $\eta = 2\mu$) provided by \citep{Sanchez1998} can linearly be reparametrized into the form%
\begin{equation}
\widetilde{\mathcal{B}}_{2\mu}^{\beta}=\left\{  \widetilde{B}_{2\mu,r}\left(  x\right)  =c_{2\mu,r}%
^{\beta}\sin^{2\mu-r}\left(  \frac{\beta-x}{2}\right)  \sin^{r}\left(  \frac
{x}{2}\right)  :x\in\left[  0,\beta\right]  \right\}  _{r=0}^{2\mu},~\mu \geq 2
\label{eq:trigonometric_B-basis-n}%
\end{equation}
where%
\begin{equation}
c_{2\mu,r}^{\beta}=c_{2\mu,2\mu-r}^{\beta}=\frac{1}{\sin^{2\mu}\left(  \frac{\beta}%
	{2}\right)  }\sum_{\ell=0}^{\left\lfloor \frac{r}{2}\right\rfloor }\binom{\mu}%
{r-\ell}\binom{r-\ell}{\ell}\left(  2\cos\left(  \frac{\beta}{2}\right)  \right)
^{r-2\ell},~r=0,1,\ldots,\mu \label{eq:trigonometric_normalizing_constants_n}%
\end{equation}
are symmetric normalizing coefficients. Similarly to Example \ref{exmp:Bernstein}, if $\mu = 2$ (i.e., $\eta = 4$), then the functions of the system (\ref{eq:system}) can be defined for example as follows
\[
B_{3,0}\left(x\right) := \widetilde{B}_{4,0}\left(x\right),~
B_{3,1}\left(x\right) := \widetilde{B}_{4,1}\left(x\right) + \frac{1}{2}\widetilde{B}_{4,2}\left(x\right),~
B_{3,2}\left(x\right) :=  \frac{1}{2}\widetilde{B}_{4,2}\left(x\right)+\widetilde{B}_{4,3}\left(x\right),~
B_{3,3}\left(x\right) := \widetilde{B}_{4,4}\left(x\right).
\]
\end{example}

\begin{example}[Second and higher oder hyperbolic normalized B-basis functions]\label{exmp:hyperbolic}
	Now, let $\beta>0$ and $\eta = 2\mu \geq 4$ be fixed parameters. Using hyperbolic sine and cosine
	functions in expressions (\ref{eq:trigonometric_polynomials_n}%
	)--(\ref{eq:trigonometric_normalizing_constants_n}) instead of the
	trigonometric ones, we obtain the vector space of hyperbolic polynomials of
	order at most $\mu$ (or degree $\eta = 2\mu$) the unique non-negative normalized B-basis of which was
	introduced in \citep{ShenWang2005}. If $\mu = 2$ (i.e., $\eta = 4$), then the basis functions (\ref{eq:system}) can be defined as
	in Example \ref{exmp:Sanchez}.
\end{example}

\begin{example}[First order algebraic-trigonometric normalized B-basis functions]
	\label{exmp:algebraic_trigonometric}The non-negative normalized B-basis%
	\begin{align*}
	\widetilde{\mathcal{B}}_{3}^{\beta}=  &  ~\left\{  \widetilde{\mathcal{B}}_{3,0}\left(  x\right)
	=\widetilde{\mathcal{B}}_{3,3}\left(  \beta-x\right)  ,~\widetilde{\mathcal{B}}_{3,1}\left(  x\right)  =\widetilde{\mathcal{B}}_{3,2}\left(
	\beta-x\right)  ,\right. \\
	&  ~~\left.  \widetilde{\mathcal{B}}_{3,2}\left(  x\right)  =\frac{\left(  \beta-x+\sin\left(  \beta
		-x\right)  +\sin\left(  x\right)  -\sin\left(  \beta\right)  +x\cos\left(
		\beta\right)  -\beta\cos\left(  x\right)  \right)  \cdot \sin\left(  \beta\right)
	}{\left(  \beta-\sin\left(  \beta\right)  \right)  \left(  2\sin\left(
	\beta\right)  -\beta-\beta\cos\left(  \beta\right)  \right)  }\right., \\
	&  ~~\left.  \widetilde{\mathcal{B}}_{3,3}\left(  x\right)  =\frac{x-\sin\left(  x\right)  }{\beta
		-\sin\left(  \beta\right)  } :x\in\left[  0,\beta\right]  \right\}  ,~\beta
	\in\left(  0,2\pi\right)
	\end{align*}
	of the mixed EC space %
	$
	\mathbb{S}_{3}^{\beta}    =\operatorname{span} \{  S_{3,0}\left(  x\right)  =1,~S_{3,1}\left(  x\right)  =x,~S_{3,2}\left(  x\right)  =\sin\left(x\right)%
	,S_{3,3}\left(  x\right)  =\cos\left(  x\right)   :x\in\left[  0,\beta\right] \}
	\rangle
	$
	of algebraic-trigonometric functions can be constructed by using either
	the differential equation based iterative integral representation published in
	\citep{MainarPena2010} and references therein or the determinant based formulas
	of \citep[Theorem 3.4]{Mazure1999}. The critical length %
	$2\pi$ was determined in \citep[Section 5]{CarnicerMainarPena2004}. As in the cubic case of the polynomial Example \ref{exmp:Bernstein}, the required basis functions (\ref{eq:system}) can be defined by direct index correspondence, i.e., $B_{3,k}\left(x\right) := \widetilde{B}_k\left(x\right),~k=0,1,2,3$. 
\end{example}

\begin{remark}
	Observe that the polynomial, trigonometric, hyperbolic and algebraic-trigonometric reflection invariant EC spaces detailed in Examples \ref{exmp:Bernstein}--\ref{exmp:algebraic_trigonometric} above correspond to the spaces of solutions of those constant-coefficient homogeneous linear differential equations of order $\eta + 1$ that are determined by the either even or odd characteristic polynomials
	$
	p_{\eta + 1}\left(z\right) = z^\eta, ~
	p_{\eta+1}\left(z\right) := p_{2\mu+1} \left(z\right) =  z \prod_{r = 1}^\mu \left(z^2 + r^2\right), ~
	p_{\eta+1}\left(z\right) := p_{2\mu+1} \left(z\right) =  z \prod_{r = 1}^\mu \left(z^2 - r^2\right)
	$
	and
	$
	p_{\eta+1}\left(z\right) := p_4\left(z\right) = z^2(z^2+1),
	$
	respectively, where $z \in \mathbb{C}$. Constructing the normalized B-basis functions of other reflection invariant (mixed) EC spaces \citep{MainarPena2010}, one can also define other useful systems in a similar fashion.
\end{remark}

\section{Construction of networks of local interpolating optimal curves}\label{sec:optimal_arcs}

Let $(i,j)\in \mathcal{E}$ be an arbitrarily selected directed edge of the given triangular mesh and consider the unit tangent vectors $\mathbf{t}_{i,j}$ and $\mathbf{t}_{j,i}$ associated with vertices $\mathbf{p}_i$ and $\mathbf{p}_j$, respectively. Our first objective is to determine the scaling factors $\lambda_{i,j}$ and $\lambda
_{j,i}$ of tangent vectors $\mathbf{t}_{i,j}$ and $\mathbf{t}_{j,i}$, respectively, such that the curve%
\[
\mathbf{c}_{i,j}\left(  x;\lambda_{i,j},\lambda_{j,i}\right)  =\mathbf{p}%
_{i}\cdot B_{3,0}\left(  x\right)  +\left(  \mathbf{p}_{i}+\lambda
_{i,j}\mathbf{t}_{i,j}\right)  \cdot B_{3,1}\left(  x\right)  +\left(
\mathbf{p}_{j}+\lambda_{j,i}\mathbf{t}_{j,i}\right)  \cdot B_{3,2}\left(
x\right)  +\mathbf{p}_{j}\cdot B_{3,3}\left(  x\right)  ,~x\in\left[
0,\beta\right]
\]
becomes the solution of the optimization problem \citep{Schweikert1966,Barsky84,LasserHagen1992}%
\begin{equation}
\label{eq:strain_energy}
E_{i,j}^{\rho}\left(  \lambda_{i,j},\lambda_{j,i}\right)  :=%
{\displaystyle\sum\limits_{r=1}^{\rho}}
\theta_{r}%
{\displaystyle\int\limits_{0}^{\beta}}
\left\Vert \mathbf{c}_{i,j}^{\left(  r\right)  }\left(  x;\lambda
_{i,j},\lambda_{j,i}\right)  \right\Vert ^{2}\text{d}x\rightarrow\min,
\end{equation}
where $\rho\geq1$ denotes the maximal order of involved derivatives (such that $B_{3,1}^{\left(r\right)} \not\equiv 0 \not\equiv B_{3,2}^{\left(r\right)}, ~\forall r=1,\ldots,\rho$), while
parameters $\left\{  \theta_{r}\right\}  _{r=1}^{\rho}$ are user defined
non-negative weights of rank $1$ (i.e., $\theta_{r}\geq0,~r=1,\ldots
,\rho,~\sum_{r=1}^{\rho}\theta_{r}\neq0$).

Let us denote by $\left\langle\cdot,\cdot\right\rangle$ the inner product of two vectors.
Since this operator is linear in both of its components and $\left\|\cdot\right\|^2:=\left\langle\cdot,\cdot\right\rangle$, by using the notations%
\begin{align*}
\varphi_{k,\ell}^{r}  &  :=\int_{0}^{\beta}B_{3,k}^{\left(
r\right)  }\left(  x\right)  B_{3,\ell}^{\left(  r\right)  }\left(  x\right)
\text{d}x=:\varphi_{\ell,k}^{r},~k,\ell\in\left\{  0,1,2,3\right\}  ,\\
\phi_{k,\ell}^{\rho}  &  :=\sum_{r=1}^{\rho}\theta_r \cdot \varphi_{k,\ell}^{r},~k,\ell
\in\left\{  0,1,2,3\right\}  ,
\end{align*}
one obtains that%
\begin{align*}
&  ~E_{i,j}^{\rho}\left(  \lambda_{i,j},\lambda_{j,i}\right) \\
=  &  ~\left\langle \mathbf{p}_{i},\mathbf{p}_{i}\right\rangle \cdot\phi
_{0,0}^{\rho}+2\cdot\left\langle \mathbf{p}_{i}+\lambda_{i,j}\mathbf{t}%
_{i,j},\mathbf{p}_{i}\right\rangle \cdot\phi_{0,1}^{\rho}+2\cdot\left\langle
\mathbf{p}_{i},\mathbf{p}_{j}+\lambda_{j,i}\mathbf{t}_{j,i}\right\rangle
\cdot\phi_{0,2}^{\rho}+2\cdot\left\langle \mathbf{p}_{i},\mathbf{p}%
_{j}\right\rangle \cdot\phi_{0,3}^{\rho}\\
&  ~+\left\langle \mathbf{p}_{i}+\lambda_{i,j}\mathbf{t}_{i,j},\mathbf{p}%
_{i}+\lambda_{i,j}\mathbf{t}_{i,j}\right\rangle \cdot\phi_{1,1}^{\rho}%
+2\cdot\left\langle \mathbf{p}_{i}+\lambda_{i,j}\mathbf{t}_{i,j}%
,\mathbf{p}_{j}+\lambda_{j,i}\mathbf{t}_{j,i}\right\rangle \cdot\phi
_{1,2}^{\rho}+2\cdot\left\langle \mathbf{p}_{i}+\lambda_{i,j}\mathbf{t}%
_{i,j},\mathbf{p}_{j}\right\rangle \cdot\phi_{1,3}^{\rho}\\
&  ~+\left\langle \mathbf{p}_{j}+\lambda_{j,i}\mathbf{t}_{j,i},\mathbf{p}%
_{j}+\lambda_{j,i}\mathbf{t}_{j,i}\right\rangle \cdot\phi_{2,2}^{\rho}%
+2\cdot\left\langle \mathbf{p}_{j}+\lambda_{j,i}\mathbf{t}_{j,i}%
,\mathbf{p}_{j}\right\rangle \cdot\phi_{2,3}^{\rho}\\
&  ~+\left\langle \mathbf{p}_{j},\mathbf{p}_{j}\right\rangle \cdot\phi
_{3,3}^{\rho},
\end{align*}
by means of which the unknown parameters $\lambda_{i,j},\lambda_{j,i}$ can be
determined as the solution of the linear system%
\[\def\arraystretch{1.8}
\left\{
\begin{array}
[c]{ccc}%
\dfrac{\partial}{\partial\lambda_{i,j}}E_{i,j}^{\rho}\left(  \lambda
_{i,j},\lambda_{j,i}\right)  & = & 0,\\
\dfrac{\partial}{\partial\lambda_{j,i}}E_{i,j}^{\rho}\left(  \lambda
_{i,j},\lambda_{j,i}\right)  & = & 0,
\end{array}
\right.\def\arraystretch{1.3}
\]
the matrix form of which is%
\begin{equation}
\left[
\begin{array}
[c]{cc}%
\phi_{1,1}^{\rho} & \left\langle \mathbf{t}_{i,j},\mathbf{t}_{j,i}%
\right\rangle \cdot\phi_{1,2}^{\rho}\\
\left\langle \mathbf{t}_{i,j},\mathbf{t}_{j,i}\right\rangle \cdot\phi
_{1,2}^{\rho} & \phi_{2,2}^{\rho}%
\end{array}
\right]  \left[
\begin{array}
[c]{c}%
\lambda_{i,j}\\
\lambda_{j,i}%
\end{array}
\right]  =-\left[
\begin{array}
[c]{c}%
\left\langle \mathbf{p}_{i}\cdot\left(  \phi_{0,1}^{\rho}+\phi_{1,1}^{\rho
}\right)  +\mathbf{p}_{j}\cdot\left(  \phi_{1,2}^{\rho}+\phi_{1,3}^{\rho
}\right)  ,\mathbf{t}_{i,j}\right\rangle \\
\left\langle \mathbf{p}_{i}\cdot\left(  \phi_{0,2}^{\rho}+\phi_{1,2}^{\rho
}\right)  +\mathbf{p}_{j}\cdot\left(  \phi_{2,2}^{\rho}+\phi_{2,3}^{\rho
}\right)  ,\mathbf{t}_{j,i}\right\rangle
\end{array}
\right]  . \label{eq:matrix_equation}%
\end{equation}
\begin{proposition}[Uniqueness of the solution]
	The linear system (\ref{eq:matrix_equation}) always admits a unique solution, i.e.,
	\[
	\Delta_{i,j}^{\rho}:=\det\left[
	\begin{array}
	[c]{cc}%
	\phi_{1,1}^{\rho} & \left\langle \mathbf{t}_{i,j},\mathbf{t}_{j,i}%
	\right\rangle \cdot\phi_{1,2}^{\rho}\\
	\left\langle \mathbf{t}_{i,j},\mathbf{t}_{j,i}\right\rangle \cdot\phi
	_{1,2}^{\rho} & \phi_{2,2}^{\rho}%
	\end{array}
	\right]  =\phi_{1,1}^{\rho}\cdot\phi_{2,2}^{\rho}-\left\langle \mathbf{t}%
	_{i,j},\mathbf{t}_{j,i}\right\rangle ^{2}\cdot\left(  \phi_{1,2}^{\rho
	}\right)  ^{2}\neq0.
	\]
\end{proposition}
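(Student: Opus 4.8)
The plan is to establish the stronger assertion $\Delta_{i,j}^{\rho}>0$. The guiding idea is to regard the $2\times2$ matrix of (\ref{eq:matrix_equation}) as a Gram-type matrix for the positive semidefinite symmetric bilinear form
\[
\langle\!\langle f,g\rangle\!\rangle:=\sum_{r=1}^{\rho}\theta_r\int_{0}^{\beta}f^{(r)}(x)\,g^{(r)}(x)\,\mathrm{d}x,
\]
since $\phi_{k,\ell}^{\rho}=\langle\!\langle B_{3,k},B_{3,\ell}\rangle\!\rangle$ for all indices $k,\ell$ and only the off-diagonal entries of the matrix carry the extra scalar $\langle\mathbf{t}_{i,j},\mathbf{t}_{j,i}\rangle$. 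I would open with two elementary reductions. First, differentiating the symmetry relation (\ref{eq:symmetry}) for $k=1$ gives $B_{3,1}^{(r)}(x)=(-1)^{r}B_{3,2}^{(r)}(\beta-x)$, so the substitution $x\mapsto\beta-x$ yields $\varphi_{1,1}^{r}=\varphi_{2,2}^{r}$ for every $r$, hence $\phi_{1,1}^{\rho}=\phi_{2,2}^{\rho}$, and therefore
\[
\Delta_{i,j}^{\rho}=\left(\phi_{1,1}^{\rho}\right)^{2}-\langle\mathbf{t}_{i,j},\mathbf{t}_{j,i}\rangle^{2}\left(\phi_{1,2}^{\rho}\right)^{2}
\]
is a difference of two squares. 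Second, by the very choice of $\rho$ there is at least one index $r\in\{1,\dots,\rho\}$ with $\theta_r>0$, and for that $r$ the continuous function $B_{3,1}^{(r)}$ is not identically zero, whence $\int_{0}^{\beta}(B_{3,1}^{(r)})^{2}\,\mathrm{d}x>0$ and $\phi_{1,1}^{\rho}=\phi_{2,2}^{\rho}>0$.

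Next I would invoke two Cauchy--Schwarz inequalities: the functional one for the semidefinite form above, $(\phi_{1,2}^{\rho})^{2}\le\phi_{1,1}^{\rho}\phi_{2,2}^{\rho}=(\phi_{1,1}^{\rho})^{2}$, and the Euclidean one in $\mathbb{R}^{3}$ applied to the unit vectors $\mathbf{t}_{i,j}$, $\mathbf{t}_{j,i}$, namely $\langle\mathbf{t}_{i,j},\mathbf{t}_{j,i}\rangle^{2}\le\|\mathbf{t}_{i,j}\|^{2}\|\mathbf{t}_{j,i}\|^{2}=1$. Substituting both into the displayed formula for $\Delta_{i,j}^{\rho}$ gives $\Delta_{i,j}^{\rho}\ge(\phi_{1,1}^{\rho})^{2}-(\phi_{1,2}^{\rho})^{2}\ge0$, which already yields non-negativity; it only remains to exclude $\Delta_{i,j}^{\rho}=0$.

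Ruling out equality is the one genuinely delicate step. From $0=\Delta_{i,j}^{\rho}\ge(\phi_{1,1}^{\rho})^{2}-(\phi_{1,2}^{\rho})^{2}\ge0$ one first reads off that the functional Cauchy--Schwarz inequality holds with equality, i.e.\ that $B_{3,1}-\mu B_{3,2}$ lies in the kernel of $\langle\!\langle\cdot,\cdot\rangle\!\rangle$ with $\mu:=\phi_{1,2}^{\rho}/\phi_{1,1}^{\rho}\ne 0$ (here one also uses $\phi_{1,1}^{\rho}>0$); that kernel, restricted to $\operatorname{span}\mathcal{B}$, is exactly the subspace of algebraic polynomials of degree at most $r_{0}-1$ it contains, where $r_{0}:=\min\{r:\theta_r>0\}\le\rho$. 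Substituting the identity $B_{3,1}-\mu B_{3,2}=g$ (with $\deg g\le r_{0}-1$) into the symmetry relation (\ref{eq:symmetry}) and eliminating $B_{3,2}$ shows that $(1-\mu^{2})B_{3,1}$ is again a polynomial of degree at most $r_{0}-1$, so if $\mu^{2}\ne 1$ then $B_{3,1}^{(r_{0})}\equiv0$, contradicting the defining property of $\rho$. In the two remaining cases $\mu=\pm1$ one gets that $B_{3,1}-B_{3,2}$ (respectively $B_{3,1}+B_{3,2}$) is a polynomial of degree at most $r_{0}-1$ which also vanishes at $x=0$ and $x=\beta$ (because $B_{3,1}(0)=B_{3,2}(0)=B_{3,1}(\beta)=B_{3,2}(\beta)=0$) and is, respectively, antisymmetric about $\beta/2$ or non-negative on $[0,\beta]$. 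In the principal situation $\theta_1>0$ one has $r_{0}=1$, the kernel collapses to the constants, and both cases are immediate: the first forces $B_{3,1}\equiv B_{3,2}$, contradicting the linear independence of $\mathcal{B}$, and the second forces $B_{3,1}\equiv B_{3,2}\equiv0$, again contradicting the choice of $\rho$; for $r_{0}\ge2$ one excludes both using the Hermite-type endpoint behaviour (\ref{eq:Hermite_conditions_0})--(\ref{eq:Hermite_conditions_alpha}) inherited from the normalized B-basis out of which $\mathcal{B}$ is built. Hence equality never occurs, $\Delta_{i,j}^{\rho}>0$, and (\ref{eq:matrix_equation}) has a unique solution. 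I expect precisely this equality discussion — above all the $\mu=\pm1$ subcases for a general admissible system $\mathcal{B}$ — to be the main obstacle; the remainder of the argument is routine.
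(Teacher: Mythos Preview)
Your approach is essentially the paper's: apply Cauchy--Schwarz to the semidefinite form $\langle\!\langle f,g\rangle\!\rangle=\sum_r\theta_r\int_0^\beta f^{(r)}g^{(r)}\,\mathrm{d}x$ to bound $(\phi_{1,2}^{\rho})^2$ by $\phi_{1,1}^{\rho}\phi_{2,2}^{\rho}$, apply Cauchy--Schwarz in $\mathbb{R}^3$ to the unit tangents to bound $\langle\mathbf{t}_{i,j},\mathbf{t}_{j,i}\rangle^2$ by $1$, and combine. Your preliminary observation $\phi_{1,1}^{\rho}=\phi_{2,2}^{\rho}$ is a mild simplification the paper does not make, but it is inessential.

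The only real difference lies in how equality is excluded. The paper dispatches this in one sentence: equality in the functional Cauchy--Schwarz would force $B_{3,1}^{(r)}=\alpha B_{3,2}^{(r)}$ for all $r=1,\dots,\rho$ and some fixed $\alpha$, which it declares ``impossible by definition'' from the symmetry $B_{3,1}^{(r)}(x)=(-1)^rB_{3,2}^{(r)}(\beta-x)$ together with $B_{3,1}^{(r)}\not\equiv0\not\equiv B_{3,2}^{(r)}$. Your kernel analysis (reducing to $B_{3,1}-\mu B_{3,2}$ a polynomial of degree $<r_0$, then splitting into $\mu^2\ne1$ and $\mu=\pm1$) is a genuine refinement of that sentence; note that the paper's symmetry argument, taken literally, also only yields $\alpha^2=1$ and does not separately treat $\alpha=\pm1$. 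So the loose end you flag for $r_0\ge2$ is not something the paper resolves more fully than you do --- it simply does not raise it.
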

\begin{proof}
	Observe that by means of the well-known Cauchy--Schwarz inequality, one can
	obtain that%
	\begin{align*}
	\left(  \phi_{1,2}^{\rho}\right)  ^{2}  &  =\left(  \sum_{r=1}^{\rho}%
	\theta_{r}\int_{0}^{\beta}B_{3,1}^{\left(  r\right)  }\left(  x\right)
	B_{3,2}^{\left(  r\right)  }\left(  x\right)  \text{d}x\right)  ^{2}\\
	&  \leq\left(  \sum_{r=1}^{\rho}\theta_{r}\int_{0}^{\beta}\left(
	B_{3,1}^{\left(  r\right)  }\left(  x\right)  \right)  ^{2}\text{d}x\right)
	\cdot\left(  \sum_{r=1}^{\rho}\theta_{r}\int_{0}^{\beta}\left(  B_{3,2}%
	^{\left(  r\right)  }\left(  x\right)  \right)  ^{2}\text{d}x\right) \\
	&  =\phi_{1,1}^{\rho}\cdot\phi_{2,2}^{\rho},
	\end{align*}
	where the equality holds if and only if $B_{3,1}^{\left(r\right)}\left(x\right) = \alpha \cdot B_{3,2}^{\left(r\right)}\left(x\right),~\forall x \in \left[0,\beta\right],~\forall r=1,\ldots,\rho$ for some fixed constant $\alpha\in\mathbb{R}$, which is impossible by definition, since $B_{3,1}^{\left(r\right)} \not\equiv 0 \not\equiv B_{3,2}^{\left(r\right)}$ and $B_{3,1}^{\left(r\right)}\left(x\right) = \left(-1\right)^r B_{3,2}^{\left(r\right)}\left(\beta - x\right),~\forall x \in \left[0,\beta\right],~\forall r=1,\ldots,\rho$ due to the symmetry condition (\ref{eq:symmetry}), therefore%
	\begin{equation}
	\left(  \phi_{1,2}^{\rho}\right)  ^{2}<\phi_{1,1}^{\rho}\cdot\phi_{2,2}^{\rho
	}. \label{eq:CBS_1}%
	\end{equation}

	Using once again the Cauchy--Schwarz inequality, we also obtain that%
	\begin{equation}
	0\leq\left\langle \mathbf{t}_{i,j},\mathbf{t}_{j,i}\right\rangle ^{2}%
	\leq\left\Vert \mathbf{t}_{i,j}\right\Vert ^{2}\cdot\left\Vert \mathbf{t}%
	_{j,i}\right\Vert ^{2}=1. \label{eq:CBS_2}%
	\end{equation}
	Combining inequalities (\ref{eq:CBS_1}) and (\ref{eq:CBS_2}), finally, we have
	that %
	$
	\Delta_{i,j}^{\rho}>0,
	$
	i.e., the system (\ref{eq:matrix_equation}) always admits the unique solution%
	\begin{small}
		\def\arraystretch{1.6}
		\begin{equation}
		\label{eq:solution}
		\left[
		\begin{array}
		[c]{c}%
		\lambda_{i,j}\\
		\lambda_{j,i}%
		\end{array}
		\right] 
		=-\dfrac{1}{\Delta_{i,j}^{\rho}}\cdot\left[
		\begin{array}
		[c]{cc}%
		\phi_{2,2}^{\rho} & -\left\langle \mathbf{t}_{i,j},\mathbf{t}_{j,i}%
		\right\rangle \cdot\phi_{1,2}^{\rho}\\
		-\left\langle \mathbf{t}_{i,j},\mathbf{t}_{j,i}\right\rangle \cdot\phi
		_{1,2}^{\rho} & \phi_{1,1}^{\rho}%
		\end{array}
		\right]  \cdot\left[
		\begin{array}
		[c]{c}%
		\left\langle \mathbf{p}_{i}\cdot\left(  \phi_{0,1}^{\rho}+\phi_{1,1}^{\rho
		}\right)  +\mathbf{p}_{j}\cdot\left(  \phi_{1,2}^{\rho}+\phi_{1,3}^{\rho
	}\right)  ,\mathbf{t}_{i,j}\right\rangle \\
	\left\langle \mathbf{p}_{i}\cdot\left(  \phi_{0,2}^{\rho}+\phi_{1,2}^{\rho
	}\right)  +\mathbf{p}_{j}\cdot\left(  \phi_{2,2}^{\rho}+\phi_{2,3}^{\rho
}\right)  ,\mathbf{t}_{j,i}\right\rangle
\end{array}
\right]
\end{equation}
\end{small}which completes the proof.\def\arraystretch{1.3}
\end{proof}

\begin{remark}[Invariance property of the solution]
	Observe that the unique solution (\ref{eq:solution}) of the linear system (\ref{eq:matrix_equation}) is invariant
	under parametrizations of the applied basis functions (\ref{eq:system}), moreover isometries and uniform scalings of the given mesh vertices would generate congruent or proportional local interpolating optimal arcs, respectively.
\end{remark}

The remaining parts of the current section provides explicit closed formulas for (\ref{eq:solution}) in case of basis functions that were introduced in Examples \ref{exmp:Bernstein}, \ref{exmp:Sanchez}, \ref{exmp:hyperbolic} and \ref{exmp:algebraic_trigonometric}.

\begin{example}
[Cubic Bernstein polynomials; $\rho=\theta_{1}=1$]%
\label{exmp:Bersntein_rho_1_theta1_1}Assume that the system $\mathcal{B}$
denotes the cubic Bernstein polynomials defined on $\left[
0,1\right]  $ and let $\rho=\theta_{1}=1$ be fixed parameters. In this
case, by means of values $\{\varphi_{0,1}^1,\varphi_{0,2}^1,\varphi_{1,1}^1,\varphi_{1,2}^1,\varphi_{1,3}^1,\allowbreak{}\varphi_{2,2}^1,\varphi_{2,3}^1\}$ listed in Appendix \ref{app:cubic_Bernstein_r_1}, one has that%
\begin{small}
\[
\left[
\begin{array}
[c]{c}%
\lambda_{i,j}\\
\lambda_{j,i}%
\end{array}
\right]  =\frac{1}{16-\left\langle \mathbf{t}_{i,j},\mathbf{t}_{j,i}%
\right\rangle ^{2}}\cdot\left[
\begin{array}
[c]{c}%
\left\langle \mathbf{p}_{j}-\mathbf{p}_{i},~\mathbf{t}_{j,i}\cdot\left\langle
\mathbf{t}_{i,j},\mathbf{t}_{j,i}\right\rangle +4\cdot\mathbf{t}%
_{i,j}\right\rangle \\
\left\langle \mathbf{p}_{i}-\mathbf{p}_{j},~\mathbf{t}_{i,j}\cdot\left\langle
\mathbf{t}_{i,j},\mathbf{t}_{j,i}\right\rangle +4\cdot\mathbf{t}%
_{j,i}\right\rangle
\end{array}
\right]  .
\]
\end{small}Using these parameter settings in case of a triangulated cube, Fig.\ \ref{fig:curve_energy_effect_of_order_and_shape_parameter}(\textit{a}) illustrates a network of local interpolating optimal cubic B\'ezier curves. Each arc of the network locally minimizes the squared length variation of its tangent vectors.
\end{example}

\begin{figure}[H]
	\begin{center}
		\includegraphics%
		{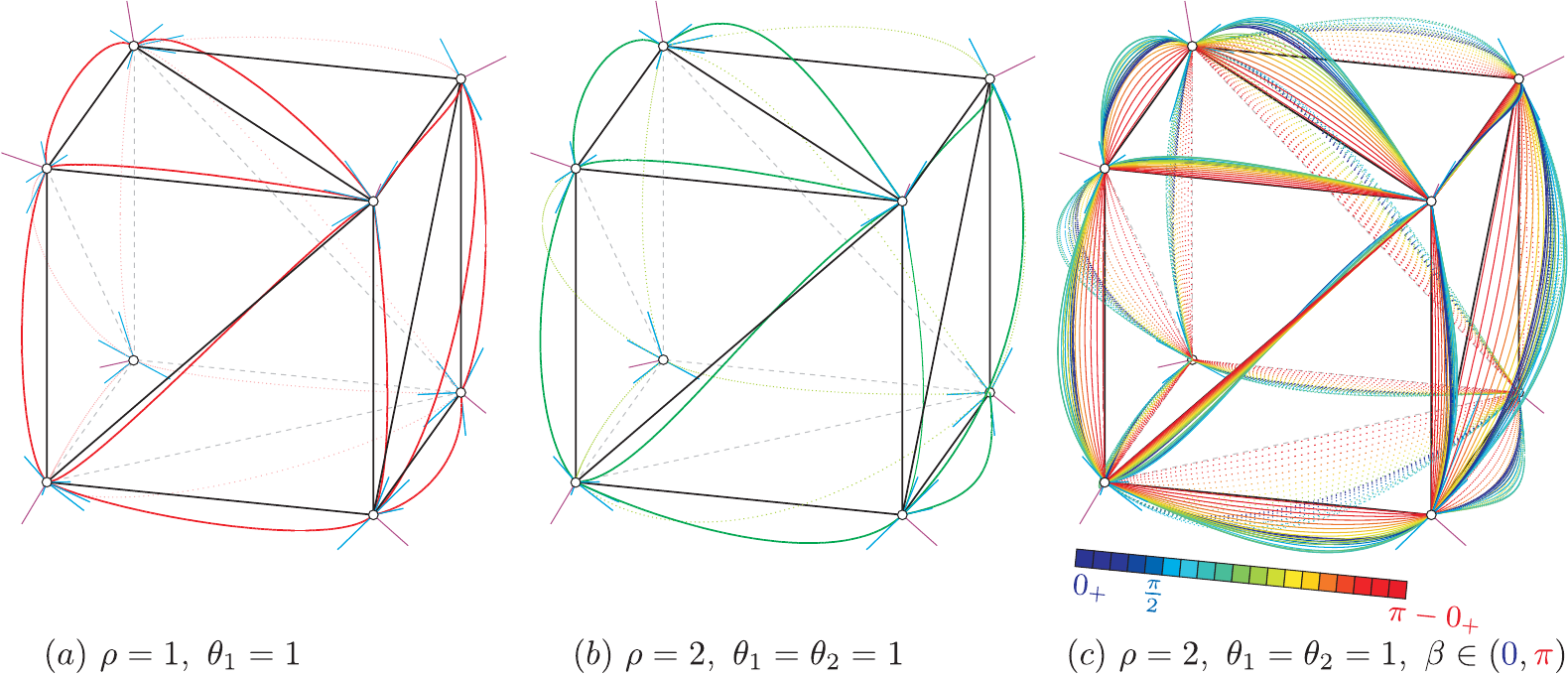}%
		\caption{Cases (\emph{a}), (\emph{b}) and (\emph{c}) show the output of the
			local interpolating piecewise optimal curve network construction method in case of parameter settings
			described in Examples \ref{exmp:Bersntein_rho_1_theta1_1},
			\ref{exmp:Bersntein_rho_2_theta1_1_theta2_1}\ and
			\ref{exmp:univariate_trigonometric_rho_1_2}, respectively.}%
		\label{fig:curve_energy_effect_of_order_and_shape_parameter}%
	\end{center}
\end{figure}

\begin{example}
[Cubic Bernstein polynomials; $\rho=2$, $\theta_{1}=\theta_{2}=1$%
]\label{exmp:Bersntein_rho_2_theta1_1_theta2_1}Let $\mathcal{B}$ once again be
the system of cubic Bernstein polynomials on $\left[
0,1\right]  $, and assume that $\rho=2$ and $\theta_{1}=\theta_{2}=1$ are
given parameters. Then, by using the values $\left\{\varphi_{0,1}^r,\varphi_{0,2}^r,\varphi_{1,1}^r,\varphi_{1,2}^r,\varphi_{1,3}^r,\varphi_{2,2}^r,\varphi_{2,3}^r\right\}_{r=1}^2$ listed in Appendices \ref{app:cubic_Bernstein_r_1}--\ref{app:cubic_Bernstein_r_2}, one obtains that%
\begin{small}
\[
\left[
\begin{array}
[c]{c}%
\lambda_{i,j}\\
\lambda_{j,i}%
\end{array}
\right]  =\frac{1}{15376-3481\cdot\left\langle \mathbf{t}_{i,j},\mathbf{t}%
_{j,i}\right\rangle ^{2}}\cdot\left[
\begin{array}
[c]{c}%
\left\langle \mathbf{p}_{j}-\mathbf{p}_{i},~7564\cdot\mathbf{t}_{i,j}%
-3599\cdot\mathbf{t}_{j,i}\cdot\left\langle \mathbf{t}_{i,j},\mathbf{t}%
_{j,i}\right\rangle \right\rangle \\
\left\langle \mathbf{p}_{i}-\mathbf{p}_{j},~7564\cdot\mathbf{t}_{j,i}%
-3599\cdot\mathbf{t}_{i,j}\cdot\left\langle \mathbf{t}_{i,j},\mathbf{t}%
_{j,i}\right\rangle \right\rangle
\end{array}
\right]  .
\]
\end{small}Using the same triangulated cube as in Example \ref{exmp:Bersntein_rho_1_theta1_1}, Fig.\ \ref{fig:curve_energy_effect_of_order_and_shape_parameter}(\textit{b}) shows the local interpolating optimal cubic B\'ezier curve network determined by the parameter settings above. The obtained network consists of arcs that locally minimize the combined squared length variation of their first and second order derivatives.
\end{example}

\begin{example}
	[Trigonometric basis functions; $\rho=2$%
	]\label{exmp:univariate_trigonometric_rho_1_2}
Let $\beta\in\left(  0,\pi\right)  $ and $\rho=2$ be
	 fixed parameters and consider the trigonometric basis functions introduced in Example \ref{exmp:Sanchez}. Evaluating the expressions $\{\varphi_{0,1}^r,\varphi_{0,2}^r,\varphi_{1,1}^r,\varphi_{1,2}^r,\allowbreak{}\varphi_{1,3}^r,\varphi_{2,2}^r,\allowbreak{}\varphi_{2,3}^r\}_{r=1}^2$ listed in Appendices \ref{app:trigonometric_r_1}--\ref{app:trigonometric_r_2} for $\beta = \frac{\pi}{2}$, in case of weight vectors $[\theta_1 = 1, ~\theta_2 = 0]$ and $[\theta_1 = 1, ~\theta_2 = 1]$ one can write the solution (\ref{eq:solution}) in the form
\begin{small}
\begin{align*}
\left[
\begin{array}
[c]{c}%
\lambda_{i,j}\\
\lambda_{j,i}%
\end{array}
\right]  &=\dfrac{3\pi-8}{2\cdot\left(  \left(  6\pi-16\right)  ^{2}-\left(
	10-3\pi\right)  ^{2}\cdot\left\langle \mathbf{t}_{i,j},\mathbf{t}%
	_{j,i}\right\rangle ^{2}\right)  }\cdot\left[
\begin{array}
[c]{c}%
\left\langle \mathbf{p}_{j}-\mathbf{p}_{i},\left(6\pi - 16\right)  \cdot\mathbf{t}_{i,j}+\left(  10-3\pi\right)
\cdot\left\langle \mathbf{t}_{i,j},\mathbf{t}_{j,i}\right\rangle
\cdot\mathbf{t}_{j,i}%
\right\rangle \\
\left\langle \mathbf{p}_{i}-\mathbf{p}_{j},\left(6\pi-16\right)  \cdot\mathbf{t}_{j,i}+\left(  10-3\pi\right)
\cdot\left\langle \mathbf{t}_{i,j},\mathbf{t}_{j,i}\right\rangle
\cdot\mathbf{t}_{i,j}%
\right\rangle
\end{array}
\right]
\end{align*}
\end{small}and
\begin{small}
\begin{align*}
\left[
\begin{array}
[c]{c}%
\lambda_{i,j}
\\
\lambda_{j,i}%
\end{array}
\right]   &= \frac{15\pi-16}{2\cdot\left(  \left(  21\pi-32\right)
	^{2}-\left(  15\pi-32\right)  ^{2} \cdot \left\langle \mathbf{t}_{i,j},\mathbf{t}_{j,i}\right\rangle ^{2}
	\right)  } 
~\cdot\left[
\begin{array}
[c]{c}%
\left\langle \mathbf{p}_{j}-\mathbf{p}_{i},\left(  21\pi-32\right)
\cdot\mathbf{t}_{i,j}-\left(  15\pi-32\right)  \cdot\left\langle
\mathbf{t}_{i,j},\mathbf{t}_{j,i}\right\rangle \cdot\mathbf{t}_{j,i}%
\right\rangle \\
\left\langle \mathbf{p}_{i}-\mathbf{p}_{j},\left(  21\pi-32\right)
\cdot\mathbf{t}_{j,i}-\left(  15\pi-32\right)  \cdot\left\langle
\mathbf{t}_{i,j},\mathbf{t}_{j,i}\right\rangle \cdot\mathbf{t}_{i,j}%
\right\rangle
\end{array}
\right],
\end{align*}
\end{small}respectively. Using the same triangulated cube as in Examples \ref{exmp:Bersntein_rho_1_theta1_1}--\ref{exmp:Bersntein_rho_2_theta1_1_theta2_1}, Fig.\ \ref{fig:curve_energy_effect_of_order_and_shape_parameter}(\textit{c}) uses the weights $\theta_1 = \theta_2 = 1$ and shows the effect of the design parameter $\beta\in\left(0,\pi\right)$ on the shape of the local interpolating optimal curve network that -- compared with the polynomial Example \ref{exmp:Bersntein_rho_2_theta1_1_theta2_1} -- now consists of second order (quartic) trigonometric arcs which also minimize the combined squared length variations of their velocity and acceleration vectors.
\end{example}

In case of hyperbolic and algebraic-trigonometric basis functions, Appendix \ref{app:univariate_integrals} lists further explicit formulas for the corresponding $\beta$-dependent values of the integrals $\varphi^{r}_{0,1} = \varphi^{r}_{2,3}$, $\varphi^{r}_{0,2} = \varphi^{r}_{1,3}$, $\varphi^{r}_{1,1}=\varphi^{r}_{2,2}$ and $\varphi^{r}_{1,2}$, where $r=1,2$. As in case of Examples \ref{exmp:Bersntein_rho_1_theta1_1}--\ref{exmp:univariate_trigonometric_rho_1_2}, for a given shape parameter $\beta$ and non-negative weight vector $\left[\theta_r\right]_{r=1}^2$ of rank 1, these values can easily be evaluated and substituted into the general formula (\ref{eq:solution}).

\section{Construction of $C^{0}$ continuous local interpolating optimal triangular spline surfaces}\label{sec:C0_optimal_patches}

Consider the counterclockwise oriented triangular face $\left(  i,j,k\right)  \in\mathcal{F}$, the
domain%
\[
\Omega_{\beta}=\left\{  \left(  u,v,w\right)  :u,v,w\in\left[  0,\beta\right]
,~u+v+w=\beta\right\}
\]
and the triangular surface%
\begin{equation}
\mathbf{s}_{i,j,k}\left(  u,v,w\right)  =\sum_{r=0}^{3}\sum_{s=0}%
^{3-r}\mathbf{p}_{r,s,3-r-s}^{i,j,k}T_{r,s,3-r-s}\left(  u,v,w\right)
,~\left(  u,v,w\right)  \in\Omega_{\beta}, \label{eq:local_triangular_surface}%
\end{equation}
where the sufficiently smooth non-negative constrained trivariate normalized function system%
\begin{equation}
\mathcal{T}_{\beta}:=\left\{  T_{r,s,3-r-s}\left(  u,v,w\right)  :\left(  u,v,w\right)  \in
\Omega_{\beta}\right\}  _{r=0,~s=0}^{3,~3-r}
\label{eq:trivariate_function_system}%
\end{equation}
is linearly independent and also fulfills the boundary conditions%
\begin{align*}
\left\{  T_{r,s,3-r-s}\left(  0,\beta-w,w\right)  :w\in\left[  0,\beta\right]
\right\}  _{r=0,~s=0}^{3,~3-r}  &  =\left\{  B_{3,0}\left(  w\right)
,B_{3,1}\left(  w\right)  ,B_{3,2}\left(  w\right)  ,B_{3,3}\left(  w\right)
:w\in\left[  0,\beta\right]  \right\}  ,\\
\left\{  T_{r,s,3-r-s}\left(  u,0,\beta-u\right)  :u\in\left[  0,\beta\right]
\right\}  _{r=0,~s=0}^{3,~3-r}  &  =\left\{  B_{3,0}\left(  u\right)
,B_{3,1}\left(  u\right)  ,B_{3,2}\left(  u\right)  ,B_{3,3}\left(  u\right)
:u\in\left[  0,\beta\right]  \right\}  ,\\
\left\{  T_{r,s,3-r-s}\left(  \beta-v,v,0\right)  :v\in\left[  0,\beta\right]
\right\}  _{r=0,~s=0}^{3,~3-r}  &  =\left\{  B_{3,0}\left(  v\right)
,B_{3,1}\left(  v\right)  ,B_{3,2}\left(  v\right)  ,B_{3,3}\left(  v\right)
:v\in\left[  0,\beta\right]  \right\}  .
\end{align*}

Using the notations of Fig.\ \ref{fig:local_surface}, we also assume that the
boundary curves are determined by the control polygons%
\[%
\begin{array}
[c]{llll}%
\lbrack~\mathbf{p}_{3,0,0}^{i,j,k}=\mathbf{p}_{i}, & \mathbf{p}_{2,0,1}%
^{i,j,k}=\mathbf{p}_{i}+\lambda_{i,j}\mathbf{t}_{i,j}, & \mathbf{p}%
_{1,0,2}^{i,j,k}=\mathbf{p}_{j}+\lambda_{j,i}\mathbf{t}_{j,i}, &
\mathbf{p}_{0,0,3}^{i,j,k}=\mathbf{p}_{j}~],\\
\lbrack~\mathbf{p}_{0,0,3}^{i,j,k}=\mathbf{p}_{j}, & \mathbf{p}_{0,1,2}%
^{i,j,k}=\mathbf{p}_{j}+\lambda_{j,k}\mathbf{t}_{j,k}, & \mathbf{p}%
_{0,2,1}^{i,j,k}=\mathbf{p}_{k}+\lambda_{k,j}\mathbf{t}_{k,j}, &
\mathbf{p}_{0,3,0}^{i,j,k}=\mathbf{p}_{k}~],\\
\lbrack~\mathbf{p}_{0,3,0}^{i,j,k}=\mathbf{p}_{k}, & \mathbf{p}_{1,2,0}%
^{i,j,k}=\mathbf{p}_{k}+\lambda_{k,i}\mathbf{t}_{k,i}, & \mathbf{p}%
_{2,1,0}^{i,j,k}=\mathbf{p}_{i}+\lambda_{i,k}\mathbf{t}_{i,k}, &
\mathbf{p}_{3,0,0}^{i,j,k}=\mathbf{p}_{i}~].
\end{array}
\]

\begin{figure}
[H]
\centering
\includegraphics%
{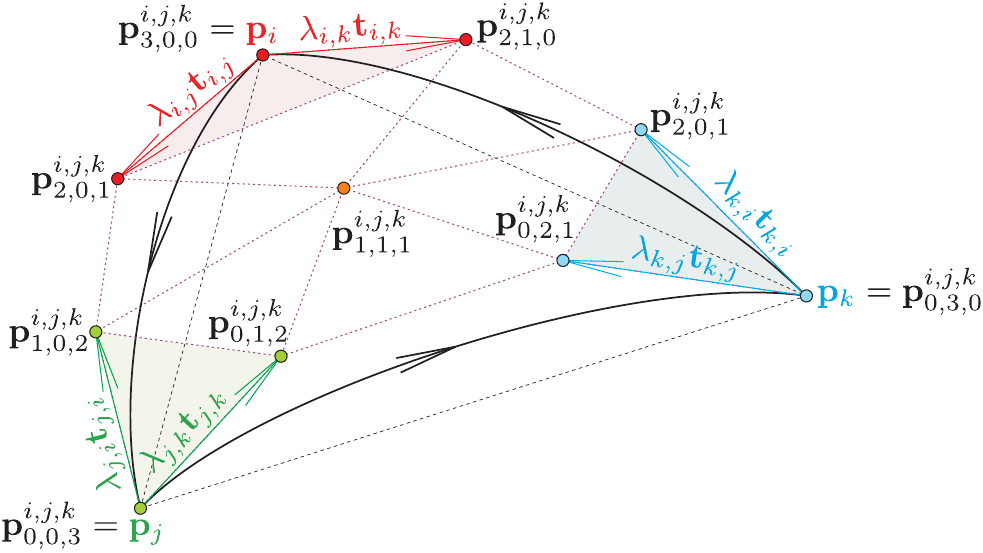}%
\caption{The control net of the local interpolating optimal triangular surface patch to be generated}%
\label{fig:local_surface}%
\end{figure} 

The unknown control point $\mathbf{p}_{1,1,1}^{i,j,k}$ of the local interpolating triangular
surface element $\mathbf{s}_{i,j,k}$ will be determined as the unique solution
of the optimization problem%
\[
\left\{
\begin{array}
[c]{l}%
E_{i,j,k}^{\gamma}\left( \mathbf{p}_{1,1,1}^{i,j,k} \right)  \rightarrow
\min,\\
\mathbf{p}_{1,1,1}^{i,j,k}\in%
\mathbb{R}^{3},
\end{array}
\right.
\]
where%
\begin{equation}
\label{eq:thin_plate_spline}
\begin{array}{rl}
&E_{i,j,k}^{\gamma}\left(  \mathbf{p}_{1,1,1}^{i,j,k}\right)
\\
=  &  \displaystyle\sum_{g=1}^{\gamma}\varepsilon_{g}\left(  \sum_{z=0}^{g}\binom{g}%
{z}\int_{0}^{\beta}\left(\int_{0}^{\beta-x}\left(  \sum_{r=0}^{3}\sum_{s=0}%
^{3-r}\mathbf{p}_{r,s,3-r-s}^{i,j,k}\frac{\partial^{g}}{\partial x^{z}\partial
	y^{g-z}}T_{r,s,3-r-s}\left(  x,y,\beta-x-y\right)  \right)  ^{2}%
\text{d}y\right)\text{d}x\right)
\end{array}
\end{equation}
denotes the generalized quadratic thin-plate spline surface energy \citep{Duchon1977} of order $\gamma
\geq1$, while the parameters $\left\{  \varepsilon_{g}\right\}  _{g=1}%
^{\gamma}$ are user defined non-negative weights of rank $1$.

Using the notations%
\begin{equation}
\def\arraystretch{1.7}
\label{eq:double_integrals}
\begin{array}{rl}
&\tau_{r,s,3-r-s}^{z,~g-z} \\  :=&\displaystyle\int_{0}^{\beta}\left(
\int_{0}^{\beta-x}\left(  \frac{\partial^{g}}{\partial x^{z}\partial y^{g-z}%
}T_{r,s,3-r-s}\left(  x,y,\beta-x-y\right)  \cdot\frac{\partial^{g}}{\partial
x^{z}\partial y^{g-z}}T_{1,1,1}\left(  x,y,\beta-x-y\right)  \right)
\text{d}y\right)  \text{d}x,
\\
&s=0,\ldots,3-r,~r=0,1,2,3,
\end{array}
\def\arraystretch{1.3}
\end{equation}
the $\mathbf{p}_{1,1,1}^{i,j,k}$-dependent part of the energy functional
(\ref{eq:thin_plate_spline}) is%
\begin{align*}
\widetilde{E}_{i,j,k}^{\gamma}\left(  \mathbf{p}_{1,1,1}^{i,j,k}\right)   
=&~2\sum_{g=1}^{\gamma}\varepsilon_{g}\sum_{z=0}^{g}\binom{g}{z}\left\langle \sum
_{r=0,~r\neq1}^{3}\sum_{s=0}^{3-r}\tau_{r,s,3-r-s}^{z,g-z}\cdot\mathbf{p}%
_{r,s,3-r-s}^{i,j,k}+\sum_{s=0,~s\neq1}^{2}\tau_{1,s,2-s}^{z,g-z}\cdot\mathbf{p}%
_{1,s,2-s}^{i,j,k},~\mathbf{p}_{1,1,1}^{i,j,k}\right\rangle \\
&  ~+\left(\sum_{g=1}^{\gamma}\varepsilon_{g}\sum_{z=0}^{g}\binom{g}{z}\tau_{1,1,1}^{z,g-z}\right)%
\cdot\left\langle \mathbf{p}_{1,1,1}^{i,j,k},~\mathbf{p}_{1,1,1}^{i,j,k}\right\rangle .
\end{align*}

\begin{proposition}[Uniqueness of the solution]
Provided that $\mathcal{T}_{\beta}$, $\gamma$ and $\{\varepsilon_g\}_{g=1}^{\gamma}$ are chosen such that the coefficient of $\langle \mathbf{p}_{1,1,1}^{i,j,k},~\mathbf{p}_{1,1,1}^{i,j,k}\rangle$ in $\widetilde{E}_{i,j,k}^{\gamma
}\left(\mathbf{p}_{1,1,1}^{i,j,k}\right)$ is not zero, the solution of%
\begin{align*}
\frac{\partial}{\partial\mathbf{p}_{1,1,1}^{i,j,k}}\widetilde{E}_{i,j,k}^{\gamma
}\left(  \mathbf{p}_{1,1,1}^{i,j,k}\right)  :=  &  ~2\sum_{g=1}^{\gamma}\varepsilon_{g}\sum_{z=0}%
^{g}\binom{g}{z}\left(  \sum_{r=0,~r\neq1}^{3}\sum_{s=0}^{3-r}\tau
_{r,s,3-r-s}^{z,g-z}\cdot\mathbf{p}_{r,s,3-r-s}^{i,j,k}+\sum_{s=0,~s\neq1}^{2}%
\tau_{1,s,2-s}^{z,g-z}\cdot\mathbf{p}_{1,s,2-s}^{i,j,k}\right) \\
&  ~+\left(2\sum_{g=1}^{\gamma}\varepsilon_{g}\sum_{z=0}^{g}\binom{g}{z}\tau_{1,1,1}^{z,g-z}\right)%
\cdot\mathbf{p}_{1,1,1}^{i,j,k}\\
  =&~\mathbf{0}%
\end{align*}
is the unique critical control point%
\begin{equation}
\label{eq:solution_surface}
\mathbf{p}_{1,1,1}^{i,j,k}=-\frac{\displaystyle\sum\limits_{g=1}^{\gamma}\varepsilon_{g}\displaystyle\sum\limits_{z=0}^{g}%
	\binom{g}{z}\left(  \displaystyle\sum\limits_{r=0,~r\neq1}^{3}\displaystyle\sum\limits_{s=0}^{3-r}\tau_{r,s,3-r-s}%
	^{z,g-z}\cdot\mathbf{p}_{r,s,3-r-s}^{i,j,k}+\displaystyle\sum\limits_{s=0,~s\neq1}^{2}\tau_{1,s,2-s}%
	^{z,g-z}\cdot\mathbf{p}_{1,s,2-s}^{i,j,k}\right)}{%
{\displaystyle\sum\limits_{g=1}^{\gamma}}
\varepsilon_{g}{\displaystyle\sum\limits_{z=0}^{g}}
\dbinom{g}{z}\tau_{1,1,1}^{z,g-z}}.
\end{equation}
\end{proposition}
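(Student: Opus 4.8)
The plan is to exploit that, once the boundary control points are fixed by the construction of Section~\ref{sec:optimal_arcs}, the restriction of $E_{i,j,k}^{\gamma}$ to the single free vector $\mathbf{p}_{1,1,1}^{i,j,k}\in\mathbb{R}^{3}$ is a strictly convex quadratic function of its three coordinates. Indeed, $E_{i,j,k}^{\gamma}$ and its $\mathbf{p}_{1,1,1}^{i,j,k}$-dependent part $\widetilde{E}_{i,j,k}^{\gamma}$ differ only by a term depending exclusively on the already-determined boundary control points, so they share the same set of minimizers in $\mathbf{p}_{1,1,1}^{i,j,k}$, and it suffices to analyze $\widetilde{E}_{i,j,k}^{\gamma}$.

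First I would read off from the displayed expansion of $\widetilde{E}_{i,j,k}^{\gamma}$ that it has the form
\[
\widetilde{E}_{i,j,k}^{\gamma}\left(\mathbf{p}_{1,1,1}^{i,j,k}\right)=2\left\langle \mathbf{a}_{i,j,k},~\mathbf{p}_{1,1,1}^{i,j,k}\right\rangle + c_{i,j,k}\left\langle \mathbf{p}_{1,1,1}^{i,j,k},~\mathbf{p}_{1,1,1}^{i,j,k}\right\rangle ,
\]
where $\mathbf{a}_{i,j,k}\in\mathbb{R}^{3}$ is the numerator of (\ref{eq:solution_surface}), assembled from the fixed boundary control points and the double integrals (\ref{eq:double_integrals}), and $c_{i,j,k}=\sum_{g=1}^{\gamma}\varepsilon_{g}\sum_{z=0}^{g}\binom{g}{z}\tau_{1,1,1}^{z,g-z}$ is the scalar coefficient of $\langle\mathbf{p}_{1,1,1}^{i,j,k},\mathbf{p}_{1,1,1}^{i,j,k}\rangle$ figuring in the statement. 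The next point is that $c_{i,j,k}>0$: by (\ref{eq:double_integrals}) with $r=s=1$ one has $\tau_{1,1,1}^{z,g-z}=\int_{0}^{\beta}\int_{0}^{\beta-x}\bigl(\frac{\partial^{g}}{\partial x^{z}\partial y^{g-z}}T_{1,1,1}(x,y,\beta-x-y)\bigr)^{2}\,\text{d}y\,\text{d}x\geq0$, so $c_{i,j,k}$ is a sum of products of the non-negative weights $\varepsilon_{g}$, the positive binomial coefficients and these non-negative integrals; hence $c_{i,j,k}\geq0$, and the standing hypothesis that this coefficient does not vanish upgrades the inequality to $c_{i,j,k}>0$.

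With this in hand the conclusion is immediate. Completing the square gives $\widetilde{E}_{i,j,k}^{\gamma}(\mathbf{p})=c_{i,j,k}\bigl\|\mathbf{p}+c_{i,j,k}^{-1}\mathbf{a}_{i,j,k}\bigr\|^{2}-c_{i,j,k}^{-1}\|\mathbf{a}_{i,j,k}\|^{2}$, which, since $c_{i,j,k}>0$, attains its minimum at the unique point $\mathbf{p}_{1,1,1}^{i,j,k}=-c_{i,j,k}^{-1}\mathbf{a}_{i,j,k}$; this is exactly (\ref{eq:solution_surface}). Equivalently, differentiating coordinatewise one finds $\frac{\partial}{\partial\mathbf{p}_{1,1,1}^{i,j,k}}\widetilde{E}_{i,j,k}^{\gamma}=2\mathbf{a}_{i,j,k}+2c_{i,j,k}\mathbf{p}_{1,1,1}^{i,j,k}$ --- which is precisely the displayed gradient in the statement --- with constant positive-definite Hessian $2c_{i,j,k}\mathbf{I}_{3}$, so the critical-point equation has the single solution (\ref{eq:solution_surface}) and it is the global minimizer.

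There is essentially no obstacle here: the statement reduces to solving the linear equation $2c_{i,j,k}\mathbf{p}+2\mathbf{a}_{i,j,k}=\mathbf{0}$ with $c_{i,j,k}\neq0$. The only points deserving a line of justification are the non-negativity of $\tau_{1,1,1}^{z,g-z}$ (immediate, as it is the integral of a square) and the interchange of differentiation with the double integral used to obtain the gradient formula, which is licensed by the smoothness of the constrained trivariate system $\mathcal{T}_{\beta}$ and the compactness of $\Omega_{\beta}$.
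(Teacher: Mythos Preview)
Your argument is correct. In fact the paper states this proposition without proof: the proposition environment is followed immediately by the invariance remark, with no proof block in between. The reason is that the claim is essentially self-evident once the gradient has been written down---setting $2c_{i,j,k}\mathbf{p}+2\mathbf{a}_{i,j,k}=\mathbf{0}$ with the hypothesis $c_{i,j,k}\neq0$ yields (\ref{eq:solution_surface}) by division.

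Your write-up actually goes a bit further than the paper requires: by observing that $\tau_{1,1,1}^{z,g-z}$ is the integral of a square and hence non-negative, you upgrade the hypothesis $c_{i,j,k}\neq0$ to $c_{i,j,k}>0$, which lets you conclude via the completed-square form (or the Hessian $2c_{i,j,k}\mathbf{I}_3\succ0$) that the critical point is the global minimizer rather than merely a critical point. This is a welcome addition, since the surrounding text speaks of minimizing $E_{i,j,k}^{\gamma}$, not just of finding a stationary point. The remark about interchanging differentiation with the double integral is also sound but not strictly needed here, since the gradient formula is taken as given in the statement of the proposition.
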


\begin{remark}[Invariance property of the solution]
	Observe that unique solution (\ref{eq:solution_surface}) is also invariant under the parametrization of the applied constrained trivariate basis functions (\ref{eq:trivariate_function_system}), moreover isometries and uniform scalings of the given mesh vertices would generate congruent and proportional local interpolating optimal triangular patches, respectively.
\end{remark}

The following examples provide proper constrained trivariate linearly
independent function systems that can be used for the description of the triangular surface element (\ref{eq:local_triangular_surface}).

\begin{example}
[Constrained trivariate Bernstein polynomials; $\beta=1$]\label{exmp:trivariate_Bernstein}If the boundary
curves of (\ref{eq:local_triangular_surface}) are determined by means of
cubic Bernstein polynomials defined on the interval $\left[  0,1\right]  $,
then one should define the function system
(\ref{eq:trivariate_function_system}) as the cubic constrained trivariate
Bernstein polynomials%
\begin{align*}
T_{r,s,3-r-s}\left(  u,v,w\right)   = & ~B_{r,s,3-r-s}^3\left(  u,v,w\right)  \\ =&~ \frac{3!}{r!s!\left(  3-r-s\right)
!}u^{r}v^{s}w^{3-r-s},~\left(u,v,w\right)\in\Omega_1,
~s  =0,\ldots,3-r,~r=0,1,2,3
\end{align*}
that would lead to triangular cubic Bernstein--B\'ezier patches \citep{Farin1986}.
However, if one uses quartic Bernstein polynomials at the boundary $\partial\Omega_{1}$ as illustrated in Example $\ref{exmp:Bernstein}$, then one can define the functions of the required constrained trivariate basis as the linearly independent combinations
\begin{align*}
T_{0,0,3}\left(  u,v,w\right)  = &~ B_{0,0,4}^4\left(  u,v,w\right), \\
T_{0,1,2}\left(  u,v,w\right) =&~
B_{0,1,3}^4\left(  u,v,w\right)+
\frac{1}{2}B_{0,2,2}^4\left(  u,v,w\right),
\\
T_{0,2,1}\left(  u,v,w\right) =&~
\frac{1}{2}B_{0,2,2}^4\left(  u,v,w\right)+
B_{0,3,1}^4\left(  u,v,w\right),
\\
T_{0,3,0}\left(  u,v,w\right) =&~
B_{0,4,0}^4\left(  u,v,w\right),
\\
\\
T_{1,0,2}\left(  u,v,w\right) =&~
B_{1,0,3}^4\left(  u,v,w\right)
+
\frac{1}{2}B_{2,0,2}^4\left(  u,v,w\right)
,
\\
T_{1,1,1}\left(  u,v,w\right)= &~
B_{1,1,2}^4\left(  u,v,w\right)
+
B_{1,2,1}^4\left(  u,v,w\right)
+
B_{2,1,1}^4\left(  u,v,w\right)
,
\\
T_{1,2,0}\left(  u,v,w\right) =&~
B_{1,3,0}^4\left(  u,v,w\right)
+
\frac{1}{2}B_{2,2,0}^4\left(  u,v,w\right)
,
\\
\\
T_{2,0,1}\left(  u,v,w\right) =&~
\frac{1}{2}B_{2,0,2}^4\left(  u,v,w\right)
+
B_{3,0,1}^4\left(  u,v,w\right)
,
\\
T_{2,1,0}\left(  u,v,w\right) =&~
\frac{1}{2}B_{2,2,0}^4\left(  u,v,w\right)
+
B_{3,1,0}^4\left(  u,v,w\right)
,
\\
\\
T_{3,0,0}\left(  u,v,w\right) =&~
B_{4,0,0}^4\left(  u,v,w\right)
\end{align*}
of quartic constrained trivariate Bernstein polynomials. In this way the constructed functions will also fulfill six cyclic symmetry properties in their variables. The construction steps and the layout of the original and final systems of functions can also be seen in Fig.\ \ref{fig:quartic_polynomial_and_second_order_trigonometric_layout}(\textit{b})$\to$(\textit{a}). For the sake of convenience, in case of the latter quartic basis, the values of those double integrals (\ref{eq:double_integrals}) that are required for the minimization of the thin-plate spline energy (\ref{eq:thin_plate_spline}) of order at most $\gamma=2$, can be found in Appendix \ref{app:polynomial}.
\end{example}

\begin{figure}[H]
	\centering
	\includegraphics[width = \textwidth]{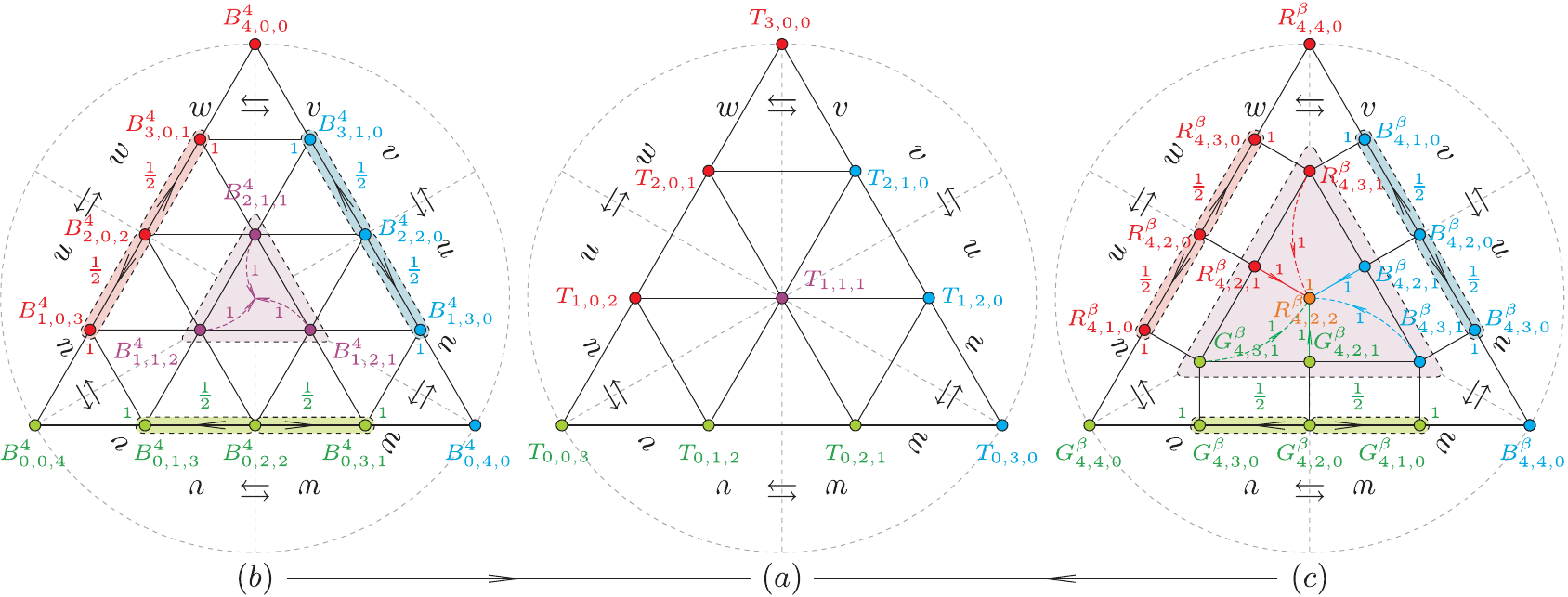}
	\caption{Cases (\textit{b}) and (\textit{c}) show the layout of the constrained trivariate quartic Bernstein polynomials and of the second order trigonometric basis that are detailed in Examples \ref{exmp:trivariate_Bernstein} and \ref{exmp:trivariate_trigonometric}, respectively. The highlighted nodes represent those functions that have to be linearly combined with the shown weights above the arrows or near the nodes in order to obtain the final non-negative normalized basis (\textit{a}) that preserves the six cyclic symmetry properties of the original systems.}
	\label{fig:quartic_polynomial_and_second_order_trigonometric_layout}
\end{figure}

\begin{example}
[Constrained trivariate trigonometric basis functions;~$\beta\in\left(
0,\pi\right)  $]\label{exmp:trivariate_trigonometric}Consider the domain $\Omega_{\beta}$, where $\beta\in\left(  0,\pi\right)  $ is
an arbitrarily fixed shape parameter. Using the second order non-negative constrained
trivariate normalized trigonometric basis functions%
\begin{align*}
R_{4,4,0}^{\beta}\left(  u,v,w\right)   &  =\frac{1}{\sin^{4}\left(
\frac{\beta}{2}\right)  }\sin^{4}\left(  \frac{u}{2}\right)  ,~R_{4,3,0}%
^{\beta}\left(  u,v,w\right)  =\frac{4\cos\left(  \frac{\beta}{2}\right)
}{\sin^{4}\left(  \frac{\beta}{2}\right)  }\sin^{3}\left(  \frac{u}{2}\right)
\sin\left(  \frac{w}{2}\right)  \cos\left(  \frac{v}{2}\right)  ,\\
R_{4,2,0}^{\beta}\left(  u,v,w\right)   &  =\frac{2+4\cos^{2}\left(
\frac{\beta}{2}\right)  }{\sin^{4}\left(  \frac{\beta}{2}\right)  }\sin
^{2}\left(  \frac{u}{2}\right)  \sin^{2}\left(  \frac{w}{2}\right)  \cos
^{2}\left(  \frac{v}{2}\right)  ,\\
R_{4,1,0}^{\beta}\left(  u,v,w\right)   &  =R_{4,3,0}^{\beta}\left(
w,v,u\right)  ,\\
R_{4,3,1}^{\beta}\left(  u,v,w\right)   &  =\frac{4+8\cos^{2}\left(
\frac{\beta}{2}\right)  }{\sin^{5}\left(  \frac{\beta}{2}\right)  }\sin
^{3}\left(  \frac{u}{2}\right)  \sin\left(  \frac{w}{2}\right)  \sin\left(
\frac{v}{2}\right)  ,\\
R_{4,2,1}^{\beta}\left(  u,v,w\right)  &=\frac
{16\cos\left(  \frac{\beta}{2}\right)  +8\cos^{3}\left(  \frac{\beta}%
{2}\right)  }{\sin^{5}\left(  \frac{\beta}{2}\right)  } \sin^2\left(\frac{u}{2}\right)\sin^2\left(\frac{w}{2}\right)\cos\left(\frac{v}{2}\right)\sin\left(\frac{v}{2}\right),\\
R_{4,2,2}^{\beta}\left(  u,v,w\right)   &  =\frac{10+20\cos^{2}\left(
\frac{\beta}{2}\right)  }{\sin^{4}\left(  \frac{\beta}{2}\right)  }\sin^2\left(
\frac{u}{2}\right)  \sin^2\left(  \frac{v}{2}\right)  \sin^2\left(  \frac{w}%
{2}\right)  ,\\
& \\
G_{4,4,0}^{\beta}\left(  u,v,w\right)   &  =R_{4,4,0}^{\beta}\left(
w,u,v\right)  ,~G_{4,3,0}^{\beta}\left(  u,v,w\right)  =R_{4,3,0}^{\beta
}\left(  w,u,v\right)  ,~G_{4,2,0}^{\beta}\left(  u,v,w\right)  =R_{4,2,0}%
^{\beta}\left(  w,u,v\right)  ,\\
G_{4,1,0}^{\beta}\left(  u,v,w\right)   &  =G_{4,3,0}^{\beta}\left(
u,w,v\right)  ,\\
G_{4,3,1}^{\beta}\left(  u,v,w\right)   &  =R_{4,3,1}^{\beta}\left(
w,u,v\right)  ,~G_{4,2,1}^{\beta}\left(  u,v,w\right)  =R_{4,2,1}^{\beta
}\left(  w,u,v\right) \\
& \\
B_{4,4,0}^{\beta}\left(  u,v,w\right)   &  =R_{4,4,0}^{\beta}\left(
v,w,u\right)  ,~B_{4,3,0}^{\beta}\left(  u,v,w\right)  =B_{4,3,0}^{\beta
}\left(  v,w,u\right)  ,~B_{4,2,0}^{\beta}\left(  v,w,u\right)  =R_{4,2,0}%
^{\beta}\left(  v,w,u\right)  ,\\
B_{4,1,0}^{\beta}\left(  u,v,w\right)   &  =B_{4,3,0}^{\beta}\left(
v,u,w\right)  ,\\
B_{4,3,1}^{\beta}\left(  u,v,w\right)   &  =R_{4,3,1}^{\beta}\left(
v,w,u\right)  ,~B_{4,2,1}^{\beta}\left(  u,v,w\right)  =R_{4,2,1}^{\beta
}\left(  v,w,u\right)  ,
\end{align*}
a linearly independent function system that is compatible along the boundary $\partial\Omega_{\beta}$ with the univariate trigonometric basis of Example
\ref{exmp:Sanchez} and also fulfills six cyclic symmetry properties can be constructed as follows:%
\begin{align*}
T_{0,0,3}\left(  u,v,w\right)  =  &  ~G_{4,4,0}^{\beta}\left(  u,v,w\right)
,\\
T_{0,1,2}\left(  u,v,w\right)  =  &  ~G_{4,3,0}^{\beta}\left(  u,v,w\right)
+\frac{1}{2}G_{4,2,0}^{\beta}\left(  u,v,w\right)  ,\\
T_{0,2,1}\left(  u,v,w\right)  =  &  ~\frac{1}{2}G_{4,2,0}^{\beta}\left(
u,v,w\right)  +G_{4,1,0}^{\beta}\left(  u,v,w\right)  ,\\
T_{0,3,0}\left(  u,v,w\right)  =  &  ~B_{4,4,0}^{\beta}\left(  u,v,w\right)
,\\
\\
T_{1,0,2}\left(  u,v,w\right)  =  &  ~\frac{1}{2}R_{4,2,0}^{\beta}\left(  u,v,w\right)
+R_{4,1,0}^{\beta}\left(  u,v,w\right)  ,\\
T_{1,1,1}\left(  u,v,w\right)  =  &  ~R_{4,3,1}^{\beta}\left(  u,v,w\right)
+R_{4,2,1}^{\beta}\left(  u,v,w\right)  +R_{4,2,2}^{\beta}\left(  u,v,w\right)
\\
&  ~+G_{4,3,1}^{\beta}\left(  u,v,w\right)  +G_{4,2,1}^{\beta}\left(
u,v,w\right) \\
&  ~+B_{4,3,1}^{\beta}\left(  u,v,w\right)  +B_{4,2,1}^{\beta}\left(
u,v,w\right)  ,\\
T_{1,2,0}\left(  u,v,w\right)  =  &  ~B_{4,3,0}^{\beta}\left(  u,v,w\right)
+\frac{1}{2}B_{4,2,0}^{\beta}\left(  u,v,w\right)  ,\\
\\
T_{2,0,1}\left(  u,v,w\right)  =  &  ~R_{4,3,0}^{\beta}\left(
u,v,w\right)  +\frac{1}{2}R_{4,2,0}^{\beta}\left(  u,v,w\right)  ,\\
T_{2,1,0}\left(  u,v,w\right)  =  &  ~\frac{1}{2}B_{4,2,0}^{\beta}\left(
u,v,w\right)  +B_{4,1,0}^{\beta}\left(  u,v,w\right)  ,\\
\\
T_{3,0,0}\left(  u,v,w\right)  =  &  ~R_{4,4,0}^{\beta}\left(  u,v,w\right).
\end{align*}
The construction steps and the layout of the original and final constrained trivariate functions can also be seen in Fig.\ \ref{fig:quartic_polynomial_and_second_order_trigonometric_layout}(\textit{c})$\to$(\textit{a}). Basis functions $\left\{R_{4,4-i,j}\right\}_{j=0,i=j}^{1,3-j}$, $R_{4,2,2}$, $\left\{G_{4,4-i,j}\right\}_{j=0,i=j}^{1,3-j}$ and $\left\{B_{4,4-i,j}\right\}_{j=0,i=j}^{1,3-j}$ were first introduced and later generalized to higher order in \citep{ShenWang2010} and \citep{Roth2013}, respectively. For the sake of convenience, the $\beta$-dependent values of those double integrals (\ref{eq:double_integrals}) that are required for the minimization of the thin-plate spline energy (\ref{eq:thin_plate_spline}) of order at most $\gamma=2$, can be found in Appendix \ref{app:trigonometric}.
\end{example}

\begin{example}[Constrained trivariate hyperbolic basis functions; $\beta > 0$]Let $\beta > 0$ be a fixed shape parameter. Using hyperbolic sine and cosine functions instead of the trigonometric ones presented in Example \ref{exmp:trivariate_trigonometric}, one obtains a non-negative constrained trivariate normalized hyperbolic basis that is compatible along $\partial\Omega_{\beta}$ with the univariate hyperbolic basis described in Example \ref{exmp:hyperbolic}.
\end{example}

\begin{example}[Constrained trivariate algebraic-trigonometric basis functions; $\beta\in\left(0,2\pi\right)$]\label{exmp:trivariate_algebraic_trigonometric}
	In this case, one can use the non-negative constrained trivariate normalized basis functions
\begin{align*}
T_{0,0,3}\left(  u,v,w\right)  =&~T_{3,0,0}\left(  w,v,u\right)
,
~T_{0,1,2}\left(  u,v,w\right)  =T_{0,2,1}\left(  u,w,v\right)
,
~
T_{0,2,1}\left(  u,v,w\right)  =T_{1,2,0}\left(  w,v,u\right)  
,\\
T_{0,3,0}\left(  u,v,w\right)  =&~T_{3,0,0}\left(  v,u,w\right)  
,\\
& \\
T_{1,0,2}\left(  u,v,w\right)  =&~T_{2,0,1}\left(  w,v,u\right) 
,\\
T_{1,1,1}\left(  u,v,w\right)  = &  ~\frac{4\left(  3\beta+4\sin\left(
	\beta\right)  -\beta\cos\left(  \beta\right)  \right)  \cos\left(  \frac
	{\beta}{2}\right)    }{\left(  2\sin\left(  \beta\right) - \beta -\beta
	\cos\left(  \beta\right) \right)  \left(  \beta-\sin\left(  \beta\right)
	\right) }\sin\left(  \frac{u}{2}\right)  \sin\left(  \frac{v}%
{2}\right)  \sin\left(  \frac{w}{2}\right)
\\
&  ~-\frac{4\sin\left(  \beta\right)  \cos\left(  \frac{\beta}{2}\right)
	}{\left(  2\sin\left(  \beta\right) - \beta -\beta
	\cos\left(  \beta\right) \right)  \left(  \beta-\sin\left(  \beta\right)
	\right)  } \left(u\cos\left(  \frac{u}{2}\right)  \sin\left(  \frac{v}{2}\right)
\sin\left(  \frac{w}{2}\right)\right.
\\
&  ~\left.+ v\sin\left(  \frac{u}{2}\right)  \cos\left(  \frac{v}{2}\right)
\sin\left(  \frac{w}{2}\right)
+ w\sin\left(  \frac{u}{2}\right)  \sin\left(  \frac{v}{2}\right)
\cos\left(  \frac{w}{2}\right)\right),\\
T_{1,2,0}\left(  u,v,w\right)  =&~T_{2,1,0}\left(  v,u,w\right)  
,\\
& \\
T_{2,0,1}\left(  u,v,w\right)  =&~T_{2,1,0}\left(  u,w,v\right)  
,\\
T_{2,1,0}\left(  u,v,w\right)  =& ~ ~\frac{\left(  v+\sin\left(  v\right)
	+\sin\left(  u\right)  -\sin\left(  \beta-w\right)  +u\cos\left(
	\beta-w\right)  -\left(  \beta-w\right)  \cos\left(  u\right)  \right)
	\sin\left(  \beta\right)  }{\left(  2\sin\left(  \beta\right) - \beta -\beta
	\cos\left(  \beta\right) \right)  \left(  \beta-\sin\left(  \beta\right)
	\right)  },\\
& \\
T_{3,0,0}\left(  u,v,w\right) =  &~  ~\frac{u-\sin\left(  u\right)  }%
{\beta-\sin\left(  \beta\right)  }%
\end{align*}
that were introduced in \citep{WeiShenWang2011} and which degenerate at $\partial\Omega_{\beta}$ to the univariate algebraic-trigonometric normalized B-basis presented in Example \ref{exmp:algebraic_trigonometric}. Once again, for the sake of convenience, the $\beta$-dependent values of those double integrals (\ref{eq:double_integrals}) that are required for the minimization of the thin-plate spline energy (\ref{eq:thin_plate_spline}) of order at most $\gamma=2$, can be found in Appendix \ref{app:algebraic_trigonometric}.
\end{example}

\section{Construction of visually smooth quasi-optimal Nielson-type transfinite triangular interpolants}\label{sec:generalized_G1_Nielson_patches}

In order to generate $G^1$ continuous triangular interpolants that fit the local interpolating piecewise optimal curve network constructed in Section \ref{sec:optimal_arcs}, this part of the manuscript follows the side-vertex transfinite interpolation scheme presented in \citep{Nielson1987}.  However, compared to this technique:
\begin{itemize}[noitemsep]
	\item 
	instead of cubic Hermite basis functions we use function systems obtained from non-negative normalized B-bases as described in Examples \ref{exmp:Bernstein}--\ref{exmp:algebraic_trigonometric}, thus ensuring more optimal shape preserving properties;
	
	\item
	by means of the local interpolating optimal $C^0$ triangular patches of Section \ref{sec:C0_optimal_patches} we propose continuous quasi-optimal vector fields of averaged unit normals along the common boundary curves;
	
	\item
	we also impose further optimality constraints concerning the isoparametric lines of those groups of three side-vertex interpolants that have to be convexly blended in order to generate the final visually smooth local interpolating quasi-optimal triangular surface patches.
\end{itemize}

In general, the local interpolating piecewise optimal triangular surface patches%
\[
\mathbf{s}_{i,j,k}\left(  u,v,w\right)  ,~\left(  u,v,w\right)  \in
\Omega_{\beta},~\left(  i,j,k\right)  \in\mathcal{F}%
\]
constructed in Section \ref{sec:C0_optimal_patches} are only $C^{0}$ continuous along their common boundary curves as it can also
be seen in cases (\textit{a}), (\textit{b}) and (\textit{c}) of Fig.\ \ref{fig:C0_patches_normal_vector_fields}. However, they can be used to define continuous vector fields
of averaged unit normals along the corresponding joints as it is illustrated in Fig.\ \ref{fig:C0_patches_normal_vector_fields}(\textit{d}) and mathematically detailed below. 

\begin{figure}[H]
	\centering
	\includegraphics{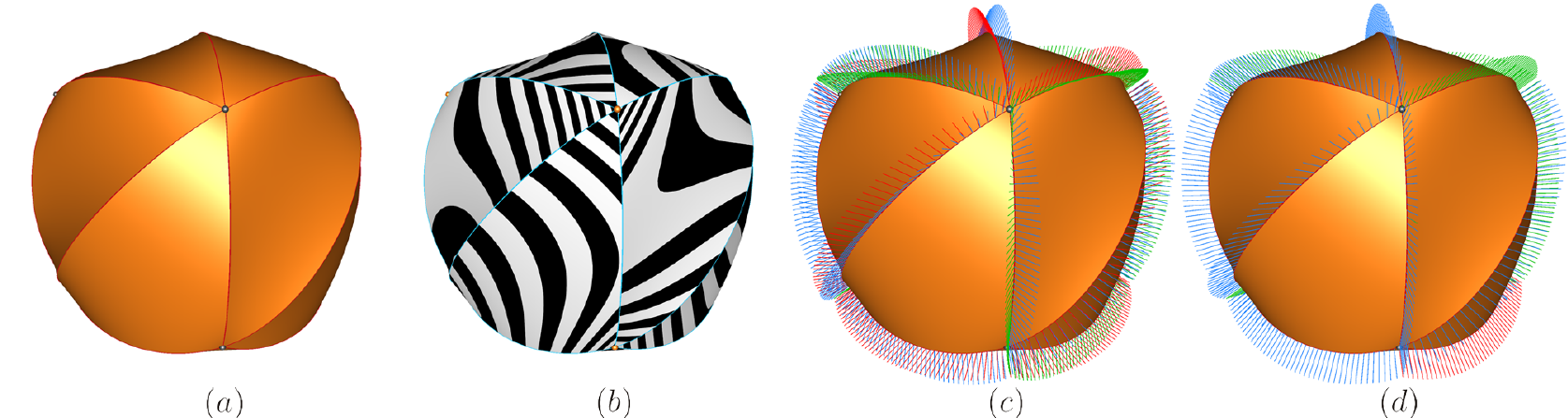}
	\caption{(\textit{a}) Optimal $C^0$ continuous cubic triangular Bernstein--B\'ezier interpolants obtained by parameter settings $\rho = \gamma = 2$ and $\theta_1=\theta_2=\varepsilon_1=\varepsilon_2 = 1$. (\textit{b}) Discontinuous reflection lines across the common local interpolating optimal boundary curves. (\textit{c}) Different vector fields of unit normals along the shared boundary curves. (\textit{d}) Continuous vector fields of averaged unit normals along the joints. ($n_v = 8$, $n_f=12$)}
	\label{fig:C0_patches_normal_vector_fields}
\end{figure}

Consider for example
the neighboring counterclockwise oriented faces $\left(  i,j,k\right)
,\left(  \ell,k,j\right)  \in\mathcal{F}$ and the local interpolating optimal triangular surface
patches $\mathbf{s}_{i,j,k}\left(  u,v,w\right)  $ and $\mathbf{s}_{\ell
,k,j}\left(  u,v,w\right)  $ implied by them, i.e., the selected patches share
the optimal boundary curve
\[
\mathbf{c}_{j,k}\left(  u\right)  =\mathbf{s}_{i,j,k}\left(  u,0,\beta
-u\right)  =\mathbf{s}_{\ell,k,j}\left(  \beta
-u,0,u\right)  ,~u\in\left[
0,\beta\right]  .
\]
If $\mathbf{n}_{i,j,k}\left(  u,v,w\right)  $ and $\mathbf{n}_{\ell
,k,j}\left(  u,v,w\right)  $ denote the unit normal vectors\ that belong to the
surface points $\mathbf{s}_{i,j,k}\left(  u,v,w\right)  $ and $\mathbf{s}%
_{\ell,k,j}\left(  u,v,w\right)  $, respectively, then one can define the
continuous vector field%
\begin{equation}
\label{eq:vector_field_of_averaged_normals}
\widetilde{\mathbf{n}}_{j,k}\left(  u\right)  =\frac{\mathbf{n}_{i,j,k}\left(
u,0,\beta
-u\right)  +\mathbf{n}_{\ell,k,j}\left(  \beta
-u,0,u\right)
}{\left\Vert \mathbf{n}_{i,j,k}\left(  u,0,\beta
-u\right)  +\mathbf{n}%
_{\ell,k,j}\left(  \beta-u,0,u\right)  \right\Vert },~u\in\left[
0,\beta\right]
\end{equation}
of averaged unit normals along the shared boundary curve $\mathbf{c}%
_{j,k}\left(  u\right)  $. If the selected face $\left(  i,j,k\right)  $ does
not have a neighbor across the edge $\left(  j,k\right)  $, then instead of
averaging one can use the inherited unit normal vector field
\begin{equation}
\label{eq:inherited_vector_fieldnormals}
\widetilde{\mathbf{n}}_{j,k}\left(  u\right)  =\frac{\mathbf{n}_{i,j,k}\left(
u,0,\beta-u\right)  }{\left\Vert \mathbf{n}_{i,j,k}\left(  u,0,\beta-u\right)
\right\Vert },~u\in\left[  0,\beta\right].
\end{equation}

Based on the connectivity information stored in $\mathcal{F}$ one can evaluate
all vector fields of (averaged) unit normals along the curve network
constructed in Section \ref{sec:optimal_arcs}. Therefore, in case of an arbitrarily
selected face $\left(  i,j,k\right)  \in\mathcal{F}$, we also assume the
existence of the unit normal vector field operators $\widetilde{\mathbf{n}%
}_{k,i}\left(  v\right) $ and $\widetilde
{\mathbf{n}}_{i,j}\left(  w\right)$ along the
boundaries $\mathbf{c}_{k,i}\left(  v\right) $
and $\mathbf{c}_{i,j}\left(  w\right)$, respectively, where $v,w\in\left[0,\beta\right]$. Due to the definition of these continuous normal vector fields, one has that
\[
\mathbf{n}_i = \widetilde{\mathbf{n}}_{i,j}\left(0\right) = \widetilde{\mathbf{n}}_{k,i}\left(\beta\right),~~
\mathbf{n}_j = \widetilde{\mathbf{n}}_{i,j}\left(\beta\right) = \widetilde{\mathbf{n}}_{j,k}\left(0\right)~~\text{and}~~
\mathbf{n}_k = \widetilde{\mathbf{n}}_{j,k}\left(\beta\right) = \widetilde{\mathbf{n}}_{k,i}\left(0\right)
\]
for all faces $\left( i,j,k\right)  \in\mathcal{F}$.

In case of the selected face $\left(  i,j,k\right)  \in\mathcal{F}$, consider
the boundary curve $\mathbf{c}_{j,k}\left(  u;\lambda_{j,k},\lambda
_{j,k}\right)$, its opposite vertex
$\mathbf{p}_{i}$ and its associated vector field $\widetilde{\mathbf{n}}%
_{j,k}\left(  u\right)$ of averaged unit normals. In what follows, the triangular surface patch%
\begin{equation}
\widetilde{\mathbf{s}}_{i,\left(  j,k\right)  }\left(  b_{0},b_{1}%
,b_{2}\right)  ,~\left(  b_{0},b_{1},b_{2}\right)  \in\Omega_{1-0_+}
\label{eq:patch_i_jk}%
\end{equation}
is constructed by joining the points of the selected
boundary with the opposite vertex $\mathbf{p}_{i}$ by optimal curves as it is illustrated in Fig.\
\ref{fig:side_vertex}, where $0_+$ denotes an arbitrarily small positive number.%

\begin{figure}
[H]
\centering
\includegraphics{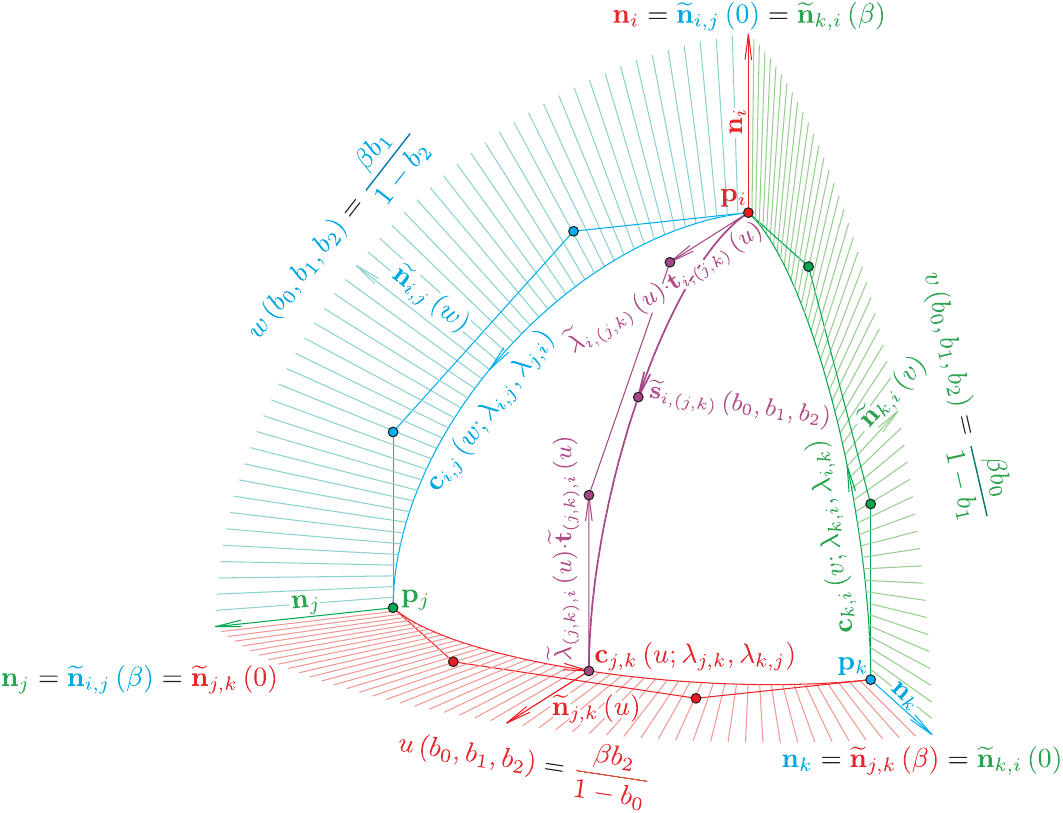}%
\caption{Construction of an optimal triangular side-vertex interpolant}%
\label{fig:side_vertex}%
\end{figure}

The parametric equation of the patch (\ref{eq:patch_i_jk}) can be derived as follows:

\begin{itemize}[noitemsep]
	\item for an arbitrarily fixed parameter value $u\in\left[  0,\beta\right]  $
	calculate the unit normal, binormal and tangent vectors%
	\[%
	\begin{array}
	[c]{cclccl}%
	\mathbf{f}_{i} & = & -\mathbf{n}_{i}, & \widetilde{\mathbf{f}}_{j,k}\left(
	u\right)  & = & -\widetilde{\mathbf{n}}_{j,k}\left(  u\right)  ,\\
	&  &  &  &  & \\
	\mathbf{b}_{i,\left(  j,k\right)  }\left(  u\right)  & = & \dfrac{\left(
		\mathbf{c}_{j,k}\left(  u;\lambda_{j,k},\lambda_{k,j}\right)  -\mathbf{p}%
		_{i}\right)  \times\mathbf{f}_{i}}{\left\Vert \left(  \mathbf{c}_{j,k}\left(
		u;\lambda_{j,k},\lambda_{k,j}\right)  -\mathbf{p}_{i}\right)  \times
		\mathbf{f}_{i}\right\Vert }, & \widetilde{\mathbf{b}}_{\left(  j,k\right)
		,i}\left(  u\right)  & = & \dfrac{\left(  \mathbf{p}_{i}-\mathbf{c}%
		_{j,k}\left(  u;\lambda_{j,k},\lambda_{k,j}\right)  \right)  \times
		\widetilde{\mathbf{f}}_{j,k}\left(  u\right)  }{\left\Vert \left(
		\mathbf{p}_{i}-\mathbf{c}_{j,k}\left(  u;\lambda_{j,k},\lambda_{k,j}\right)
		\right)  \times\widetilde{\mathbf{f}}_{j,k}\left(  u\right)  \right\Vert }%
	\end{array}
	\]
	and%
	\[%
	\begin{array}
	[c]{ccccccc}%
	\mathbf{t}_{i,\left(  j,k\right)  }\left(  u\right)  & = & \mathbf{f}%
	_{i}\times\mathbf{b}_{i,\left(  j,k\right)  }\left(  u\right)  , &  &
	\widetilde{\mathbf{t}}_{\left(  j,k\right)  ,i}\left(  u\right)  & = &
	\widetilde{\mathbf{f}}_{j,k}\left(  u\right)  \times\widetilde{\mathbf{b}%
	}_{\left(  j,k\right)  ,i}\left(  u\right)  ,
	\end{array}
	\]
	respectively;
	
	\item by means of the determinant%
	\[
	\widetilde{\Delta}_{i,\left(  j,k\right)  }^{\rho}\left(  u\right)
	=\phi_{1,1}^{\rho}\cdot\phi_{2,2}^{\rho}-\left\langle \mathbf{t}_{i,\left(
		j,k\right)  }\left(  u\right)  ,\widetilde{\mathbf{t}}_{\left(  j,k\right)
		,i}\left(  u\right)  \right\rangle ^{2}\cdot\left(  \phi_{1,2}^{\rho}\right)
	^{2}>0%
	\]
	determine the optimal scaling factors%
	\begin{equation}%
	\label{eq:point-wise_solution}
	\begin{array}
	[c]{ccl}%
	\left[
	\begin{array}
	[c]{c}%
	\widetilde{\lambda}_{i,\left(  j,k\right)  }\left(  u\right) \\
	\widetilde{\lambda}_{\left(  j,k\right)  ,i}\left(  u\right)
	\end{array}
	\right]  & = & -\dfrac{1}{\widetilde{\Delta}_{i,\left(  j,k\right)  }^{\rho
		}\left(  u\right)  }\\[1.5em]
	&&\cdot\left[
	\begin{array}
	[c]{cc}%
	\phi_{2,2}^{\rho} & -\left\langle \mathbf{t}_{i,\left(  j,k\right)  }\left(
	u\right)  ,\widetilde{\mathbf{t}}_{\left(  j,k\right)  ,i}\left(  u\right)
	\right\rangle \cdot\phi_{1,2}^{\rho}\\
	-\left\langle \mathbf{t}_{i,\left(  j,k\right)  }\left(  u\right)
	,\widetilde{\mathbf{t}}_{\left(  j,k\right)  ,i}\left(  u\right)
	\right\rangle \cdot\phi_{1,2}^{\rho} & \phi_{1,1}^{\rho}%
	\end{array}
	\right]\\[1.25em]
	&  & \cdot\left[
	\begin{array}
	[c]{c}%
	\left\langle \mathbf{p}_{i}\cdot\left(  \phi_{0,1}^{\rho}+\phi_{1,1}^{\rho
	}\right)  +\mathbf{c}_{j,k}\left(  u;\lambda_{j,k},\lambda_{k,j}\right)
	\cdot\left(  \phi_{1,2}^{\rho}+\phi_{1,3}^{\rho}\right)  ,\mathbf{t}%
	_{i,\left(  j,k\right)  }\left(  u\right)  \right\rangle \\
	\left\langle \mathbf{p}_{i}\cdot\left(  \phi_{0,2}^{\rho}+\phi_{1,2}^{\rho
	}\right)  +\mathbf{c}_{j,k}\left(  u;\lambda_{j,k},\lambda_{k,j}\right)
	\cdot\left(  \phi_{2,2}^{\rho}+\phi_{2,3}^{\rho}\right)  ,\widetilde
	{\mathbf{t}}_{\left(  j,k\right)  ,i}\left(  u\right)  \right\rangle
	\end{array}
	\right]
	\end{array}
	\end{equation}
	of the unit tangent vectors $\mathbf{t}_{i,\left(  j,k\right)  }\left(
	u\right)  $ and $\widetilde{\mathbf{t}}_{\left(  j,k\right)  ,i}\left(
	u\right)  $ that are associated with the vertex $\mathbf{p}_{i}$ and the curve
	point $\mathbf{c}_{j,k}\left(  u;\lambda_{j,k},\lambda_{k,j}\right)  $, respectively;
	
	\item for an arbitrarily selected barycentric coordinate $\left(  b_{0}%
	,b_{1},b_{2}\right)  \in\Omega_{1-0_+}$ define the parameter value%
	\[
	u\left(  b_{0},b_{1},b_{2}\right)  =\frac{\beta b_{2}}{1-b_{0}}%
	\]
	and evaluate the patch point%
	\begin{align}
	&  ~\widetilde{\mathbf{s}}_{i,\left(  j,k\right)  }\left(  b_{0},b_{1}%
	,b_{2}\right) \label{eq:interpolant_i_jk}\\
	=  &  ~\mathbf{p}_{i}\cdot B_{3,0}\left(  \beta\left(  1-b_{0}\right)  \right)
	\nonumber\\
	&  ~+\left(  \mathbf{p}_{i}+\widetilde{\lambda}_{i,\left(  j,k\right)
	}\left(  u\left(  b_{0},b_{1},b_{2}\right)  \right)  \cdot \mathbf{t}_{i,\left(
	j,k\right)  }\left(  u\left(  b_{0},b_{1},b_{2}\right)  \right)  \right)
\cdot B_{3,1}\left(  \beta\left(  1-b_{0}\right)  \right) \nonumber\\
&  ~+\left(  \mathbf{c}_{j,k}\left(  u\left(  b_{0},b_{1},b_{2}\right)
;\lambda_{i,j},\lambda_{j,i}\right)  +\widetilde{\lambda}_{\left(  j,k\right)
	,i}\left(  u\left(  b_{0},b_{1},b_{2}\right)  \right) \cdot \widetilde{\mathbf{t}%
}_{\left(  j,k\right)  ,i}\left(  u\left(  b_{0},b_{1},b_{2}\right)  \right)
\right)  \cdot B_{3,2}\left(  \beta\left(  1-b_{0}\right)  \right) \nonumber\\
&  ~+\mathbf{c}_{j,k}\left(  u\left(  b_{0},b_{1},b_{2}\right)  ;\lambda
_{i,j},\lambda_{j,i}\right)  \cdot B_{3,3}\left(  \beta\left(  1-b_{0}\right)
\right)  .\nonumber
\end{align}

\end{itemize}

The construction presented above can also be performed in case of boundary and
opposite vertex pairs $(  \mathbf{c}_{k,i}(  v;\lambda_{k,i},\allowbreak{}\lambda_{i,k})  ,\mathbf{p}_{j})  $ and $\left(  \mathbf{c}%
_{i,j}\left(  w;\lambda_{i,j},\lambda_{i,j}\right)  ,\mathbf{p}_{k}\right)  $,
obtaining the triangular surface patches%
\begin{align}
&  ~\widetilde{\mathbf{s}}_{j,\left(  k,i\right)  }\left(  b_{0},b_{1}%
,b_{2}\right) \label{eq:interpolant_j_ki}\\
=  &  ~\mathbf{p}_{j}\cdot B_{3,0}\left(  \beta\left(  1-b_{1}\right)  \right)
\nonumber\\
&  ~+\left(  \mathbf{p}_{j}+\widetilde{\lambda}_{j,\left(  k,i\right)
}\left(  v\left(  b_{0},b_{1},b_{2}\right)  \right)  \cdot \mathbf{t}_{j,\left(
k,i\right)  }\left(  v\left(  b_{0},b_{1},b_{2}\right)  \right)  \right)
\cdot B_{3,1}\left(  \beta\left(  1-b_{1}\right)  \right) \nonumber\\
&  ~+\left(  \mathbf{c}_{k,i}\left(  v\left(  b_{0},b_{1},b_{2}\right)
;\lambda_{k,i},\lambda_{i,k}\right)  +\widetilde{\lambda}_{\left(  k,i\right)
	,j}\left(  v\left(  b_{0},b_{1},b_{2}\right)  \right)  \cdot \widetilde{\mathbf{t}%
}_{\left(  k,i\right)  ,j}\left(  v\left(  b_{0},b_{1},b_{2}\right)  \right)
\right)  \cdot B_{3,2}\left(  \beta\left(  1-b_{1}\right)  \right) \nonumber\\
&  ~+\mathbf{c}_{k,i}\left(  v\left(  b_{0},b_{1},b_{2}\right)  ;\lambda
_{k,i},\lambda_{i,k}\right)  \cdot B_{3,3}\left(  \beta\left(  1-b_{1}\right)
\right)  \nonumber
\end{align}
and%
\begin{align}
&  ~\widetilde{\mathbf{s}}_{k,\left(  i,j\right)  }\left(  b_{0},b_{1}%
,b_{2}\right) \label{eq:interpolant_k_ij}\\
=  &  ~\mathbf{p}_{k}\cdot B_{3,0}\left(  \beta\left(  1-b_{2}\right)  \right)
\nonumber\\
&  ~+\left(  \mathbf{p}_{k}+\lambda_{k,\left(  i,j\right)  }\left(  w\left(
b_{0},b_{1},b_{2}\right)  \right)  \cdot \mathbf{t}_{k,\left(  i,j\right)  }\left(
w\left(  b_{0},b_{1},b_{2}\right)  \right)  \right)  \cdot B_{3,1}\left(
\beta\left(  1-b_{2}\right)  \right) \nonumber\\
&  ~+\left(  \mathbf{c}_{i,j}\left(  w\left(  b_{0},b_{1},b_{2}\right)
;\lambda_{i,j},\lambda_{j,i}\right)  +\widetilde{\lambda}_{\left(  i,j\right)
	,k}\left(  w\left(  b_{0},b_{1},b_{2}\right)  \right) \cdot \widetilde{\mathbf{t}%
}_{\left(  i,j\right)  ,k}\left(  w\left(  b_{0},b_{1},b_{2}\right)  \right)
\right)  \cdot B_{3,2}\left(  \beta\left(  1-b_{2}\right)  \right) \nonumber\\
&  ~+\mathbf{c}_{i,j}\left(  w\left(  b_{0},b_{1},b_{2}\right)  ;\lambda
_{i,j},\lambda_{j,i}\right)  \cdot B_{3,3}\left(  \beta\left(  1-b_{2}\right)
\right)  ,\nonumber
\end{align}
respectively, where%
\[
v\left(  b_{0},b_{1},b_{2}\right)  =\frac{\beta b_{0}}{1-b_{1}}
\]
and
\[
w\left(  b_{0},b_{1},b_{2}\right)  =\frac{\beta b_{1}}{1-b_{2}}%
\]
for all barycentric coordinates $\left(b_0,b_1,b_2\right)\in\Omega_{1-0_+}$.

Finally, the three side-vertex interpolants
(\ref{eq:interpolant_i_jk}), (\ref{eq:interpolant_j_ki}) and
(\ref{eq:interpolant_k_ij}) can convexly be blended to form a final quasi-optimal triangular
patch that both interpolates the boundary curves and at the same time produces
normal vector fields along these boundaries that are compatible with (i.e.,
point-wise parallel to) the initial corresponding  vector
fields of averaged unit normals used for the description of these interpolants. A commonly used convex
combination \citep{Nielson1979, Nielson1987} of the constructed interpolants is the surface%
\begin{equation}
\label{eq:blended_interpolant}
\begin{array}{rcl}
\widetilde{\mathbf{s}}_{i,j,k}\left(  b_{0},b_{1},b_{2}\right)  &=  &
\omega_{i,\left(  j,k\right)  }\left(  b_{0},b_{1},b_{2}\right)
\cdot\widetilde{\mathbf{s}}_{i,\left(  j,k\right)  }\left(  b_{0},b_{1}%
,b_{2}\right)  +
\omega_{j,\left(  k,i\right)  }\left(  b_{0},b_{1}%
,b_{2}\right)  \cdot\widetilde{\mathbf{s}}_{j,\left(  k,i\right)  }\left(
b_{0},b_{1},b_{2}\right) \\
& &+\omega_{k,\left(  i,j\right)  }\left(  b_{0},b_{1},b_{2}\right)
\cdot\widetilde{\mathbf{s}}_{k,\left(  i,j\right)  }\left(  b_{0},b_{1}%
,b_{2}\right)  ,~\left(  b_{0},b_{1},b_{2}\right)  \in\Omega_{1-0_+},
\end{array}
\end{equation}
where the weight functions $\omega_{i,\left(  j,k\right)  }$, $\omega
_{j,\left(  k,i\right)  }$ and $\omega_{k,\left(  i,j\right)  }$ can be
defined e.g. either as the rational functions
\begin{align*}
\omega_{i,\left(  j,k\right)  }\left(  b_{0},b_{1},b_{2}\right)   &  =
\frac{b_{1}^{2}b_{2}^{2}}{b_{0}^{2}b_{1}^{2}+b_{0}^{2}b_{2}^{2}+b_{1}^{2}b_{2}^{2}},\\
\omega_{j,\left(  k,i\right)  }\left(  b_{0},b_{1},b_{2}\right)   &  =
\frac{b_{0}^{2}b_{2}^{2}}{b_{0}^{2}b_{1}^{2}+b_{0}^{2}b_{2}^{2}+b_{1}^{2}b_{2}^{2}},\\
\omega_{k,\left(  i,j\right)  }\left(  b_{0},b_{1},b_{2}\right)   &  =
\frac{b_{0}^{2}b_{1}^{2}}{b_{0}^{2}b_{1}^{2}+b_{0}^{2}b_{2}^{2}+b_{1}^{2}b_{2}^{2}}
\end{align*}
or as their lower degree counterparts%
\begin{align*}
\omega_{i,\left(  j,k\right)  }\left(  b_{0},b_{1},b_{2}\right)   &  =
\frac{b_{1}b_{2}}{b_{0}b_{1}+b_{0}b_{2}+b_{1}b_{2}},\\
\omega_{j,\left(  k,i\right)  }\left(  b_{0},b_{1},b_{2}\right)   &  =
\frac{b_{0}b_{2}}{b_{0}b_{1}+b_{0}b_{2}+b_{1}b_{2}},\\
\omega_{k,\left(  i,j\right)  }\left(  b_{0},b_{1},b_{2}\right)   &  =
\frac{b_{0}b_{1}}{b_{0}b_{1}+b_{0}b_{2}+b_{1}b_{2}}.
\end{align*}
%
Based on the steps of the proposed method, cases (\textit{b}) and (\textit{c}) of Fig.\ \ref{fig:elephants} show the continuous reflection lines of several quasi-optimal Nielson-type transfinite triangular interpolants that form local interpolating visually smooth triangular spline surfaces that are rough approximations of an elephant. 

\begin{figure}[H]
	\centering
	\includegraphics{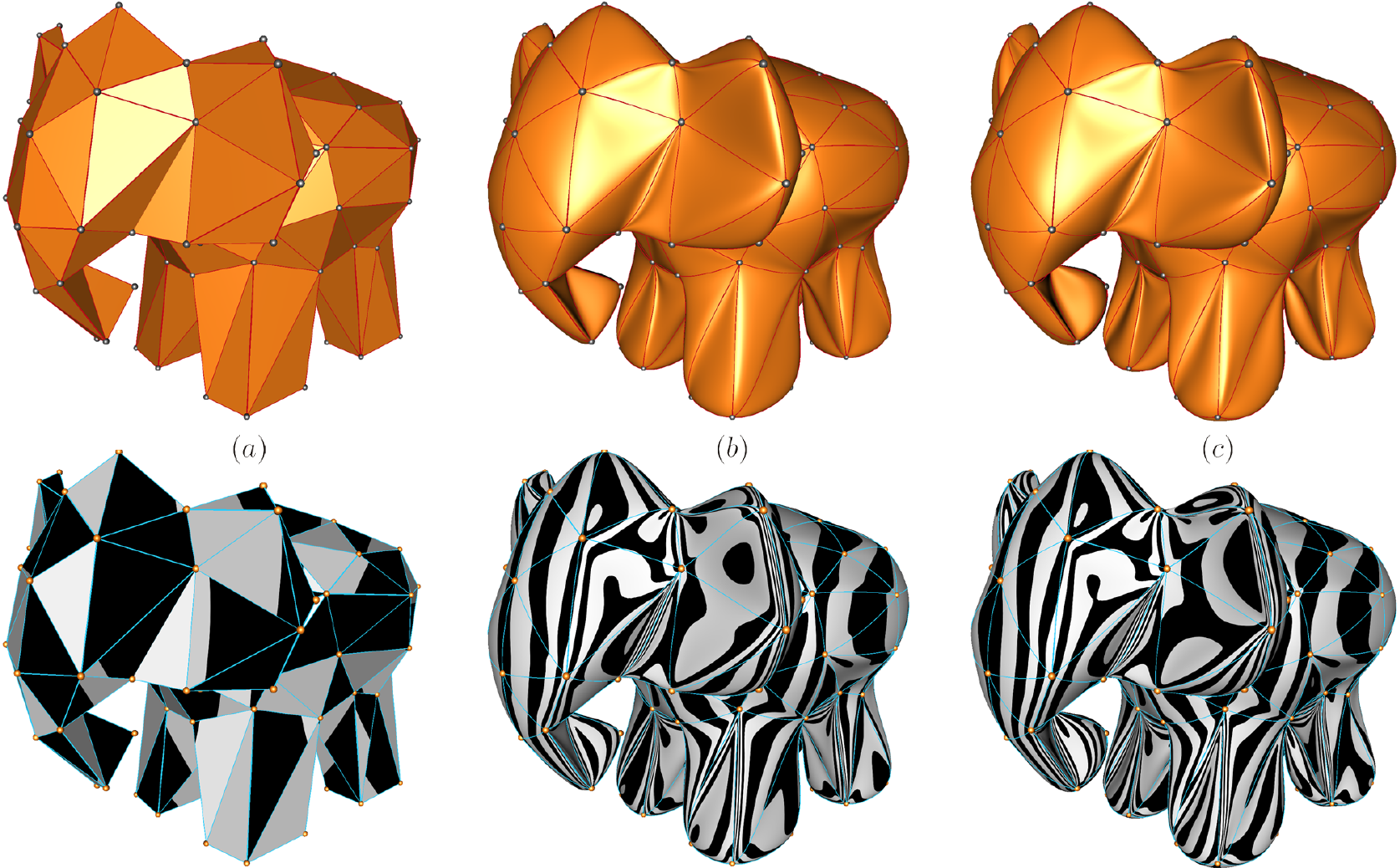}
	\caption{(\textit{a}) An input triangular mesh that resembles an elephant. Outputs (\textit{b}) and (\textit{c}) have continuous reflection lines and were obtained by using the cubic univariate and constrained trivariate Bernstein polynomials with parameter settings $\rho = \gamma = \theta_1 = \varepsilon_1 = 1$ and $\rho = \gamma = 2$, $\theta_1 = \theta_2 = \varepsilon_1 = \varepsilon_2 = 1$, respectively. ($n_v = 102$, $n_f=200$)}
	\label{fig:elephants}
\end{figure}

By applying non-polynomial univariate and constrained trivariate basis functions, one also gains possible shape (or tension) parameters that also affect the output of the proposed method as it is illustrated in cases (\textit{b}), (\textit{c}) and (\textit{d}) of Fig.\ \ref{fig:stellated_dodecahedron}.

\begin{figure}[H]
	\centering
	\includegraphics{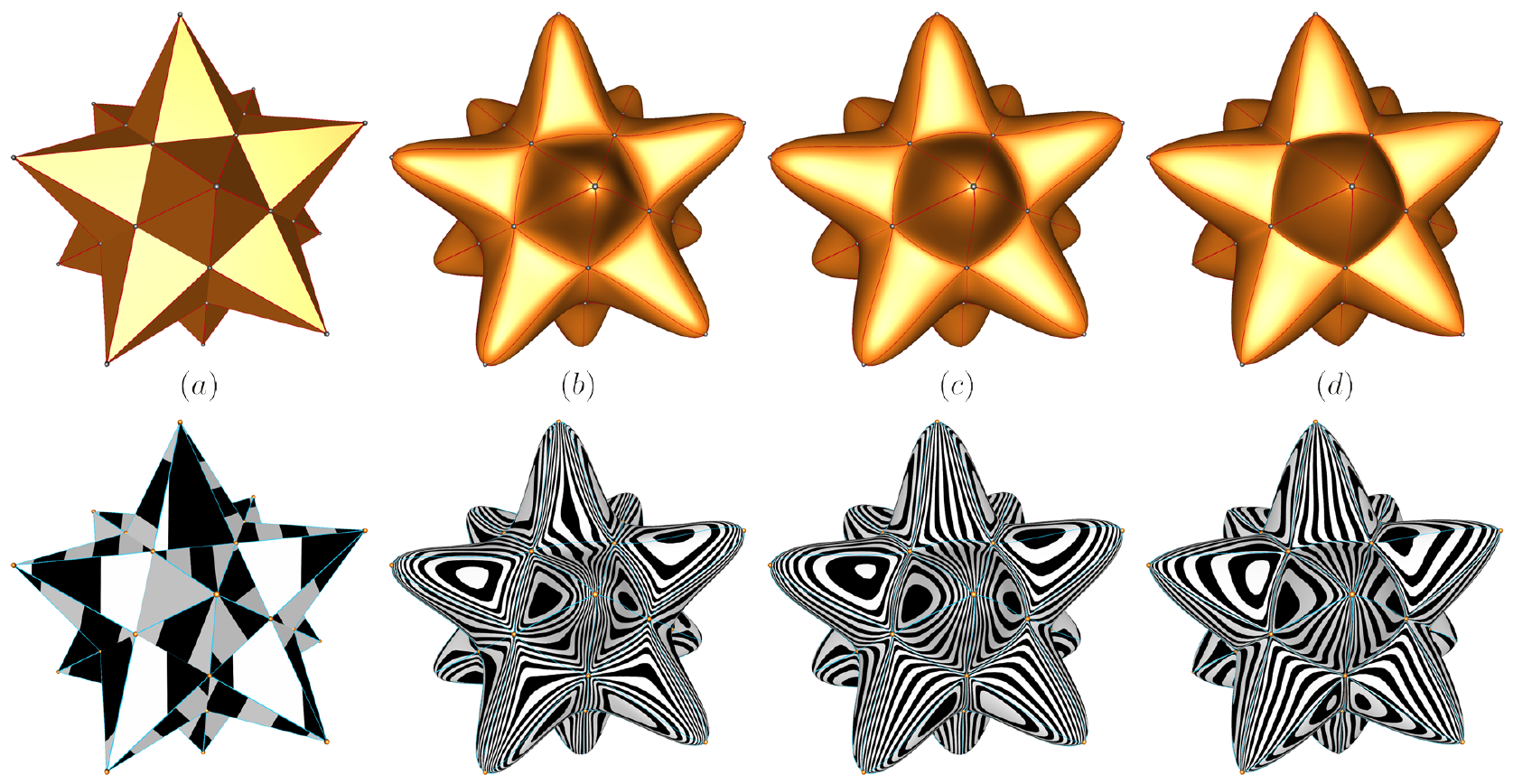}
	\caption{(\textit{a}) A stellated dodecahedron used as the input of the proposed method. Outputs (\textit{b}), (\textit{c}) and (\textit{d}) were obtained by applying the second order trigonometric bases detailed in Examples \ref{exmp:Sanchez} and \ref{exmp:trivariate_trigonometric} with shape (or tension) parameters $\beta = \frac{\pi}{2}$, $\beta = \frac{3\pi}{4}$ and $\beta = 3$, respectively. In case of all outputs we have minimized only the squared variation of first order (partial) derivatives, i.e., $\rho = \gamma = \theta_1 = \varepsilon_1 = 1$. ($n_v = 32$, $n_f=60$)}
	\label{fig:stellated_dodecahedron}
\end{figure}

Starting from the same input triangular mesh, but using different types of univariate and corresponding constrained trivariate basis functions, cases (\textit{b}), (\textit{c}) and (\textit{d}) of Fig.\ \ref{fig:bunnies} show the continuous reflection lines of slightly different $G^1$ continuous spline surfaces formed by quasi-optimal Nielson-type triangular transfinite interpolants.

\begin{figure}[H]
	\centering
	\includegraphics{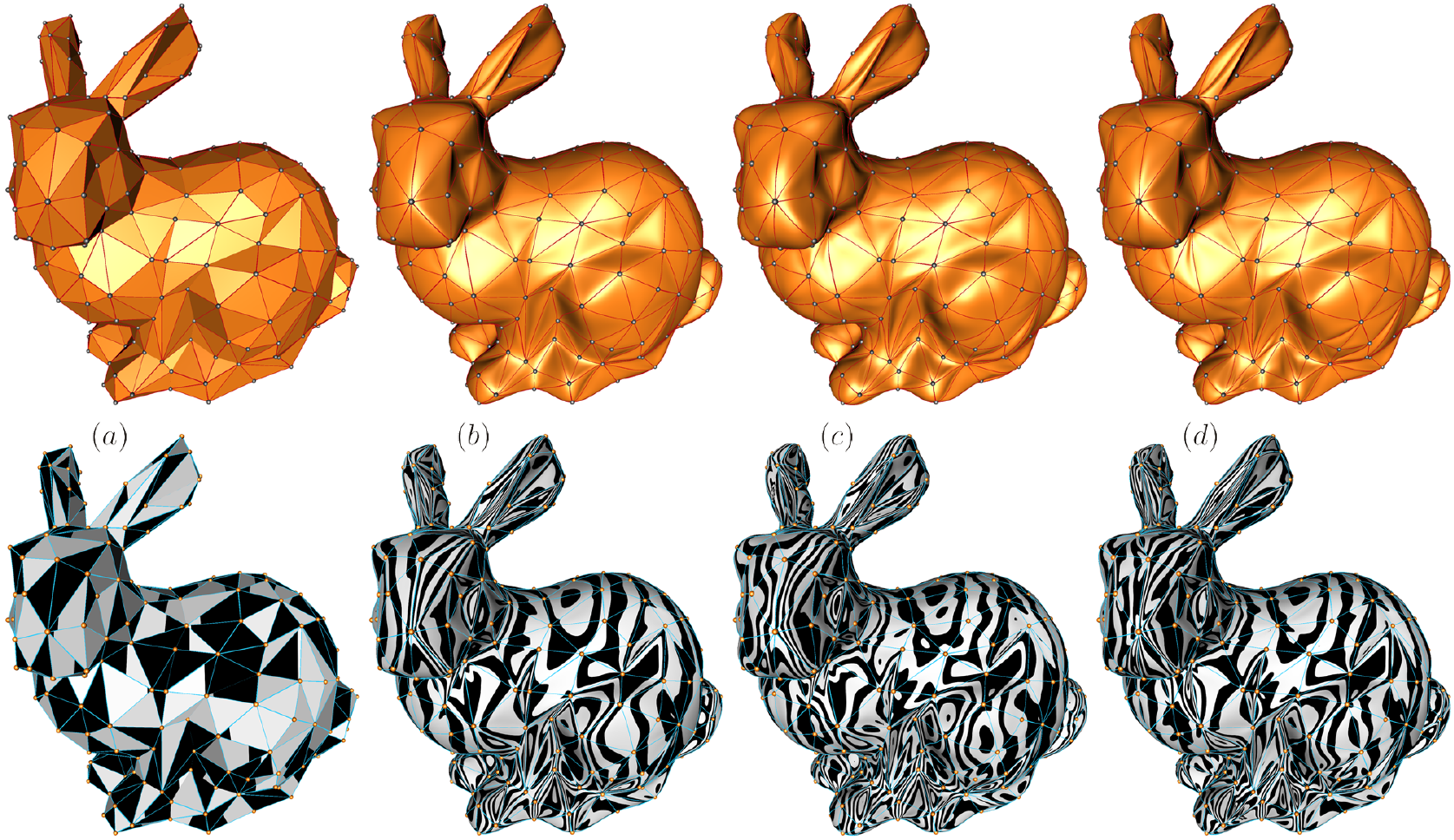}
	\caption{(\textit{a}) An input triangular mesh obtained from the Stanford bunny by means of quadric edge collapse decimation. In case of outputs (\textit{b}), (\textit{c}) and (\textit{d}) we have used the quartic polynomial ($\beta = 1$), the second order trigonometric ($\beta = \frac{3\pi}{4}$) and the first order algebraic-trigonometric ($\beta = \frac{3\pi}{4}$) univariate and constrained trivariate bases detailed in Examples \ref{exmp:Bernstein}/\ref{exmp:trivariate_Bernstein}, \ref{exmp:Sanchez}/\ref{exmp:trivariate_trigonometric} and \ref{exmp:algebraic_trigonometric}/\ref{exmp:trivariate_algebraic_trigonometric}, respectively. In case of all outputs we also used the common parameter settings $\rho = \gamma = 2$, $\theta_1 = \theta_2 = \varepsilon_1 = \varepsilon_2 = 1$. ($n_v = 276$, $n_f=536$)}
	\label{fig:bunnies}
\end{figure}

\vspace{-0.5cm}
Using univariate and constrained trivariate second order trigonometric basis functions, Fig.\ \ref{fig:horse_and_camel} presents further examples.

\begin{figure}[H]
	\centering
	\includegraphics{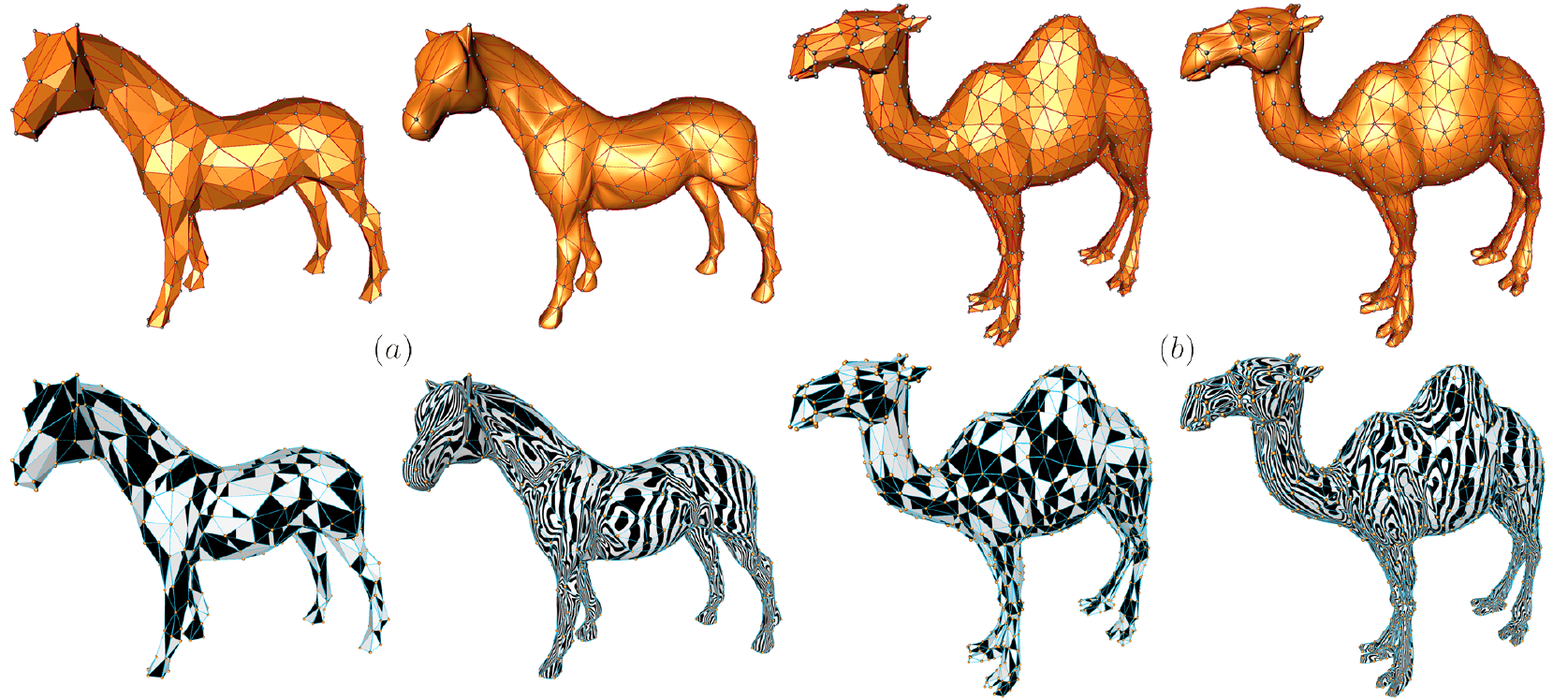}
	\caption{In both cases (\textit{a}) and (\textit{b}) we have used the same univariate and constrained trivariate second order trigonometric basis functions described in Examples \ref{exmp:Sanchez} and \ref{exmp:trivariate_trigonometric}, respectively. The remaining parameter settings were also the same, i.e., $\beta=\frac{\pi}{2}$, $\rho = \gamma = 2$, $\theta_1 = \theta_2 = \varepsilon_1 = \varepsilon_2 = 1$. In case (\textit{a}) $n_v = 341$ and $n_f = 678$, while in case  (\textit{b}) $n_v = 959$ and $n_f = 1914$.}
	\label{fig:horse_and_camel}
\end{figure}

\section{Final remarks}\label{sec:final_remarks}

Quadratic energy functionals like the first and higher order strain or thin-plate spline energies (\ref{eq:strain_energy}) and (\ref{eq:thin_plate_spline}), respectively, are well-known concepts in geometric modeling. In order to ensure more optimal shape preserving properties, we have expressed the strain energy in a slightly more general context by using the unique non-negative normalized B-bases of arbitrary (not necessarily polynomial) reflection invariant EC spaces that also comprise the constants,  while in case of the thin-plate spline energy we have assumed the existence of constrained trivariate non-negative normalized basis functions that are compatible with (i.e., degenerate to) those univariate basis functions along the edges of their triangular definition domain that were constructed by means of non-negative normalized B-basis functions. Concerning the minimization of these energies, we have shown in general that their unique optimum points always exist and we have also provided explicit closed formulas like (\ref{eq:solution}) or (\ref{eq:solution_surface}) for their evaluation. As possible practical applications, we have used special univariate and constrained trivariate polynomial, trigonometric, hyperbolic and algebraic-trigonometric normalized non-negative basis functions. Following these examples, one can easily define new variants of the proposed algorithm. 

The single input of the proposed $G^1$ continuous local interpolating quasi-optimal triangular spline surface modeling tool is a triangular mesh stored in a half-edge data structure. In order to construct a network of piecewise optimal arcs that locally interpolate the vertices of the given mesh, strain energy functionals were locally optimized along the input edges. The obtained network of curves is formed by curvilinear triangles that were fitted by local interpolating piecewise optimal triangular surface patches that were obtained by locally minimizing thin-plate spline energies. In general, these triangular surface patches are only $C^0$ continuous along the common boundary curves, but they can be used to define  continuous quasi-optimal vector fields of averaged unit normals along the shared boundaries. Thus, we have arrived to the last step of our algorithm, namely we generalize the concept of the visually smooth transfinite triangular interpolant of \citep{Nielson1987} by imposing further optimality constraints concerning the isoparametric lines of those groups of three side-vertex triangular interpolants that have to be convexly blended in order to generate the final visually smooth local interpolating  quasi-optimal triangular spline surface.  

After processing the connectivity information stored in the input triangular mesh and creating lookup tables with the values of univariate/double integrals listed in Appendices \ref{app:univariate_integrals}/\ref{app:double_integrals}, all intermediate steps of the proposed triangular spline surface generation technique are highly parallelizable and can efficiently be evaluated by means of explicit formulas (\ref{eq:solution}), (\ref{eq:solution_surface}), (\ref{eq:vector_field_of_averaged_normals})/(\ref{eq:inherited_vector_fieldnormals}), (\ref{eq:point-wise_solution}), (\ref{eq:interpolant_i_jk})--(\ref{eq:interpolant_k_ij}) and (\ref{eq:blended_interpolant}). 

\vspace{-0.35cm}
\begin{small}
\setlength{\bibsep}{0pt plus 0.3ex}

\end{small}
\clearpage{}
\appendix

\renewcommand*{\thesection}{\Alph{section}}

\section{Integrals of univariate expressions}\label{app:univariate_integrals}

Potentially omitted values in this section either are equal to zero or do not affect the output of the proposed method.

\subsection{Integrals of univariate polynomial expressions}

The values listed in Sections \ref{app:cubic_Bernstein_r_1}--\ref{app:cubic_Bernstein_r_2} belong to the univariate cubic Bernstein polynomials detailed in Example \ref{exmp:Bernstein}.

\subsubsection{Variations of products of first order derivatives}\label{app:cubic_Bernstein_r_1}

\begin{footnotesize}
	\[
	\varphi_{0,1}^1 = \varphi_{2,3}^1 = -\frac{9}{10},~~ 
	\varphi_{0,2}^1 = \varphi_{1, 3}^1 = -\frac{3}{5},~~ 
	\varphi_{1,1}^1 = \varphi_{2,2}^1 = \frac{6}{5},~~ 
	\varphi_{1,2}^1 = \frac{3}{10}.
	\]
\end{footnotesize}

\subsubsection{Variations of products of second order derivatives}\label{app:cubic_Bernstein_r_2}

\begin{footnotesize}
	\[
	\varphi_{0,1}^2 = \varphi_{1,2}^2 = \varphi_{2,3}^2 =  -18,~~
	\varphi_{0,2}^2 = \varphi_{1, 3}^2 = 0,~~
	\varphi_{1,1}^2 = \varphi_{2,2}^2 = 36.
	\]
\end{footnotesize}

\noindent{}The values listed in Sections \ref{app:quartic_Bernstein_r_1}--\ref{app:quartic_Bernstein_r_2} correspond to the univariate quartic polynomial basis detailed in Example \ref{exmp:Bernstein}.

\subsubsection{Variations of products of first order derivatives}\label{app:quartic_Bernstein_r_1}

\begin{footnotesize}
	\[
	\varphi_{0,1}^1 = \varphi_{2,3}^1 = -\frac{52}{35},~~ 
	\varphi_{0,2}^1 = \varphi_{1, 3}^1 = -\frac{24}{35},~~ 
	\varphi_{1,1}^1 = \varphi_{2,2}^1 = \frac{66}{35},~~ 
	\varphi_{1,2}^1 = \frac{2}{7}.
	\]
\end{footnotesize}

\subsubsection{Variations of products of second order derivatives}\label{app:quartic_Bernstein_r_2}

\begin{footnotesize}
	\[
	\varphi_{0,1}^2 = \varphi_{2,3}^2 = -\frac{204}{5},~~
	\varphi_{0,2}^2 = \varphi_{1, 3}^2 = \frac{36}{5},~~
	\varphi_{1,1}^2 = \varphi_{2,2}^2 = \frac{324}{5},~~
	\varphi_{1,2}^2 = -\frac{156}{5}.
	\]
\end{footnotesize}

\subsection{Integrals of univariate trigonometric expressions}

Let $\beta\in\left(  0,\pi\right)  $ an 
arbitrarily fixed parameter and consider the univariate trigonometric basis functions introduced in Example \ref{exmp:Sanchez}. The corresponding variations of products of first and second order derivatives are listed in Sections \ref{app:trigonometric_r_1} and \ref{app:trigonometric_r_2}, respectively.%

\subsubsection{Variations of products of first order derivatives}\label{app:trigonometric_r_1}

\begin{footnotesize}
	\begin{align*}
	\varphi_{0,1}^{1}  &  =\varphi_{2,3}^{1}={\frac{\left(  24+4\cos\left(
			\beta\right)  -18\cos^{2}\left(  \beta\right)  +5\cos^{3}\left(  \beta\right)
			\right)  \sin\left(  \beta\right)  -3\beta\left(  6+2\cos\left(  \beta\right)
			-3\cos^{2}\left(  \beta\right)  \right)  }{96\sin^{8}\left(  \frac{\beta}%
			{2}\right)  },}\\
	\varphi_{0,2}^{1}  &  =\varphi_{1,3}^{1}={\frac{\left(  -12+24\cos\left(
			\beta\right)  +2\cos^{2}\left(  \beta\right)  +\cos^{3}\left(  \beta\right)
			\right)  \sin\left(  \beta\right)  +3\beta\left(  2-2\cos\left(  \beta\right)
			-5\cos^{2}\left(  \beta\right)  \right)  }{96\sin^{8}\left(  \frac{\beta}%
			{2}\right)  },}\\
	\varphi_{1,1}^{1}  &  =\varphi_{2,2}^{1}={\frac{\left(  -32-12\cos\left(
			\beta\right)  +34\cos^{2}\left(  \beta\right)  -5\cos^{3}\left(  \beta\right)
			\right)  \sin\left(  \beta\right)  +3\beta\left(  8+4\cos\left(  \beta\right)
			-5\cos^{2}\left(  \beta\right)  -2\cos^{3}\left(  \beta\right)  \right)
		}{6\left(  1-4\,\cos\left(  \beta\right)  +6\,\cos^{2}\left(  \beta\right)
		-4\,\cos^{3}\left(  \beta\right)  +\cos^{4}\left(  \beta\right)  \right)  }%
	,}\\
\varphi_{1,2}^{1}  &  ={\frac{\left(  20-16\cos\left(  \beta\right)
		-18\cos^{2}\left(  \beta\right)  -\cos^{3}\left(  \beta\right)  \right)
		\sin\left(  \beta\right)  -3\beta\left(  4-7\cos^{2}\left(  \beta\right)
		-2\cos^{3}\left(  \beta\right)  \right)  }{6\left(  1-4\,\cos\left(
		\beta\right)  +6\,\cos^{2}\left(  \beta\right)  -4\,\cos^{3}\left(
		\beta\right)  +\cos^{4}\left(  \beta\right)  \right)  }}.
\end{align*}
\end{footnotesize}

\subsubsection{Variations of products of second order derivatives}\label{app:trigonometric_r_2}

\begin{footnotesize}
	\begin{align*}
	\varphi_{0,1}^{2}&=\varphi_{2,3}^{2}={\frac{\left(  12+5\cos\left(
			\beta\right) +9\cos^{2}\left(  \beta\right) -14\cos^{3}\left(  \beta\right)  
			\right)  \sin\left(  \beta\right)  -3\beta\left(  6+\cos\left(  \beta\right)
			-3\cos^{2}\left(  \beta\right)  \right)  }{3\left(  1-4\,\cos\left(
			\beta\right)  +6\cos^{2}\left(  \beta\right)  -4\cos^{3}\left(  \beta\right)
			+\cos^{4}\left(  \beta\right)  \right)  }},
	\\
	\varphi_{0,2}^{2}&=\varphi_{1,3}^{2}={\frac{\left(  -18+21\cos\left(
			\beta\right)  +7\cos^{2}\left(  \beta\right)  +2\cos^{3}\left(  \beta\right)
			\right)  \sin\left(  \beta\right)  +3\beta\left(  4-\cos\left(  \beta\right)
			-7\cos^{2}\left(  \beta\right)  \right)  }{3\left(  1-4\,\cos\left(
			\beta\right)  +6\cos^{2}\left(  \beta\right)  -4\cos^{3}\left(  \beta\right)
			+\cos^{4}\left(  \beta\right)  \right)  }},
	\\
	\varphi_{1,1}^{2}&=\varphi_{2,2}^{2}={\frac{\left(  -16-24\cos\left(
			\beta\right)  +11\cos^{2}\left(  \beta\right)  +17\cos^{3}\left(
			\beta\right)  \right)  \sin\left(  \beta\right)  +3\beta\left(  10-7\cos
			^{2}\left(  \beta\right)  +2\cos\left(  \beta\right)  -\cos^{3}\left(
			\beta\right)  \right)  }{3\left(  1-4\,\cos\left(  \beta\right)  +6\cos
			^{2}\left(  \beta\right)  -4\cos^{3}\left(  \beta\right)  +\cos^{4}\left(
			\beta\right)  \right)  }},
	\\
	\varphi_{1,2}^{2}&={\frac{\left(  22-2\cos\left(  \beta\right)  -27\cos
			^{2}\left(  \beta\right)  -5\cos^{3}\left(  \beta\right)  \right)  \sin\left(
			\beta\right)  -3\beta\left(  8-11\cos^{2}\left(  \beta\right)  -\cos
			^{3}\left(  \beta\right)  \right)  }{3\left(  1-4\,\cos\left(  \beta\right)
			+6\cos^{2}\left(  \beta\right)  -4\cos^{3}\left(  \beta\right)  +\cos
			^{4}\left(  \beta\right)  \right)  }}.
	\end{align*}
\end{footnotesize}

\subsection{Integrals of univariate hyperbolic expressions}

Let $\beta>0$ be an arbitrarily fixed tension parameter. Using the hyperbolic basis functions presented in Example \ref{exmp:hyperbolic}, the variations of products of first and second order derivatives are listed in Sections \ref{app:hyperbolic_r_1} and \ref{app:hyperbolic_r_2}, respectively.

\subsubsection{Variations of products of first order derivatives}\label{app:hyperbolic_r_1}

\begin{footnotesize}
	\begin{align*}
	\varphi_{0,1}^{1}   =\varphi_{2,3}^{1}=&~\frac{48\beta\left(  6+2\cosh\left(
		\beta\right)  -3\cosh^{2}\left(  \beta\right)  \right)  -312\sinh\left(
		\beta\right)  -52\sinh\left(  2\beta\right)  +72\sinh\left(  3\beta\right)
		-10\sinh\left(  4\beta\right)  }{1536\sinh^{8}\left(  \frac{\beta}{2}\right)
	},\\
	\varphi_{0,2}^{1}   =\varphi_{1,3}^{1}=&~\frac{-48\beta\left(  2-2\cosh\left(
		\beta\right)  -5\cosh^{2}\left(  \beta\right)  \right)  +184\sinh\left(
		\beta\right)  -196\sinh\left(  2\beta\right)  -8\sinh\left(  3\beta\right)
		-2\sinh\left(  4\beta\right)  }{1536\sinh^{8}\left(  \frac{\beta}{2}\right)
	},\\
	\varphi_{1,1}^{1}   =\varphi_{2,2}^{1}=&~\frac{-48\beta\left(  8+4\cosh\left(
		\beta\right)  -5\cosh^{2}\left(  \beta\right)  -2\cosh^{3}\left(
		\beta\right)  \right)  +376\sinh\left(  \beta\right)  +116\sinh\left(
		2\beta\right)  -136\sinh\left(  3\beta\right)  +10\sinh\left(  4\beta\right)
	}{96\left(  1-4\cosh\left(  \beta\right)  +6\cosh^{2}\left(  \beta\right)
	-4\cosh^{3}\left(  \beta\right)  +\cosh^{4}\left(  \beta\right)  \right)  },\\
\varphi_{1,2}^{1}   =&~{\frac{48\beta\left(  4-7\cosh^{2}\left(  \beta\right)
		-2\cosh^{3}\left(  \beta\right)  \right)  -248\mathrm{\sinh}\left(
		\beta\right)  +132\sinh\left(  2\beta\right)  +72\sinh\left(  3\beta\right)
		+2\sinh\left(  4\beta\right)  }{96\left(  1-4\cosh\left(  \beta\right)
		+6\cosh^{2}\left(  \beta\right)  -4\cosh^{3}\left(  \beta\right)  +\cosh
		^{4}\left(  \beta\right)  \right)  }}.
\end{align*}
\end{footnotesize}

\subsubsection{Variations of products of second order derivatives}\label{app:hyperbolic_r_2}

\begin{footnotesize}
\begin{align*}
\varphi_{0,1}^{2}   =\varphi_{2,3}^{2}=&~{\frac{-24\beta\left(  6+\cosh\left(
		\beta\right)  -3\cosh^{2}\left(  \beta\right)  \right)  +114\sinh\left(
		\beta\right)  -8\sinh\left(  2\beta\right)  +18\sinh\left(  3\beta\right)
		-14\sinh\left(  4\beta\right)  }{24\left(  1-4\cosh\left(  \beta\right)
		+6\cosh^{2}\left(  \beta\right)  -4\cosh^{3}\left(  \beta\right)  +\cosh
		^{4}\left(  \beta\right)  \right)  },}\\
\varphi_{0,2}^{2}   =\varphi_{1,3}^{2}=&~\frac{24\beta\left(  4-\cosh\left(
	\beta\right)  -7\cosh^{2}\left(  \beta\right)  \right)  -130\sinh\left(
	\beta\right)  +88\sinh\left(  2\beta\right)  +14\sinh\left(  3\beta\right)
	+2\sinh\left(  4\beta\right)  }{24\left(  1-4\cosh\left(  \beta\right)
	+6\cosh^{2}\left(  \beta\right)  -4\cosh^{3}\left(  \beta\right)  +\cosh
	^{4}\left(  \beta\right)  \right)  },\\
\varphi_{1,1}^{2}   =\varphi_{2,2}^{2}=&~\frac{48\beta\left(  10+2\cosh\left(
	\beta\right)  -7\cosh^{2}\left(  \beta\right)  -\cosh^{3}\left(  \beta\right)
	\right)  -212\sinh\left(  \beta\right)  -124\sinh\left(  2\beta\right)
	+44\sinh\left(  3\beta\right)  +34\sinh\left(  4\beta\right)  }{48\left(
	1-4\cosh\left(  \beta\right)  +6\cosh^{2}\left(  \beta\right)  -4\cosh
	^{3}\left(  \beta\right)  +\cosh^{4}\left(  \beta\right)  \right)  },\\
\varphi_{1,2}^{2}  &~ =\frac{-48\beta\left(  8-11\cosh^{2}\left(  \beta\right)
	-\cosh^{3}\left(  \beta\right)  \right)  +244\sinh\left(  \beta\right)
	-36\sinh\left(  2\beta\right)  -108\sinh\left(  3\beta\right)  -10\sinh\left(
	4\beta\right)  }{48\left(  1-4\cosh\left(  \beta\right)  +6\cosh^{2}\left(
	\beta\right)  -4\cosh^{3}\left(  \beta\right)  +\cosh^{4}\left(  \beta\right)
	\right)  }.
\end{align*}
\end{footnotesize}

\subsection{Integrals of univariate algebraic-trigonometric expressions}

Let $\beta \in \left(0, 2\pi\right)$ a fixed tension parameter and consider the univariate algebraic-trigonometric basis functions described in Example \ref{exmp:algebraic_trigonometric}. In this case, the variations of products of the first and second order derivatives are listed in Sections \ref{app:algebraic_trigonometric_r_1} and \ref{app:algebraic_trigonometric_r_2}, respectively.

\subsubsection{Variations of products of first order derivatives}\label{app:algebraic_trigonometric_r_1}

\begin{footnotesize}
	\begin{align*}
	\varphi_{0,1}^{1}=\varphi_{2,3}^{1}= &  ~\frac{\left(  -3\beta+6\sin\left(
		\beta\right)  +2\beta\cos\left(  \beta\right)  -3\sin\left(  2\beta\right)
		+\beta\cos\left(  2\beta\right)  \right)  \sin\left(  \beta\right)  }{4\left(
		2\sin\left(  \beta\right)  -\beta-\beta\cos\left(  \beta\right)  \right)
		\left(  \beta-\sin\left(  \beta\right)  \right)  ^{2}},\\
	\varphi_{0,2}^{1}=\varphi_{1,3}^{1}= &  ~\frac{\left(  -\beta-\sin\left(
		\beta\right)  +\beta^{2}\sin\left(  \beta\right)  +\beta\cos\left(
		\beta\right)  +\cos\left(  \beta\right)  \sin\left(  \beta\right)  \right)
		\sin\left(  \beta\right)  }{2\left(  2\sin\left(  \beta\right)  -\beta
		-\beta\cos\left(  \beta\right)  \right)  \left(  \beta-\sin\left(
		\beta\right)  \right)  ^{2}},\\
	\varphi_{1,1}^{1}=\varphi_{2,2}^{1}= &  ~\frac{\left(  2\beta^{3}-4\sin\left(
		\beta\right)  -4\beta^{2}\sin\left(  \beta\right)  +4\beta\cos\left(  \beta\right)
		+2\sin\left(  2\beta\right)  -\beta^{2}\sin\left(  2\beta\right)  -4\beta
		\cos\left(  2\beta\right)  \right)  \sin^{2}\left(  \beta\right)  }{4\left(
		2\sin\left(  \beta\right)  -\beta-\beta\cos\left(  \beta\right)  \right)
		^{2}\left(  \beta-\sin\left(  \beta\right)  \right)  ^{2}},\\
	\varphi_{1,2}^{1}   =&~\frac{\left(  6\beta-2\sin\left(  \beta\right)
		-3\beta^{2}\sin\left(  \beta\right)  -6\beta\cos\left(  \beta\right)
		+\beta^{3}\cos\left(  \beta\right)  +\sin\left(  2\beta\right)  \right)
		\sin^{2}\left(  \beta\right)  }{2\left(  2\sin\left(  \beta\right)
		-\beta-\beta\cos\left(  \beta\right)  \right)  ^{2}\left(  \beta-\sin\left(
		\beta\right)  \right)  ^{2}}\\
	\end{align*}
\end{footnotesize}

\subsubsection{Variations of products of second order derivatives}\label{app:algebraic_trigonometric_r_2}

\begin{footnotesize}
	\begin{align*}
	\varphi_{0,1}^{2}=\varphi_{2,3}^{2}= &  ~\frac{\left(  -\beta-2\sin\left(
		\beta\right)  +2\beta\cos\left(  \beta\right)  +\sin\left(  2\beta\right)
		-\beta\cos\left(  2\beta\right)  \right)  \sin\left(  \beta\right)  }{4\left(
		2\sin\left(  \beta\right)  -\beta-\beta\cos\left(  \beta\right)  \right)
		\left(  \beta-\sin\left(  \beta\right)  \right)  ^{2}},\\
	\varphi_{0,2}^{2}=\varphi_{1,3}^{2}= &  ~\frac{\left(  -\beta-\sin\left(
		\beta\right)  +\beta^{2}\sin\left(  \beta\right)  +\beta\cos\left(
		\beta\right)  +\cos\left(  \beta\right)  \sin\left(  \beta\right)  \right)
		\sin\left(  \beta\right)  }{2\left(  2\sin\left(  \beta\right)  -\beta
		-\beta\cos\left(  \beta\right)  \right)  \left(  \beta-\sin\left(
		\beta\right)  \right)  ^{2}},\\
	\varphi_{1,1}^{2}=\varphi_{2,2}^{2}= &  ~\frac{\left(  2\beta+2\beta^{3}%
		+4\sin\left(  \beta\right)  -4\beta^{2}\sin\left(  \beta\right)  -4\beta
		\cos\left(  \beta\right)  -2\sin\left(  2\beta\right)  +\beta^{2}\sin\left(
		2\beta\right)  +2\beta\cos\left(  2\beta\right)  \right)  \sin^{2}\left(
		\beta\right)  }{4\left(  2\sin\left(  \beta\right)  -\beta-\beta\cos\left(
		\beta\right)  \right)  ^{2}\left(  \beta-\sin\left(  \beta\right)  \right)
		^{2}},\\
	\varphi_{1,2}^{2}= &  ~\frac{\left(  \beta+2\sin\left(  \beta\right)
		-\beta^{2}\sin\left(  \beta\right)  -2\beta\cos\left(  \beta\right)
		+\beta^{3}\cos\left(  \beta\right)  -\sin\left(  2\beta\right)  +\beta
		\cos\left(  2\beta\right)  \right)  \sin^{2}\left(  \beta\right)  }{2\left(
		2\sin\left(  \beta\right)  -\beta-\beta\cos\left(  \beta\right)  \right)
		^{2}\left(  \beta-\sin\left(  \beta\right)  \right)  ^{2}}.
	\end{align*}
\end{footnotesize}

\section{Double integrals of constrained trivariate expressions}\label{app:double_integrals}

Potentially omitted values in this section either are equal to zero or do not affect the output of the proposed method.

\subsection{Double integrals of constrained trivariate  polynomial expressions}\label{app:polynomial}

The values listed in Sections \ref{app:cubic_Bernstein_g_1}--\ref{app:cubic_Bernstein_g_2} correspond to the constrained trivariate cubic Bernstein polynomials detailed in Example \ref{exmp:trivariate_Bernstein}.

\subsubsection{Variations of products of first order partial derivatives}\label{app:cubic_Bernstein_g_1}

\begin{footnotesize}
	\begin{align*}
	\tau_{0,0,3}^{0,1}&=
	\tau_{0,0,3}^{1,0}=
	\tau_{0,1,2}^{1,0}=
	\tau_{0,3,0}^{0,1}=
	\tau_{1,0,2}^{0,1}=
	\tau_{1,2,0}^{0,1}=
	\tau_{2,1,0}^{1,0}=
	\tau_{3,0,0}^{1,0}=-\frac{1}{10},\\
	\tau_{0,1,2}^{0,1}&=\tau_{0,2,1}^{0,1}=\tau_{1,0,2}^{1,0}=\tau_{2,0,1}^{1,0}=\frac{1}{10},\\
	\tau_{1,1,1}^{0,1}&=\tau_{1,1,1}^{1,0}=\frac{1}{5}.
	\end{align*}
\end{footnotesize}

\subsubsection{Variations of products of second order partial derivatives}\label{app:cubic_Bernstein_g_2}

\begin{footnotesize}
	\begin{align*}
     \tau^{0,2}_{0,0,3} &= \tau^{2,0}_{0,0,3} = \tau^{1,1}_{0,1,2} = \tau^{0,2}_{0,3,0} = \tau^{1,1}_{1,0,2} = \tau^{1,1}_{1,2,0} = \tau^{1,1}_{2,1,0} = \tau^{2,0}_{3,0,0} = -3,
     \\
     \tau^{1,1}_{0,0,3} &=  \tau^{2,0}_{0,2,1} = \tau^{1,1}_{0,3,0} = \tau^{2,0}_{0,3,0} = \tau^{2,0}_{1,2,0} = \tau^{0,2}_{2,0,1} = \tau^{0,2}_{2,1,0} = \tau^{0,2}_{3,0,0} = \tau^{1,1}_{3,0,0} = 0,
     \\
     \tau^{0,2}_{0,1,2} &= \tau^{0,2}_{0,2,1} = \tau^{2,0}_{1,0,2} = \tau^{1,1}_{0,2,1} = \tau^{1,1}_{2,0,1} = \tau^{2,0}_{2,0,1} = 3,
     \\
     \tau^{2,0}_{0,1,2} &= \tau^{0,2}_{1,0,2} = \tau^{0,2}_{1,2,0} = \tau^{2,0}_{2,1,0} = -6,
     \\
     \tau^{0,2}_{1,1,1} &= \tau^{2,0}_{1,1,1} =  12,
     \\
     \tau^{1,1}_{1,1,1} &=   6.
	\end{align*}
\end{footnotesize}

\noindent{}The values listed in Sections \ref{app:quartic_Bernstein_g_1}--\ref{app:quartic_Bernstein_g_2} are related to the constrained trivariate quartic polynomial basis functions detailed in Example \ref{exmp:trivariate_Bernstein}.

\subsubsection{Variations of products of first order partial derivatives}\label{app:quartic_Bernstein_g_1}

\begin{footnotesize}
\begin{align*}
\tau_{0,0,3}^{0,1}  &  =\tau_{0,0,3}^{1,0}=\tau_{0,3,0}^{0,1}=\tau
_{3,0,0}^{1,0}=-{\frac{6}{35},}\\
\tau_{0,1,2}^{0,1}  &  =\tau_{0,2,1}^{0,1}=\tau_{1,0,2}^{1,0}=\tau
_{2,0,1}^{1,0}={\frac{6}{35},}\\
\tau_{0,1,2}^{1,0}  &  =\tau_{1,0,2}^{0,1}=\tau_{1,2,0}^{0,1}=\tau
_{2,1,0}^{1,0}=-{\frac{11}{35},}\\
\tau_{0,2,1}^{1,0}  &  =\tau_{1,2,0}^{1,0}=\tau_{2,0,1}^{0,1}=\tau
_{2,1,0}^{0,1}=-{\frac{3}{35},}\\
\tau_{0,3,0}^{1,0}  &  =\tau_{3,0,0}^{0,1}=0,\\
\tau_{1,1,1}^{0,1}  &  =\tau_{1,1,1}^{1,0}=4/5.
\end{align*}
\end{footnotesize}

\subsubsection{Variations of products of second order partial derivatives}\label{app:quartic_Bernstein_g_2}

\begin{footnotesize}
\begin{align*}
\tau_{0,0,3}^{0,2}  & =\tau_{0,0,3}^{2,0}=\tau_{0,3,0}^{0,2}=\tau
_{3,0,0}^{2,0}=-{\frac{24}{5},}\\
\tau_{0,0,3}^{1,1}  & ={\frac{12}{5},}\\
\tau_{0,1,2}^{0,2}  & =\tau_{0,2,1}^{0,2}=\tau_{0,2,1}^{1,1}=\tau
_{1,0,2}^{2,0}=\tau_{2,0,1}^{1,1}=\tau_{2,0,1}^{2,0}={\frac{24}{5},}\\
\tau_{0,1,2}^{1,1}  & =\tau_{0,2,1}^{2,0}=\tau_{1,0,2}^{1,1}=\tau
_{1,2,0}^{2,0}=\tau_{2,0,1}^{0,2}=\tau_{2,1,0}^{0,2}=-{\frac{36}{5},}\\
\tau_{0,1,2}^{2,0}  & =\tau_{1,0,2}^{0,2}=\tau_{1,2,0}^{0,2}=\tau
_{2,1,0}^{2,0}=-{\frac{84}{5},}\\
\tau_{0,3,0}^{1,1}  & =\tau_{0,3,0}^{2,0}=\tau_{3,0,0}^{0,2}=\tau
_{3,0,0}^{1,1}=0,\\
\tau_{1,1,1}^{0,2}  & =2\tau_{1,1,1}^{1,1}=\tau_{1,1,1}^{2,0}=48,\\
\tau_{1,2,0}^{1,1}  & =\tau_{2,1,0}^{1,1}=-{\frac{54}{5}}.
\end{align*}
\end{footnotesize}

\subsection{Double integrals of constrained trivariate trigonometric expressions}\label{app:trigonometric}

Using the design parameter $\beta \in \left(0,\pi\right)$, the values of Sections \ref{app:trigonometric_g_1}--\ref{app:trigonometric_g_2} correspond to the constrained trivariate trigonometric basis functions detailed in Example \ref{exmp:trivariate_trigonometric}.

\subsubsection{Variations of products of first order partial derivatives}\label{app:trigonometric_g_1}

\begin{footnotesize}%
	\begin{align*}
	&  ~\tau_{0,0,3}^{0,1}=\tau_{0,0,3}^{1,0}=\tau_{0,3,0}^{0,1}=\tau
	_{3,0,0}^{1,0}=\\
	= &  ~\frac{1}{1152\sin^{8}\left(  \frac{\beta}{2}\right)  }\left(
	384-81{\beta}^{2}-2330\cos^{2}\left(  \frac{\beta}{2}\right)  +288{\beta}%
	^{2}\cos^{2}\left(  \frac{\beta}{2}\right)  +1770\cos^{4}\left(  \frac{\beta
	}{2}\right)  -180{\beta}^{2}\cos^{4}\left(  \frac{\beta}{2}\right)  \right.
	\\
	&  ~\left.  +312\cos^{6}\left(  \frac{\beta}{2}\right)  -136\cos^{8}\left(
	\frac{\beta}{2}\right)  -3\beta\cos\left(  \frac{\beta}{2}\right)  \sin\left(
	\frac{\beta}{2}\right)  \left(  37-286\cos^{2}\left(  \frac{\beta}{2}\right)
	\right)  \right)  ,\\
	& \\
	&  ~\tau_{0,1,2}^{1,0}=\tau_{1,0,2}^{0,1}=\tau_{1,2,0}^{0,1}=\tau
	_{2,1,0}^{1,0}=\\
	= &  ~{\frac{1}{2304\sin^{8}\left(  \frac{\beta}{2}\right)  }}\left(
	{-27{\beta}^{2}+2\left(  890+18{\beta}^{2}\right)  \cos^{2}\left(  \frac
		{\beta}{2}\right)  -180\left(  1+{\beta}^{2}\right)  \cos^{4}\left(
		\frac{\beta}{2}\right)  +48\left(  3{\beta}^{2}-31\right)  \cos^{6}\left(
		\frac{\beta}{2}\right)  }\right.  \\
	&  ~\left.  {-112\cos^{8}\left(  \frac{\beta}{2}\right)  -12\beta\sin\left(
		\frac{\beta}{2}\right)  \cos\left(  \frac{\beta}{2}\right)  \left(
		37-6\cos^{2}\left(  \frac{\beta}{2}\right)  +130\cos^{4}\left(  \frac{\beta
		}{2}\right)  -20\cos^{6}\left(  \frac{\beta}{2}\right)  \right)  }\right)  \\
	& \\
	&  ~\tau_{0,1,2}^{0,1}=\tau_{0,2,1}^{0,1}=\tau_{1,0,2}^{1,0}=\tau
	_{2,0,1}^{1,0}=\\
	= &  ~{\frac{1}{2304\sin^{8}\left(  \frac{\beta}{2}\right)  }}\left(
	640-45{\beta}^{2}-12\left(  143+3{\beta}^{2}\right)  \cos^{2}\left(
	\frac{\beta}{2}\right)  +36\left(  93-7{\beta}^{2}\right)  \cos^{4}\left(
	\frac{\beta}{2}\right)  -16\left(  169-9{\beta}^{2}\right)  \cos^{6}\left(
	\frac{\beta}{2}\right)  \right.  \\
	&  ~{\left.  +432\cos^{8}\left(  \frac{\beta}{2}\right)  +12\beta\sin\left(
		\frac{\beta}{2}\right)  \cos\left(  \frac{\beta}{2}\right)  \left(
		1+14\cos^{2}\left(  \frac{\beta}{2}\right)  -38\cos^{4}\left(  \frac{\beta}%
		{2}\right)  -4\cos^{6}\left(  \frac{\beta}{2}\right)  \right)  \right)  ,}\\
	& \\
	&  ~\tau_{0,2,1}^{1,0}=\tau_{1,2,0}^{1,0}=\tau_{2,0,1}^{0,1}=\tau
	_{2,1,0}^{0,1}=\\
	= &  ~{\frac{1}{2304\sin^{8}\left(  \frac{\beta}{2}\right)  }}\left(
	-27{\beta}^{2}+4\left(  113-27{\beta}^{2}\right)  \cos^{2}\left(  \frac{\beta
	}{2}\right)  +12\left(  13-9{\beta}^{2}\right)  \cos^{4}\left(  \frac{\beta
}{2}\right)  -1200\cos^{6}\left(  \frac{\beta}{2}\right)  +592\cos^{8}\left(
\frac{\beta}{2}\right)  \right.  \\
&  ~{\left.  +12\beta\sin\left(  \frac{\beta}{2}\right)  \cos\left(
	\frac{\beta}{2}\right)  \left(  9+6\cos^{2}\left(  \frac{\beta}{2}\right)
	-14\cos^{4}\left(  \frac{\beta}{2}\right)  +20\cos^{6}\left(  \frac{\beta}%
	{2}\right)  \right)  \right)  ,}\\
& \\
&  ~\tau_{1,1,1}^{0,1}=\tau_{1,1,1}^{1,0}=\\
= &  ~{\frac{1}{1152\sin^{8}\left(  \frac{\beta}{2}\right)  }\left(
	-1408+261{\beta}^{2}+4\left(  1036-117{\beta}^{2}\right)  \cos^{2}\left(
	\frac{\beta}{2}\right)  -12\left(  572-75{\beta}^{2}\right)  \cos^{4}\left(
	\frac{\beta}{2}\right)  +32\left(  149-9{\beta}^{2}\right)  \cos^{6}\left(
	\frac{\beta}{2}\right)  \right.  }\\
&  ~\left.  -640\cos^{8}\left(  \frac{\beta}{2}\right)  +6\beta\sin\left(
\frac{\beta}{2}\right)  \cos\left(  \frac{\beta}{2}\right)  \left(
91-338\cos^{2}\left(  \frac{\beta}{2}\right)  +364\cos^{4}\left(  \frac{\beta
}{2}\right)  -72\cos^{6}\left(  \frac{\beta}{2}\right)  \right)  \right)  ,\\
& \\
&  ~\tau_{3,0,0}^{0,1}=\tau_{0,3,0}^{1,0}=0.
\end{align*}
\end{footnotesize}

\subsubsection{Variations of products of second order partial derivatives}\label{app:trigonometric_g_2}

\begin{footnotesize}%
	\begin{align*}
	& ~\tau_{0,0,3}^{0,2}=\tau_{0,0,3}^{2,0}=\tau_{0,3,0}^{0,2}=\tau_{3,0,0}%
	^{2,0}=\\
	=&~{\frac{1}{576\sin^{8}\left(  \frac{\beta}{2}\right)  }}\left(
	{-336-27{\beta}^{2}-4}\left(  319{-63{\beta}^{2}}\right)  {\cos^{2}\left(
		\frac{\beta}{2}\right)  +12}\left(  181{-21{\beta}^{2}}\right)  {\cos
		^{4}\left(  \frac{\beta}{2}\right)  -864\cos^{6}\left(  \frac{\beta}%
		{2}\right)  }\right.  \\
	& ~\left.  +{304\cos^{8}\left(  \frac{\beta}{2}\right)  +12\beta\sin\left(
		\frac{\beta}{2}\right)  \cos\left(  \frac{\beta}{2}\right)  }\left(
	{1+74\cos^{2}\left(  \frac{\beta}{2}\right)  }\right)  \right)  ,\\
	& \\
	\tau_{0,0,3}^{1,1}   =&~{\frac{1}{576\sin^{8}\left(  \frac{\beta}{2}\right)
		}}\left(  -{144-27{\beta}^{2}-4}\left(  451{-63{\beta}^{2}}\right)  {\cos
		{^{2}}\left(  \frac{\beta}{2}\right)  +84}\left(  23{-3{\beta}^{2}}\right)
	{\cos{^{4}}\left(  \frac{\beta}{2}\right)  +288\cos{^{6}}\left(  \frac{\beta
		}{2}\right)  }\right.  \\
	& ~\left.  -{272\cos{^{8}}\left(  \frac{\beta}{2}\right)  +12\beta\cos\left(
		\frac{\beta}{2}\right)  \sin\left(  \frac{\beta}{2}\right)  }\left(
	{19+62\cos{^{2}}\left(  \frac{\beta}{2}\right)  }\right)  \right)  ,\\
	& \\
	& ~\tau_{0,1,2}^{0,2}=\tau_{0,2,1}^{0,2}=\tau_{1,0,2}^{2,0}=\tau_{2,0,1}%
	^{2,0}=\\
	=&~{\frac{1}{1152\sin^{8}\left(  \frac{\beta}{2}\right)  }}\left(
	{224-9{\beta}^{2}+6}\left(  137{-39{\beta}^{2}}\right)  {\cos{^{2}}\left(
		\frac{\beta}{2}\right)  +6}\left(  43{+6{\beta}^{2}}\right)  {\cos{^{4}%
		}\left(  \frac{\beta}{2}\right)  +8}\left(  80{+9{\beta}^{2}}\right)
	{\cos{^{6}}\left(  \frac{\beta}{2}\right)  }\right.  \\
	& ~\left.  -{1944\cos{^{8}}\left(  \frac{\beta}{2}\right)  +3\beta\sin\left(
		\frac{\beta}{2}\right)  \cos\left(  \frac{\beta}{2}\right)  }\left(
	131{-242\cos{^{2}}\left(  \frac{\beta}{2}\right)  -472\cos{^{4}}\left(
		\frac{\beta}{2}\right)  -80\cos{^{6}}\left(  \frac{\beta}{2}\right)  }\right)
	\right)  ,\\
	& \\
	& ~\tau_{0,1,2}^{1,1}=\tau_{1,0,2}^{1,1}=\\
	=&~{\frac{1}{2304\sin^{8}\left(  \frac{\beta}{2}\right)  }}\left(
	{-224-45{\beta}^{2}+8}\left(  541{-45{\beta}^{2}}\right)  {\cos{^{2}}\left(
		\frac{\beta}{2}\right)  -36}\left(  50{-11{\beta}^{2}}\right)  {\cos{^{4}%
		}\left(  \frac{\beta}{2}\right)  -16}\left(  202{-9{\beta}^{2}}\right)
	{\cos{^{6}}\left(  \frac{\beta}{2}\right)  }\right.  \\
	& ~\left.  {+928\cos{^{8}}\left(  \frac{\beta}{2}\right)  -6\beta\sin\left(
		\frac{\beta}{2}\right)  \cos\left(  \frac{\beta}{2}\right)  }\left(
	193{+14\cos{^{2}}\left(  \frac{\beta}{2}\right)  +236\cos{^{4}}\left(
		\frac{\beta}{2}\right)  +40\cos{^{6}}\left(  \frac{\beta}{2}\right)  }\right)
	\right)  ,\\
	& \\
	& ~\tau_{0,1,2}^{2,0}=\tau_{1,0,2}^{0,2}=\tau_{1,2,0}^{0,2}=\tau_{2,1,0}%
	^{2,0}\\
	=&~{\frac{1}{1152\sin^{8}\left(  \frac{\beta}{2}\right)  }}\left(
	-384-54{\beta}^{2}+2\left(  1357{{-99\beta}^{2}}\right)  {{\cos{^{2}}\left(
			\frac{\beta}{2}\right)  }-6}\left(  115{-48{\beta}^{2}}\right)  {\cos{^{4}%
		}\left(  \frac{\beta}{2}\right)  -24}\left(  88{-3{\beta}^{2}}\right)
	{\cos{^{6}}\left(  \frac{\beta}{2}\right)  }\right.  \\
	& ~\left.  {+472\cos{^{8}}\left(  \frac{\beta}{2}\right)  -3\beta\sin\left(
		\frac{\beta}{2}\right)  \cos\left(  \frac{\beta}{2}\right)  }\left(
	341{-150\cos{^{2}}\left(  \frac{\beta}{2}\right)  +176\cos{^{4}}\left(
		\frac{\beta}{2}\right)  +224\cos{^{6}}\left(  \frac{\beta}{2}\right)
	}\right)  \right)  ,\\
	& \\
	& ~\tau_{0,2,1}^{1,1}=\tau_{2,0,1}^{1,1}=\\
	=&~{\frac{1}{2304\sin^{8}\left(  \frac{\beta}{2}\right)  }}\left(
	{-224-45{\beta}^{2}+}12\left(  131{-15{\beta}^{2}}\right)  {\cos{^{2}}\left(
		\frac{\beta}{2}\right)  -12}\left(  79{+15{\beta}^{2}}\right)  {\cos{^{4}%
		}\left(  \frac{\beta}{2}\right)  +2432\cos{^{6}}\left(  \frac{\beta}%
		{2}\right)  }\right.  \\
	& ~\left.  -{2832\cos{^{8}}\left(  \frac{\beta}{2}\right)  +24\beta\sin\left(
		\frac{\beta}{2}\right)  \cos\left(  \frac{\beta}{2}\right)  }\left(
	49{-53\cos{^{2}}\left(  \frac{\beta}{2}\right)  -43\cos{^{4}}\left(
		\frac{\beta}{2}\right)  -10\cos{^{6}}\left(  \frac{\beta}{2}\right)  }\right)
	\right)  ,\\
	& \\
	& ~\tau_{0,2,1}^{2,0}=\tau_{1,2,0}^{2,0}=\tau_{2,0,1}^{0,2}=\tau_{2,1,0}%
	^{0,2}=\\
	=&~{\frac{1}{576\sin^{8}\left(  \frac{\beta}{2}\right)  }\left(
		-192-27{\beta}^{2}+4\left(  143-27{\beta}^{2}\right)  \cos^{2}\left(
		\frac{\beta}{2}\right)  +12\left(  115-9{\beta}^{2}\right)  \cos^{4}\left(
		\frac{\beta}{2}\right)  -1296\cos^{6}\left(  \frac{\beta}{2}\right)  \right.
	}\\
	& ~{\left.  -464\cos^{8}\left(  \frac{\beta}{2}\right)  +12\beta\sin\left(
		\frac{\beta}{2}\right)  \cos\left(  \frac{\beta}{2}\right)  \left(
		6-38\cos^{4}\left(  \frac{\beta}{2}\right)  -28\cos^{6}\left(  \frac{\beta
		}{2}\right)  \right)  \right)  ,}\\
	& \\
	& ~\tau_{1,1,1}^{0,2}=2\tau_{1,1,1}^{1,1}=\tau_{1,1,1}^{2,0}=\\
	=&~{\frac{1}{576\sin^{8}\left(  \frac{\beta}{2}\right)  }}\left(
	1216+171{\beta}^{2}-16\left(  133-9{\beta}^{2}\right)  \cos^{2}\left(
	\frac{\beta}{2}\right)  -12\left(  556-33{\beta}^{2}\right)  \cos^{4}\left(
	\frac{\beta}{2}\right)  +16\left(  362-9{\beta}^{2}\right)  \cos^{6}\left(
	\frac{\beta}{2}\right)  \right.  \\
	& \left.  +1792\cos^{8}\left(  \frac{\beta}{2}\right)  +6\beta\sin\left(
	\frac{\beta}{2}\right)  \cos\left(  \frac{\beta}{2}\right)  \left(
	77-250\cos^{2}\left(  \frac{\beta}{2}\right)  +476\cos^{4}\left(  \frac{\beta
	}{2}\right)  +264\cos^{6}\left(  \frac{\beta}{2}\right)  \right)  \right)  ,\\
	& \\
	& ~\tau_{1,2,0}^{1,1}=\tau_{2,1,0}^{1,1}=\\
	=&~{\frac{1}{2304\sin^{8}\left(  \frac{\beta}{2}\right)  }}\left(
	{-480-27{\beta}^{2}-}4\left(  41{+27{\beta}^{2}}\right)  {\cos{^{2}}\left(
		\frac{\beta}{2}\right)  +}12\left(  463{-9{\beta}^{2}}\right)  {\cos{^{4}%
		}\left(  \frac{\beta}{2}\right)  -5568\cos{^{6}}\left(  \frac{\beta}%
		{2}\right)  }\right.  \\
	& ~\left.  {+{656\cos{^{8}}\left(  \frac{\beta}{2}\right)  }-24\beta\sin\left(
		\frac{\beta}{2}\right)  \cos\left(  \frac{\beta}{2}\right)  }\left(
	{39-57\cos{^{2}}\left(  \frac{\beta}{2}\right)  +17\cos{^{4}}\left(
		\frac{\beta}{2}\right)  +46\cos{^{6}}\left(  \frac{\beta}{2}\right)
	}\right)  \right)  ,\\
	& \\
	\tau_{0,3,0}^{1,1}   =&~\tau_{0,3,0}^{2,0}=\tau_{3,0,0}^{0,2}=\tau
	_{3,0,0}^{1,1}=0.
	\end{align*}
\end{footnotesize}%

\subsection{Double integrals of constrained trivariate algebraic-trigonometric expressions}
\label{app:algebraic_trigonometric}

Using the constants
\begin{footnotesize}
\begin{align*}
c_{1} &  =\frac{1}{\beta-\sin\left(  \beta\right)  },\\
c_{2} &  =\frac{\sin\left(  \beta\right)  }{\left(  2\sin\left(  \beta\right)
	-\beta-\beta\cos\left(  \beta\right)  \right)  \left(  \beta-\sin\left(
	\beta\right)  \right)  },\\
c_{3} &  =\frac{4\left(  3\beta+4\sin\left(  \beta\right)  -\beta\cos\left(
	\beta\right)  \right)  \cos\left(  \frac{\beta}{2}\right)  }{\left(
	2\sin\left(  \beta\right)  -\beta-\beta\cos\left(  \beta\right)  \right)
	\left(  \beta-\sin\left(  \beta\right)  \right)  },\\
c_{4} &  =\frac{4\sin\left(  \beta\right)  \cos\left(  \frac{\beta}{2}\right)
}{\left(  2\sin\left(  \beta\right)  -\beta-\beta\cos\left(  \beta\right)
\right)  \left(  \beta-\sin\left(  \beta\right)  \right)  },~\beta \in \left(0,2\pi\right),%
\end{align*}
\end{footnotesize}the values listed in Sections \ref{app:algebraic_trigonometric_g_1}--\ref{app:algebraic_trigonometric_g_2} are related to the constrained trivariate algebraic-trigonometric basis functions detailed in Example \ref{exmp:trivariate_algebraic_trigonometric}.

\subsubsection{Variations of products of first order partial derivatives}\label{app:algebraic_trigonometric_g_1}

\begin{footnotesize}
\begin{align*}
&  ~\tau_{0,0,3}^{0,1}=\tau_{0,0,3}^{1,0}=\tau_{0,3,0}^{0,1}=\tau_{3,0,0}^{1,0}=\\
= &  ~\frac{c_{1}}{48}\left(  -3c_{{3}}\left(  \beta\sin\left(  \frac{\beta
}{2}\right)  +2\cos\left(  \frac{\beta}{2}\right)  -{\beta}^{2}\cos\left(
\frac{\beta}{2}\right)  -2\cos^{3}\left(  \frac{\beta}{2}\right)  \right)
\right.  \\
&  ~\left.  +c_{{4}}\left(  6\beta\sin\left(  \frac{\beta}{2}\right)  +{\beta
}^{3}\sin\left(  \frac{\beta}{2}\right)  +12\cos\left(  \frac{\beta}%
{2}\right)  -9{\beta}^{2}\cos\left(  \frac{\beta}{2}\right)  -12\cos
^{3}\left(  \frac{\beta}{2}\right)  +6\beta\sin\left(  \frac{\beta}{2}\right)
\cos^{2}\left(  \frac{\beta}{2}\right)  \right)  \right),
\\
\\
&  ~\tau_{0,1,2}^{0,1}=\tau_{0,2,1}^{0,1}=\tau_{1,0,2}^{1,0}=\tau
_{2,0,1}^{1,0}=\\
= &  ~{\frac{c_{{2}}}{96}}\left(  c_{{3}}\left(  \beta\sin\left(  \frac{\beta}{2}\right)  \left(  45-4{\beta
}^{2}-12\cos^{2}\left(  \frac{\beta}{2}\right)  \right)  +3\cos\left(
\frac{\beta}{2}\right)  \left(  14-9{\beta}^{2}-14\cos^{2}\left(  \frac{\beta
}{2}\right)  \right)  \right)  \right.  \\
&  ~\left.  -c_{{4}}\left(  \beta\sin\left(  \frac{\beta}{2}\right)  \left(  39+18\cos
^{2}\left(  \frac{\beta}{2}\right)  -{\beta}^{2}\right)  +\cos\left(
\frac{\beta}{2}\right)  \left(  78-36{\beta}^{2}-{\beta}^{4}-6\left(  2{\beta
}^{2}+13\right)  \cos^{2}\left(  \frac{\beta}{2}\right)  \right)  \right)  \right),
\\
\\
&  ~\tau_{0,1,2}^{1,0}=\tau_{1,0,2}^{0,1}=\tau_{1,2,0}^{0,1}=\tau
_{2,1,0}^{1,0}=\\
= &  ~\frac{\beta c_{{2}}\sin\left(  \frac{\beta}{2}\right)  }{16}\left(
c_{{3}}\left(  8-{\beta}^{2}-8\cos^{2}\left(  \frac{\beta}{2}\right)
-2\beta\sin\left(  \frac{\beta}{2}\right)  \cos\left(  \frac{\beta}{2}\right)
\right) +\beta c_{{4}}\left(  {\beta}+2{\beta}\cos^{2}\left(  \frac{\beta}%
{2}\right)  -6\sin\left(  \frac{\beta}{2}\right)  \cos\left(
\frac{\beta}{2}\right)  \right)  \right),
\\
\\
&  ~\tau_{0,2,1}^{1,0}=\tau_{1,2,0}^{1,0}=\tau_{2,0,1}^{0,1}=\tau
_{2,1,0}^{0,1}=\\
= &  ~{\frac{c_{{2}}}{96}}\left(  c_{{3}}\left(  \beta\left(  15-2{\beta}%
^{2}-84\cos^{2}\left(  \frac{\beta}{2}\right)  \right)  \sin\left(
\frac{\beta}{2}\right)  \right.   +3\left(  58-7{\beta}^{2}-\left(  58-4{\beta}^{2}\right)  \cos
^{2}\left(  \frac{\beta}{2}\right)  \right)  \cos\left(  \frac{\beta}%
{2}\right)  \right)  \\
&  ~+c_{{4}}\left(  
\beta\left(  15+{\beta}^{2}\right)  
\sin\left(
\frac{\beta}{2}
\right)  -6\beta\left(  25-2{\beta}^{2}\right)  \sin\left(
\frac{\beta}{2}\right)  \cos^{2}\left(  \frac{\beta}{2}\right)      +\left(  30-12{\beta}^{2}-{\beta}^{4}-\left(  30-72{\beta
}^{2}\right)  \cos^{2}\left(  \frac{\beta}{2}\right)  \right)  \cos\left(
\frac{\beta}{2}\right)  \right),  
\\
\\
&~\tau_{0,3,0}^{1,0}=\tau_{3,0,0}^{0,1}=0,
\\
\\
&  ~\tau_{1,1,1}^{0,1}=\tau_{1,1,1}^{1,0}=\\
= &  ~\frac{1}{96}\left(  -3{c_{{3}}^{2}}\left(  4{-\beta}^{2}-4\cos\left(
\beta\right)  -\beta\sin\left(  \beta\right)  \right)  -6c_{{3}}c_{{4}}\left(
2{\beta}^{2}-3\beta\sin\left(  \beta\right)  +{\beta}^{2}\cos\left(
\beta\right)  \right)    +\beta{c_{{4}}^{2}}\left(  {\beta}^{3}-3{\beta}^{2}\sin\left(
\beta\right)  -6{\beta}\cos\left(  \beta\right)  +6\sin\left(  \beta\right)
\right)  \right).
\end{align*}
\end{footnotesize}

\subsubsection{Variations of products of second order partial derivatives}\label{app:algebraic_trigonometric_g_2}

\begin{footnotesize}
\begin{align*}
	&  ~\tau_{0,0,3}^{0,2}=\tau_{0,0,3}^{2,0}=\tau_{0,3,0}^{0,2}=\tau
	_{3,0,0}^{2,0}=\\
	=  &  ~-\frac{c_{{1}}}{48}\left(  3c_{{3}}\left(  3\beta\sin\left(
	\frac{\beta}{2}\right)  -2\cos\left(  \frac{\beta}{2}\right)  -{\beta}^{2}%
	\cos\left(  \frac{\beta}{2}\right)  +2\cos^{3}\left(  \frac{\beta}{2}\right)
	\right)  \right. \\
	&  ~\left.  +c_{{4}}\left(  -\beta\left(  36+{\beta}^{2}\right)  \sin\left(
	\frac{\beta}{2}\right)  +3\left(  8+3{\beta}^{2}-8\cos^{2}\left(  \frac{\beta
	}{2}\right)  +2\beta\sin\left(  \frac{\beta}{2}\right)  \cos\left(
	\frac{\beta}{2}\right)  \right)  \cos\left(  \frac{\beta}{2}\right)  \right)
	\right)  ,
\\
\\
	\tau_{0,0,3}^{1,1}=  &  ~\frac{c_{{1}}}{48}\left(  -3c_{{3}}\left(  \beta
	\sin\left(  \frac{\beta}{2}\right)  +\left(  2-{\beta}^{2}-2\cos^{2}\left(
	\frac{\beta}{2}\right)  \right)  \cos\left(  \frac{\beta}{2}\right)  \right)
	\right. \\
	&  ~\left.  +c_{{4}}\left(  \beta\left(  6+{\beta}^{2}\right)  \sin\left(
	\frac{\beta}{2}\right)  +3\left(  4-3{\beta}^{2}-4\cos^{2}\left(  \frac{\beta
	}{2}\right)  +2\beta\sin\left(  \frac{\beta}{2}\right)  \cos\left(
	\frac{\beta}{2}\right)  \right)  \cos\left(  \frac{\beta}{2}\right)  \right)
	\right)  ,
\\
\\
	&  ~\tau_{0,1,2}^{0,2}=\tau_{0,2,1}^{0,2}=\tau_{1,0,2}^{2,0}=\tau
	_{2,0,1}^{2,0}=\\
	=  &  ~{\frac{c_{{2}}}{96}}\left(  c_{{3}}\left(  \beta\left(  51-4{\beta}%
	^{2}+12\cos^{2}\left(  \frac{\beta}{2}\right)  \right)  \sin\left(
	\frac{\beta}{2}\right)  -7\left(  6+3{\beta}^{2}-6\cos^{2}\left(  \frac{\beta
	}{2}\right)  \right)  \cos\left(  \frac{\beta}{2}\right)  \right)  \right. \\
	&  ~\left.  -c_{{4}}\left(  3\beta\left(  69-5{\beta}^{2}+2\cos^{2}\left(
	\frac{\beta}{2}\right)  \right)  \sin\left(  \frac{\beta}{2}\right)  -\left(
	162+78{\beta}^{2}+{\beta}^{4}-6\left(  27+2{\beta}^{2}\right)  \cos^{2}\left(
	\frac{\beta}{2}\right)  \right)  \cos\left(  \frac{\beta}{2}\right)  \right)
	\right)  ,
\\
\\
	&  ~\tau_{0,1,2}^{1,1}=\tau_{1,0,2}^{1,1}=\\
	=  &  ~\frac{c_{{2}}}{96}\left(  c_{{3}}\left(  \beta\left(  15-4{\beta}%
	^{2}\right)  \sin\left(  \frac{\beta}{2}\right)  -3\left(  6-{\beta}%
	^{2}+2\left(  2\beta\sin\left(  \frac{\beta}{2}\right)  -3\cos\left(
	\frac{\beta}{2}\right)  \right)  \cos\left(  \frac{\beta}{2}\right)  \right)
	\cos\left(  \frac{\beta}{2}\right)  \right)  \right. \\
	&  ~\left.  -c_{{4}}\left(  \beta\left(  63-19{\beta}^{2}\right)  \sin\left(
	\frac{\beta}{2}\right)  -\left(  66-18{\beta}^{2}+{\beta}^{4}+6\left(
	7\beta\sin\left(  \frac{\beta}{2}\right)  -\left(  11-2{\beta}^{2}\right)
	\cos\left(  \frac{\beta}{2}\right)  \right)  \cos\left(  \frac{\beta}%
	{2}\right)  \right)  \cos\left(  \frac{\beta}{2}\right)  \right)  \right)  ,
\\
\\
	&  ~\tau_{0,1,2}^{2,0}=\tau_{1,0,2}^{0,2}=\tau_{1,2,0}^{0,2}=\tau
	_{2,1,0}^{2,0}=\\
	=  &  ~\frac{c_{2}}{16}\left(  c_{{3}}\left(  -{\beta}^{3}\sin\left(
	\frac{\beta}{2}\right)  -\left(  16-4{\beta}^{2}-2\left(  2\beta\sin\left(
	\frac{\beta}{2}\right)  +\left(  8-{\beta}^{2}\right)  \cos\left(  \frac
	{\beta}{2}\right)  \right)  \cos\left(  \frac{\beta}{2}\right)  \right)
	\cos\left(  \frac{\beta}{2}\right)  \right)  \right. \\
	&  ~\left.  +c_{{4}}\left(  5{\beta}^{3}\sin\left(  \frac{\beta}{2}\right)
	+2\beta\left(  -4{\beta}+\left(  \left(  6-{\beta}^{2}\right)  \sin\left(
	\frac{\beta}{2}\right)  +{\beta}\cos\left(  \frac{\beta}{2}\right)  \right)
	\cos\left(  \frac{\beta}{2}\right)  \right)  \cos\left(  \frac{\beta}%
	{2}\right)  \right)  \right)  ,
\\
\\
	&  ~\tau_{0,2,1}^{1,1}=\tau_{2,0,1}^{1,1}=\\
	= &  ~\frac{c_{2}}{48}\left(  3c_{{3}}\left(  \beta\sin\left(  \frac{\beta}%
	{2}\right)  +\left(  2-{\beta}^{2}-2\cos^{2}\left(  \frac{\beta}{2}\right)
	\right)  \cos\left(  \frac{\beta}{2}\right)  \right)  \right.  \\
	&  ~\left.  -c_{{4}}\left(  \beta\left(  6+{\beta}^{2}\right)  \sin\left(
	\frac{\beta}{2}\right)  +3\left(  4-3{\beta}^{2}+2\left(  \beta\sin\left(
	\frac{\beta}{2}\right)  -2\cos\left(  \frac{\beta}{2}\right)  \right)
	\cos\left(  \frac{\beta}{2}\right)  \right)  \cos\left(  \frac{\beta}%
	{2}\right)  \right)  \right)  ,
\\
\\
	&  ~\tau_{0,2,1}^{2,0}=\tau_{1,2,0}^{2,0}=\tau_{2,0,1}^{0,2}=\tau
	_{2,1,0}^{0,2}=\\
	= &  ~{\frac{c_{{2}}}{96}}\left(  -c_{{3}}\left(  \beta\left(  15+2{\beta}%
	^{2}\right)  \sin\left(  \frac{\beta}{2}\right)  -3\left(  38-{\beta}%
	^{2}-2\left(  2\beta\sin\left(  \frac{\beta}{2}\right)  +\left(  19+2{\beta
	}^{2}\right)  \cos\left(  \frac{\beta}{2}\right)  \right)  \cos\left(
	\frac{\beta}{2}\right)  \right)  \cos\left(  \frac{\beta}{2}\right)  \right)
	\right.  \\
	&  ~\left.  +c_{{4}}\left(  \beta\left(  63+11{\beta}^{2}\right)  \sin\left(
	\frac{\beta}{2}\right)  -\left(  66+6{\beta}^{2}+{\beta}^{4}+6\left(
	\beta\left(  19+2{\beta}^{2}\right)  \sin\left(  \frac{\beta}{2}\right)
	-\left(  11+8{\beta}^{2}\right)  \cos\left(  \frac{\beta}{2}\right)  \right)
	\cos\left(  \frac{\beta}{2}\right)  \right)  \cos\left(  \frac{\beta}%
	{2}\right)  \right)  \right)  ,
\\
\\
	&  ~\tau_{1,1,1}^{0,2}=2\tau_{1,1,1}^{1,1}=\tau_{1,1,1}^{2,0}=\\
	= &  ~{\frac{\beta}{96}}\left(  3{c_{{3}}^{2}}\left(  \beta-\sin\left(
	\beta\right)  \right)  -6c_{{3}}c_{{4}}\left(  4\beta-3\sin\left(
	\beta\right)  -\beta\cos\left(  \beta\right)  \right)  +{c_{{4}}^{2}}\left(  48\beta+{\beta}^{3}-30\sin\left(
	\beta\right)  +3{\beta}^{2}\sin\left(  \beta\right)  -18\beta\cos\left(
	\beta\right)  \right)  \right)  ,
\\
\\
	& ~\tau_{1,2,0}^{1,1}=\tau_{2,1,0}^{1,1}=\\
	=  & ~{\frac{c_{{2}}}{96}}\left(  -c_{{3}}\left(  \beta\left(  15+2{\beta}%
	^{2}\right)  \sin\left(  \frac{\beta}{2}\right)  -\left(  18+9{\beta}%
	^{2}+6\left(  2\beta\sin\left(  \frac{\beta}{2}\right)  -\left(  2{\beta}%
	^{2}+3\right)  \cos\left(  \frac{\beta}{2}\right)  \right)  \cos\left(
	\frac{\beta}{2}\right)  \right)  \cos\left(  \frac{\beta}{2}\right)  \right)
	\right.  \\
	& ~\left.  +c_{{4}}\left(  \beta\left(  63+11{\beta}^{2}\right)  \sin\left(
	\frac{\beta}{2}\right)  -\left(  66+18{\beta}^{2}+{\beta}^{4}+\left(
	6\beta\left(  7+2{\beta}^{2}\right)  \sin\left(  \frac{\beta}{2}\right)
	-6\left(  11+4{\beta}^{2}\right)  \cos\left(  \frac{\beta}{2}\right)  \right)
	\cos\left(  \frac{\beta}{2}\right)  \right)  \cos\left(  \frac{\beta}%
	{2}\right)  \right)  \right)  ,
\\
\\
	& ~\tau_{0,3,0}^{1,1}=\tau_{0,3,0}^{2,0}=\tau_{3,0,0}^{0,2}=\tau_{3,0,0}%
	^{1,1}=0.
\end{align*}

\end{footnotesize}

\end{document}